\theoremstyle{plain}
\newtheorem{corollary}{Corollary}
\newtheorem{definition}{Definition}
\newtheorem{lemma}{Lemma}
\newtheorem{notation}{Notation}
\newtheorem{proposition}{Proposition}
\newtheorem{remark}{Remark}
\newtheorem{theorem}{Theorem}
\numberwithin{equation}{section}
\begin{document}
\title[Some Remarks on Spectrum of Nonlinear Continuous Operators]{Some
Remarks on Spectrum of Nonlinear Continuous Operators}
\author{Kamal N. Soltanov}
\address{{\small National Academy of Sciences of Azerbaijan, Baku, AZERBAIJAN%
}}
\email{sultan\_kamal@hotmail.com}
\urladdr{https://www.researchgate.net/profile/Kamal-Soltanov/research}
\date{}
\subjclass[2010]{Primary 47J10, 47H10; Secondary 35P30, 35A01}
\keywords{Nonlinear continuous operator, spectrum, Banach spaces, nonlinear
differential operator, solvability}

\begin{abstract}
In this article, the existence of the spectrum (the eigenvalues) for the
nonlinear continuous operators acting in the Banach spaces is investigated.
For the study, this question is used a different approach that allows the
studying of all eigenvalues of the nonlinear operator relative to another
nonlinear operator. Here shows that in nonlinear operators case is necessary
to seek the spectrum of the given nonlinear operator relative to another
nonlinear operator satisfying certain conditions. The different examples,
for which eigenvalues can find are provided. Moreover, the nonlinear
problems including parameters are studied.
\end{abstract}

\maketitle

\section{\protect\bigskip Introduction}

Well-known that the spectral theory for linear operators is one of the most
important topics of linear functional analysis, as in many cases for the
study of the linear operator, it is enough to study its spectrum. It should
be noted the spectral theory of linear operators has essential application
in the many topics of the natural sciences (moreover, the spectral theory is
one of the areas that play a fundamental role in quantum mechanics). As, the
many processes of physics, mechanics, biology, etc. at the mathematical
describing, usually would be nonlinear problems, consequently, the operators
generated by these are nonlinear operators acting in Banach spaces.

It needs to be noted in the literature there exist sufficiently many works,
to the finding the first eigenvalue of the nonlinear continuous operators
devoted. In these works, various definitions of the spectra for the specific
classes of nonlinear continuous operators were introduced. In many works,
the nonlinear equations of type $f\left( x\right) -\lambda g\left( x\right)
=0$ in the appropriate spaces were considered, where $f$ is the basic
operator, $g$ is the compact operator, and an infimum of the parameter $%
\lambda $ was found, at which, in the main, the existence of a solution of
the considered equation studied (see, e.g. \cite{3, 5, 6, 9, 10, 11, 24, 36}%
, etc.). In works \cite{6, 7, 8, 9, 10, 11, 16, 23, 26} the Strum-Liouville
type problem for the perturbed by nonlinear operators of linear operators
was investigated. More exactly, the operator in form $F\left( \lambda
,x\right) =\lambda Lx+g\left( \lambda ,x\right) $ was considered, and the
bifurcation of solutions to the examined problems was studied.

Well-known that the founding of eigenvalues of linear operators allows for
studying the bifurcations of solutions, which appear under the investigation
of the semilinear equations. It should be noted that there are works, where
the existence of the first eigenvalue for certain nonlinear smooth operators
was studied, and using it the bifurcations of solutions to nonlinear
equations with such operators were investigated (see, e.g., \cite{12, 16, 33}%
, etc. \footnote{%
see, also Lopez-Gomez, J. \textit{Spectral Theory and Nonlinear Functional
Analysis} (1st ed.). (2001) Chapman and Hall/CRC, \textit{T\&F Books}
https://doi.org/10.1201/9781420035506\textit{\ }
\par
{}}).

In works \cite{1, 2, 4, 13, 14, 15, 18, 19, 20, 21, 22, 25, 27, 28} have
been introduced the definition of the spectra (more exactly, first
eigenvalue) using the equation in the form $f\left( x\right) -\lambda
_{1}Ix=0$, as in the linear operators theory. And also have been introduced
the definition of the spectra for the classes operators for the Frechet
differentiable operators, operators that satisfy the Lipschitz condition,
operators that are a special class of continuous operators, and operators
that are linearly bounded were defined. These approaches supposed that the
spectrum of the operator $f$ acting in the Banach space $X$ can be defined,
as in the theory of linear operators. These approaches and obtained results
in enough form were explained in the book \cite{1} (see, also the survey 
\cite{2}). In the above works, the study used degree theory that requires
the condition compactness, this condition in what follows was generalized
and used the Kuratowski measure of noncompactness. The definitions of the
spectrum introduced in these works, couldn't satisfy the next requirements
since a found parameter $\lambda $ will be a function of elements of the
domain(see, provided examples below).

It will be best if one can introduce such a definition of the spectrum of
the continuous nonlinear operator that satisfies some basic requirements,
which were analogous to properties of the spectrum the existing in the
linear operator's theory then one could be to seek and also other spectrums
(i.e. eigenvalues and eigenvectors). So, from the explanation below will be
seen that in order for the spectrum can to characterize the nonlinear
operator, needs to approach another way to the definition of the spectrum of
the nonlinear operator.

This paper is proposed a new approach for the study of the spectrum of
continuous nonlinear operators in the Banach spaces. In reality, here we
find the first eigenvalue of the nonlinear continuous operator in Banach
space, in addition, this approach shows how one can seek the other
eigenvalues. Here shows that if to use the proposed definition of the
spectrum of nonlinear continuous operators in Banach spaces, then the
spectra will satisfy the certain properties that are similar to properties,
having in the linear operator theory. Moreover, here we investigate also the
solvability of nonlinear equations in the Banach spaces.

In this paper, we will study the spectrum of nonlinear operators acting in
Banach spaces, and also the solvability of the nonlinear equations,
dependent on parameters, using the general solvability theorems and
fixed-point theorems of the works \cite{29, 30, 31, 32}\footnote{%
see, also Kamal N. Soltanov, \ Nonlinear Operators, Fixed-Point Theorems,
Nonlinear Equations, Springer, V.V. Mityushev, M.V. Ruzhansky (eds.),
Current Trends in Analysis and Its Applications, Trends in Mathematics
(2015), 347-354, DOI 10.1007/978-3-319-12577-0\_41}.

Let $X,Y$ be real Banach spaces on the field $\Re $ and $X^{\ast },Y^{\ast }$
be of their dual spaces, let $Y$ be reflexive space. Let $f:X$ $%
\longrightarrow Y$ and $g:X$ $\longrightarrow Y$ be nonlinear continuous
operators such, that $f\left( 0\right) =0$, $g\left( 0\right) =0$. For
investigation of the spectrum of the continuous nonlinear operators, we will
consider the following equation 
\begin{equation}
f\left( x\right) =\lambda g\left( x\right) ,\quad x\in M\subseteq X,
\label{P}
\end{equation}%
where $f,\ g$ are continuous operators, in particular, $g$ can be the
identical operator, in addition, we study the solvability of the equation,
dependent on a parameter $\lambda $

\begin{equation}
f_{\lambda }\left( x\right) \equiv f\left( x\right) -\lambda g\left(
x\right) =y,\quad for\ y\in Y,  \label{Pa}
\end{equation}%
where $\lambda $ is an element of $%
%TCIMACRO{\U{2102} }%
%BeginExpansion
\mathbb{C}
%EndExpansion
$, generally.

The paper is organized as follows. In section 2, the definition for the
spectrum of the nonlinear continuous operators in the Banach spaces is
provided, and also some complementaries to the definition, and examples are
provided. Moreover, the general solvability theorems and the fixed-point
theorems are proved. Here how can be to find the first eigenvalue of
nonlinear continuous operators relative to another nonlinear continuous
operator is shown. Section 3 some examples of nonlinear differential
operators, for which found first eigenvalues relative to other nonlinear
differential operators provided, and the relations between found first
eigenvalues with first eigenvalues of linear differential operators showed.
Section 4 studied the existence of the first eigenvalues of the fully
nonlinear continuous operator relative to other nonlinear continuous
operators and provided examples.

\section{Spectral properties of nonlinear continuous operators}

We will consider the nonlinear continuous operators, acting in the Banach
spaces, and will introduce the concept for the spectrum of the nonlinear
continuous operator relative to another nonlinear continuous operator.

Let$\ X$ and $Y$ be the real Banach spaces, $F:D\left( f\right)
=X\longrightarrow Y$, $G:X\subseteq D\left( G\right) \longrightarrow Y$ be
nonlinear bounded continuous operators (for generality) and $\lambda \in 
%TCIMACRO{\U{2102} }%
%BeginExpansion
\mathbb{C}
%EndExpansion
$ be the number, and $F\left( 0\right) =0$, $G\left( 0\right) =0$.

So, we will investigate the spectrum of operator $F$ relative to operator $G$%
, for this, in the beginning, we will study the solvability of the following
equation with a parameter $\lambda $ \ 
\begin{equation}
f_{\lambda }\left( x\right) \equiv F\left( x\right) -\lambda G\left(
x\right) =0,\ \text{or }F\left( x\right) =\lambda G\left( x\right) ,\ x\in X.
\label{2.1}
\end{equation}%
And also we will study the following equation 
\begin{equation}
f_{\lambda }\left( x\right) \equiv F\left( x\right) -\lambda G\left(
x\right) =y,\quad y\in Y.  \label{2.1a}
\end{equation}

We will introduce concepts that be necessary for this paper.

\begin{definition}
\label{D_1}The operator $f:D\left( f\right) \subseteq X\longrightarrow Y$ is
called bounded if there is a continuous function $\mu
:R_{+}^{1}\longrightarrow R_{+}^{1}$ such that 
\begin{equation*}
\left\Vert f\left( x\right) \right\Vert _{Y}\leq \mu \left( \left\Vert
x\right\Vert _{X}\right) ,\quad \forall x\in D\left( f\right) ,
\end{equation*}%
and denote the class of such operators as $\mathfrak{B}$ and the bounded
continuous class of operators by $\mathfrak{B}C^{0}$.
\end{definition}

Introduce the order of relationships in the class of the bounded continuous
operators acting in the Banach spaces.

\begin{definition}
\label{D_2} Let $X_{0},Y_{0}$ be Banach spaces, and $F:D\left( f\right)
\subseteq X_{0}$ $\longrightarrow Y_{0}$ , $G:D\left( G\right) \subseteq
X_{0}\longrightarrow Y_{0}$ be continuous operators. Denote by $\mathcal{F}%
_{F}\left( Z\right) $, $\mathcal{F}_{G}\left( Z\right) $ sets defined in the
form 
\begin{equation*}
\mathcal{F}_{F}\left( Z\right) \equiv \left\{ x\in X_{0}\left\vert
~\left\Vert F\left( x\right) \right\Vert _{Z}\right. <\infty \right\} \neq
\varnothing \ \ \&
\end{equation*}%
\begin{equation*}
\mathcal{F}_{G}\left( Z\right) \equiv \left\{ x\in X_{0}\left\vert
~\left\Vert G\left( x\right) \right\Vert _{Z}\right. <\infty \right\} \neq
\varnothing ,
\end{equation*}%
which are subsets of $X_{0}$ for each Banach space $Z\subseteq Y_{0}$\
satisfying conditions $\Re \left( F\right) \cap Z\neq \varnothing $, $\Re
\left( G\right) \cap Z\neq \varnothing $. If the inclusion $\mathcal{F}%
_{F}\left( Z\right) \subset \mathcal{F}_{G}\left( Z\right) $ holds for each
Banach space $Z\subseteq Y_{0}$, then we will say that operator $F$ is
greater than operator $G$, and denote it as $F\succ G$.
\end{definition}

\begin{definition}
\label{D_S} Let $F:X\longrightarrow Y$ and $G:X\subseteq D\left( G\right)
\longrightarrow Y$, moreover, $F\succ G$. An element $\lambda \in 
%TCIMACRO{\U{2102} }%
%BeginExpansion
\mathbb{C}
%EndExpansion
$ is called a regular and belongs to the $G$-resolvent's of the operator $F$
iff $f_{\lambda }^{-1}\equiv \left( F-\lambda G\ \right) ^{-1}:F\left(
X\right) \cap G\left( X\right) \subseteq Y\longrightarrow X$ exists and $%
f_{\lambda }^{-1}\equiv \left( F-\lambda G\ \right) ^{-1}\in \mathfrak{B}%
C^{0}$, this subset of $%
%TCIMACRO{\U{2102} }%
%BeginExpansion
\mathbb{C}
%EndExpansion
$ denoted by $\rho _{G}\left( F\right) \subseteq 
%TCIMACRO{\U{2102} }%
%BeginExpansion
\mathbb{C}
%EndExpansion
$, (i.e. $\lambda \in \rho _{G}\left( F\right) $), where $f_{\lambda }\left(
\cdot \right) \equiv F\left( \cdot \right) -\lambda G\left( \cdot \right) $.

Consequently, an element $\lambda \in 
%TCIMACRO{\U{2102} }%
%BeginExpansion
\mathbb{C}
%EndExpansion
$ is called a spectrum iff it belongs to the $%
%TCIMACRO{\U{2102} }%
%BeginExpansion
\mathbb{C}
%EndExpansion
-$ $\rho _{G}\left( F\right) $, which is defined as the $G-$spectrum of the
operator $F$, and denoted by $\sigma _{G}\left( F\right) $ (i.e. $\lambda
\in \sigma _{G}\left( F\right) \equiv 
%TCIMACRO{\U{2102} }%
%BeginExpansion
\mathbb{C}
%EndExpansion
-$ $\rho _{G}\left( F\right) $).
\end{definition}

\begin{notation}
\label{N_2}The above definition of the spectrum does not suitable for all
pairs of operators, which are chosen in the independent way, which will be
shown below. We will call $\lambda $ the first eigenvalue of the examined
operator relative to another operator as in Definition \ref{D_S}, which is
independent of elements from the domain of the examined operators.

Then the definition will allow seeking, in the above sense, also the
following eigenvalues of the examined operator relative to another operator.
\end{notation}

So, for simplicity, we start to consider the case, when $F\succ G$ and when
one of these operators has the inverse operator from the class $\mathfrak{B}%
C^{0}$. If we assume that operator $F$ is invertible, as operator $%
F^{-1}:F\left( X\right) \subseteq Y\longrightarrow X$ then, using $F^{-1},$
we get the equation 
\begin{equation}
y-\lambda G\left( F^{-1}\left( y\right) \right) =0,\quad y=F\left( x\right)
,\ x\in X,  \label{2.2}
\end{equation}%
that needs to study on the subset $F\left( X\right) \subseteq Y$. Thus, we
will derive an equation that is equivalent to the examined equation, for
which the existence of the first eigenvalue in many works was investigated,
in their sense (see, \cite{1, 2, 3, 4, 6, 9, 13, 18, 19, 20, 22, 23, 25, 28}
and their references). Unlike a usual case, here the operator $G\circ F^{-1}$
is defined on the subset $F\left( X\right) $ and acts as $G\circ
F^{-1}:F\left( X\right) \longrightarrow G\left( X\right) \subseteq Y$. If
the operator $G$ is invertible then in the same way as above, we get to the
equation 
\begin{equation*}
F\left( G^{-1}\left( y\right) \right) -\lambda y=0,\quad y=G\left( x\right)
,\ x\in X,
\end{equation*}%
where $G^{-1}$ is the inverse operator to $G$. Consequently, in this case,
the obtained equation will need to investigate on the subset $G\left(
X\right) $ of $Y$.

Thus, if assume the operator $F$ (or $G$) is invertible then, we obtain the
equation

\begin{equation}
\widetilde{f}_{\lambda }\left( y\right) \equiv \lambda ^{-1}Iy-G\left(
F^{-1}\left( y\right) \right) =0,\quad \widetilde{f}_{\lambda }:D\left( 
\widetilde{f}_{\lambda }\right) \subseteq Y\longrightarrow Y  \label{2.3}
\end{equation}%
consequently, the finding of a first eigenvalue of the operator $F$ relative
to operator $G$ is transformed into the finding of a first eigenvalue of the
operator $G\circ F^{-1}$ (or $F\circ G^{-1}$).

It is clear if assume the operator $F$ is the linear continuous operator
having the inverse operator $F^{-1}$, then the equation (\ref{2.2}) is
equivalent to the equation 
\begin{equation}
\lambda ^{-1}x-F^{-1}\circ G\left( x\right) =0,  \label{2.4}
\end{equation}%
consequently, the finding of a first eigenvalue of the operator $F$ relative
to operator $G$ is transformed into the finding of a first eigenvalue of the
operator $F^{-1}\circ G$ . Problems of such types were studied in many
articles (\cite{9, 10, 11, 17, 26, 35}, etc.). We would like to study the
problem ((\ref{2.1}) in the general case. Section 3 will be given
explanations relative to the previous cases.

Before starting the investigation of the spectrum of the nonlinear operator
relative to other nonlinear operators in the general case, it is necessary
to investigate the solvability of the nonlinear equation (\ref{2.1a}). We
will use the general existence and fixed-point theorems of articles \cite%
{29, 30} to investigate the main equations. In the beginning, we will lead
the mentioned results from these articles.

\subsection{General Solvability Results}

Let $X,Y$ be real Banach spaces such as above, $f:D\left( f\right) \subseteq
X\longrightarrow Y$ be an operator, and $B_{r_{0}}^{X}\left( 0\right) $ $%
\subseteq D\left( f\right) $ is the closed ball with a center of $0\in X$.

Consider the following conditions.

(\textit{i}) $f:D\left( f\right) \subseteq X\longrightarrow Y$ is the
nonlinear bounded continuous operator;

(\textit{ii}) There is a such mapping $g:X\subseteq D\left( g\right)
\longrightarrow Y^{\ast }$ that$\ g\left( B_{r_{0}}^{X}\left( 0\right)
\right) =B_{r_{1}}^{Y^{\ast }}\left( 0\right) $ and 
\begin{equation*}
\left\langle f\left( x\right) ,\widetilde{g}\left( x\right) \right\rangle
\geq \nu \left( \left\Vert x\right\Vert _{X}\right) =\nu \left( r\right)
,\quad \forall x\in S_{r}^{X}\left( 0\right)
\end{equation*}%
holds \footnote{%
In particular, the mapping $g$ can be a linear bounded operator as $g\equiv
L:X\longrightarrow Y^{\ast }$ that satisfy the conditions of \textit{(ii). }}%
, where $\widetilde{g}\left( x\right) \equiv \frac{g\left( x\right) }{%
\left\Vert g\left( x\right) \right\Vert }$, $\nu :R_{+}^{1}\longrightarrow
R^{1}$ is the continuous function ($\nu \in C^{0}$), moreover, $\nu \left(
\tau \right) $ is the nondecreasing function for $\tau :$ $\tau _{0}\leq
\tau \leq r_{0}$, and $\nu \left( r_{0}\right) \geq \delta _{0}$; $\delta
_{0}>0$, $\tau _{0}\geq 0$ are constants.

(\textit{iii}) Almost each $x_{0}\in int\ B_{r_{0}}^{X}\left( 0\right) $
possesses such neighborhood $V_{\varepsilon }\left( x_{0}\right) $, $%
\varepsilon \geq \varepsilon _{0}>0$ that the following inequality 
\begin{equation}
\left\Vert f\left( x_{2}\right) -f\left( x_{1}\right) \right\Vert _{Y}\geq
\Phi \left( \left\Vert x_{2}-x_{1}\right\Vert _{X},x_{0},\varepsilon \right)
\label{2.6}
\end{equation}%
holds for any $\forall x_{1},x_{2}\in V_{\varepsilon }\left( x_{0}\right)
\cap B_{r_{0}}^{X}\left( 0\right) $, where $\varepsilon _{0}>0$\ and $\Phi
\left( \tau ,x_{0},\varepsilon \right) \geq 0$ is the continuous function of 
$\tau $ and $\Phi \left( \tau ,\widetilde{x},\varepsilon \right)
=0\Leftrightarrow \tau =0$ (in particular, maybe $x_{0}=0$, $\varepsilon
=\varepsilon _{0}=r_{0}$ and $V_{\varepsilon }\left( x_{0}\right)
=V_{r_{0}}\left( 0\right) \equiv B_{r_{0}}^{X}\left( 0\right) $, consequently%
$\ \Phi \left( \tau ,x_{0},\varepsilon \right) \equiv \Phi \left( \tau
,x_{0},r_{0}\right) $ on $B_{r_{0}}^{X}\left( 0\right) $).

\begin{theorem}
\label{Th_1} Let $X,Y$ be real Banach spaces such as above, $f:D\left(
f\right) \subseteq X\longrightarrow Y$ be an operator, and $%
B_{r_{0}}^{X}\left( 0\right) $ $\subseteq D\left( f\right) $ is the closed
ball centered $0\in D\left( f\right) $. Assume conditions (i) and (ii) are
fulfilled. Then the image $f\left( B_{r_{0}}^{X}\left( 0\right) \right) $ of
the ball $B_{r_{0}}^{X}\left( 0\right) $ is contained in an absorbing subset 
$Y$ and contains an everywhere dense subset of $M$, which is defined as
follows 
\begin{equation*}
M\equiv \left\{ y\in Y\left\vert \ \left\langle y,\widetilde{g}\left(
x\right) \right\rangle \leq \left\langle f\left( x\right) ,\widetilde{g}%
\left( x\right) \right\rangle ,\right. \forall x\in S_{r_{0}}^{X}\left(
0\right) \right\} .
\end{equation*}

Furthermore, if the condition (iii)\ also is fulfilled then the image $%
f\left( B_{r_{0}}^{X}\left( 0\right) \right) $ of the ball $%
B_{r_{0}}^{X}\left( 0\right) $ is a bodily subset of $Y$, moreover $%
B_{\delta _{0}}^{Y}\left( 0\right) \subseteq M$.
\end{theorem}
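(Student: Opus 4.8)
The plan is to treat separately the two elementary containments and the substantive solvability assertion, and to obtain the latter by a Galerkin-type finite-dimensional reduction governed by an acute-angle boundary condition.

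First I would record the parts that are immediate from the hypotheses. By (i) the continuous function $\mu$ is bounded on the compact interval $[0,r_0]$, so $f(B_{r_0}^X(0)) \subseteq B_R^Y(0)$ with $R = \max_{0\le s\le r_0}\mu(s)$, and the ball $B_R^Y(0)$ is an absorbing subset of $Y$. The inclusion $B_{\delta_0}^Y(0) \subseteq M$ is equally direct: since $\Vert \widetilde g(x)\Vert_{Y^*} = 1$, every $y$ with $\Vert y\Vert_Y \le \delta_0$ satisfies $\langle y, \widetilde g(x)\rangle \le \Vert y\Vert_Y \le \delta_0 \le \nu(r_0) \le \langle f(x), \widetilde g(x)\rangle$ for all $x \in S_{r_0}^X(0)$ by (ii), which is precisely the defining inequality of $M$. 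Finally I would isolate the observation that drives everything else: for $y \in M$ the definition of $M$ rearranges to the acute-angle inequality $\langle f(x) - y, \widetilde g(x)\rangle \ge 0$ on the sphere $S_{r_0}^X(0)$, made quantitative through $\nu(r_0) \ge \delta_0 > 0$ by (ii).

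Next I would prove that $f(B_{r_0}^X(0))$ meets $M$ in a dense subset. Fixing $y \in M$, I would set up a Galerkin scheme: choose an increasing family of finite-dimensional subspaces of $X$ together with matching finite-dimensional test spaces in $Y^{\ast}$, the two being linked through $g$ so that the coordinate direction on the approximating sphere corresponds to $\widetilde g(\cdot)$ --- this alignment being exactly what the surjectivity $g(B_{r_0}^X(0)) = B_{r_1}^{Y^{\ast}}(0)$ provides. Testing $f(x) - y$ against the $n$-th test space produces a continuous vector field on a finite-dimensional closed ball of radius $r_0$ whose pairing with the outward coordinate direction is, by the acute-angle inequality of the first paragraph, nonnegative on the boundary sphere; the classical consequence of Brouwer's fixed-point theorem then yields a Galerkin solution $x_n$ with $\Vert x_n\Vert_X \le r_0$ and $\langle f(x_n) - y, \phi\rangle = 0$ for every test functional $\phi$ at level $n$. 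By (i) the images $f(x_n)$ remain in $B_R^Y(0)$, so I would pass to a limit along a subsequence, using the reflexivity of $Y$ and the continuity of $f$, to recover solvability of $f(x) = y$ for $y$ ranging over a dense subset of $M$.

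Under the additional hypothesis (iii) I would upgrade this density to the assertion that $f(B_{r_0}^X(0))$ is a bodily subset of $Y$. The lower bound $\Vert f(x_2) - f(x_1)\Vert_Y \ge \Phi(\Vert x_2 - x_1\Vert_X, x_0, \varepsilon)$, with $\Phi$ vanishing only at $0$, forces any sequence along which $f(x_n)$ is Cauchy to be itself Cauchy in $X$; hence $f$ is locally injective and maps each neighborhood of an interior point of $B_{r_0}^X(0)$ onto a set containing a $Y$-neighborhood of the corresponding image, in the spirit of invariance of domain. Thus the interior of $f(B_{r_0}^X(0))$ is nonempty, and, combined with the density just obtained and with $B_{\delta_0}^Y(0) \subseteq M \subseteq \overline{f(B_{r_0}^X(0))}$, this shows that the image is bodily and fills out the interior of $M$. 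I expect the main obstacle to be the passage to the limit in the Galerkin scheme: since $f$ is only bounded and continuous and no compactness (nor reflexivity of $X$) is assumed, the approximants $x_n$ need not converge, so one cannot in general achieve $f(x_n) \to y$ strongly for every $y \in M$, which is exactly why the conclusion under (i), (ii) asserts only that a dense subset of $M$ is attained; condition (iii) removes this difficulty locally, since through the modulus $\Phi$ it converts convergence of the images into strong convergence of the approximants, supplying both the genuine solutions and the local openness that turn density into an honest interior. By comparison, the verification of the acute-angle inequality and the finite-dimensional existence step are routine; the genuinely delicate points are the $g$-adapted choice of Galerkin subspaces and the invariance-of-domain argument under (iii).
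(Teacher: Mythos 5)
The paper itself offers no proof of Theorem \ref{Th_1}: immediately after the statement it defers to \cite{29} (see also \cite{30, 31, 32}), so your argument has to stand on its own, and as written it does not. The elementary parts are fine: $f\left( B_{r_{0}}^{X}\left( 0\right) \right) \subseteq B_{R}^{Y}\left( 0\right) $ with $R=\max_{\left[ 0,r_{0}\right] }\mu $, any ball centered at the origin is absorbing, and $B_{\delta _{0}}^{Y}\left( 0\right) \subseteq M$ follows from (ii) exactly as you say. The first genuine gap is the Galerkin step. The acute-angle information available on $S_{r_{0}}^{X}\left( 0\right) $ concerns the pairing of $f\left( x\right) -y$ with the single, $x$-dependent functional $\widetilde{g}\left( x\right) $; to invoke the Brouwer-type lemma on a finite-dimensional ball you must exhibit, on its boundary sphere, a nonvanishing direction field of nonzero degree against which the finite-dimensional field pairs nonnegatively. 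That requires the finite-dimensional sections of $g$ to be controlled --- in effect, $g$ restricted to each approximating subspace must be, up to positive scalars, a linear isomorphism onto the matching test space. Nothing in (ii) gives this: $g$ is an arbitrary nonlinear map, and the global identity $g\left( B_{r_{0}}^{X}\left( 0\right) \right) =B_{r_{1}}^{Y^{\ast }}\left( 0\right) $ says nothing about behavior on finite-dimensional spheres; for instance $g$ could be constant on $S_{r_{0}}^{X}\left( 0\right) \cap X_{n}$ (the surjectivity being realized off $X_{n}$), in which case the projected boundary field has degree zero and the finite-dimensional problem may simply have no solution. So the ``alignment'' your scheme rests on is asserted, not constructed.

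Second, even granting Galerkin solutions $x_{n}$, the limit passage does not prove what you claim. Reflexivity of $Y$ and boundedness give only a weakly convergent subsequence $f\left( x_{n_{k}}\right) \rightharpoonup y$ (using density of the union of the test spaces in $Y^{\ast }$); since the $x_{n}$ need not converge in any sense, continuity of $f$ is never applicable, and you obtain neither an exact solution of $f\left( x\right) =y$ nor even $\left\Vert f\left( x_{n_{k}}\right) -y\right\Vert _{Y}\rightarrow 0$. What the scheme would yield is that $M$ lies in the \emph{weak} sequential closure of the image --- strictly weaker than the assertion that the image contains a norm-dense subset of $M$ --- and your closing remark conflates the two. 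Finally, the step under (iii) is wrong: invariance of domain fails for continuous injective maps between infinite-dimensional Banach spaces (even for linear isometries, e.g.\ the unilateral shift on $\ell ^{2}$, which satisfies the lower bound of (iii) with $\Phi \left( \tau \right) =\tau $ yet has an image with empty interior), so local injectivity coming from $\Phi $ gives no local openness whatsoever. The role of (iii) is the opposite one, as the paper itself indicates just before Lemma \ref{L_1}: it is a completeness (closedness) condition on the image, so that the dense part of $M$ already covered closes up to all of $M\supseteq B_{\delta _{0}}^{Y}\left( 0\right) $, which is what makes the image bodily. A correct write-up would therefore have to prove closedness of $f\left( B_{r_{0}}^{X}\left( 0\right) \right) $ from the local estimate (iii) --- itself delicate, since (iii) is assumed only near almost every interior point --- rather than openness.
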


The proof of this theorem, and also its generalization was provided in \cite%
{29} (see also, \cite{30, 31, 32}]).\footnote{%
We note Theorem 2 is the generalization of Theorem of such type from
\par
Soltanov K.N., On equations with continuous mappings in Banach spaces.
Funct. Anal. Appl. (1999) 33, 1, 76-81.}

The condition (\textit{iii}) can be generalized, for example, as in the
following proposition.

\begin{corollary}
\label{C_1} Let all conditions of Theorem \ref{Th_1} be fulfilled except for
the inequality (\ref{2.6}) of condition (iii) instead that the following
inequality 
\begin{equation}
\left\Vert f\left( x_{2}\right) -f\left( x_{1}\right) \right\Vert _{Y}\geq
\Phi \left( \left\Vert x_{2}-x_{1}\right\Vert _{X},x_{0},\varepsilon \right)
+\psi \left( \left\Vert x_{1}-x_{2}\right\Vert _{Z},x_{0},\varepsilon \right)
\label{2.7}
\end{equation}%
holds, where $Z$ is Banach space and $X\subset Z$ is compact, $\psi \left(
\cdot ,x_{0},\varepsilon \right) :R_{+}^{1}\longrightarrow R^{1}$ is a
continuous function relatively $\tau \in R_{+}^{1}$ and $\psi \left(
0,x_{0},\varepsilon \right) =0$.

Then the statement of Theorem \ref{Th_1} is true.
\end{corollary}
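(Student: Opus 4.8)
The plan is to reuse the proof of Theorem \ref{Th_1} wholesale for every conclusion that depends only on conditions (i) and (ii), since those hypotheses are left untouched here. In particular, the assertions that $f\left( B_{r_{0}}^{X}\left( 0\right) \right) $ lies in an absorbing set and contains an everywhere dense subset of $M$, as well as the inclusion $B_{\delta _{0}}^{Y}\left( 0\right) \subseteq M$, carry over verbatim. The only place condition (iii) entered the argument of Theorem \ref{Th_1} was to supply a properness (closedness) property of $f$ on the ball, which upgrades ``dense in $M$'' to ``contains the interior ball $B_{\delta _{0}}^{Y}\left( 0\right) $'' and makes the image bodily. It therefore suffices to recover that same properness from the weaker inequality (\ref{2.7}); concretely, I want to show that if $x_{n}\in B_{r_{0}}^{X}\left( 0\right) $ and $\left\{ f\left( x_{n}\right) \right\} $ is Cauchy in $Y$, then a subsequence of $\left\{ x_{n}\right\} $ converges in $X$ to some $x^{\ast }$ with $f\left( x^{\ast }\right) =\lim f\left( x_{n}\right) $.

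First I would fix such a sequence. As $\left\{ x_{n}\right\} \subset B_{r_{0}}^{X}\left( 0\right) $ it is bounded in $X$, and since the embedding $X\subset Z$ is compact I may pass to a subsequence (not relabeled) with $x_{n}\longrightarrow x^{\ast }$ in $Z$. Then $\left\Vert x_{n}-x_{m}\right\Vert _{Z}\longrightarrow 0$ as $n,m\longrightarrow \infty $, and by continuity of $\psi $ together with $\psi \left( 0,x_{0},\varepsilon \right) =0$ the correction term satisfies $\psi \left( \left\Vert x_{n}-x_{m}\right\Vert _{Z},x_{0},\varepsilon \right) \longrightarrow 0$. Inserting this into (\ref{2.7}) rearranged gives
\[
\Phi \left( \left\Vert x_{n}-x_{m}\right\Vert _{X},x_{0},\varepsilon \right) \leq \left\Vert f\left( x_{n}\right) -f\left( x_{m}\right) \right\Vert _{Y}-\psi \left( \left\Vert x_{n}-x_{m}\right\Vert _{Z},x_{0},\varepsilon \right) .
\]
The right-hand side tends to $0$, because $\left\{ f\left( x_{n}\right) \right\} $ is Cauchy and the $\psi $-term vanishes; hence $\Phi \left( \left\Vert x_{n}-x_{m}\right\Vert _{X},x_{0},\varepsilon \right) \longrightarrow 0$. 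Since $\left\{ x_{n}\right\} \subset B_{r_{0}}^{X}\left( 0\right) $, the distances $\left\Vert x_{n}-x_{m}\right\Vert _{X}$ lie in the compact interval $\left[ 0,2r_{0}\right] $, and as $\Phi \left( \cdot ,x_{0},\varepsilon \right) $ is continuous with $\Phi \left( \tau ,x_{0},\varepsilon \right) =0\Leftrightarrow \tau =0$, a value bounded away from $0$ would force $\Phi $ bounded away from $0$; therefore $\left\Vert x_{n}-x_{m}\right\Vert _{X}\longrightarrow 0$. Thus $\left\{ x_{n}\right\} $ is Cauchy in $X$, converges to some $x^{\ast }$ (necessarily its $Z$-limit), and continuity of $f$ yields $f\left( x^{\ast }\right) =\lim f\left( x_{n}\right) $. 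Feeding this closedness back into the Theorem \ref{Th_1} machinery delivers the bodily image and the inclusion $B_{\delta _{0}}^{Y}\left( 0\right) \subseteq f\left( B_{r_{0}}^{X}\left( 0\right) \right) $.

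The step I expect to cost the most care is localization. Inequality (\ref{2.7}) is assumed only on $V_{\varepsilon }\left( x_{0}\right) \cap B_{r_{0}}^{X}\left( 0\right) $ for almost every $x_{0}$, so the Cauchy estimate above must be run inside a single such neighborhood rather than globally. I would handle this by taking the $Z$-limit $x^{\ast }$ in $\mathrm{int}\,B_{r_{0}}^{X}\left( 0\right) $ (the exceptional null set being avoidable after a further subsequence) and noting that the tail of $\left\{ x_{n}\right\} $ eventually enters a fixed neighborhood $V_{\varepsilon }\left( x^{\ast }\right) $ on which (\ref{2.7}) is valid; alternatively a finite subcover of the now precompact trajectory patches the local estimates together. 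The genuine subtlety is that the weaker-norm term $\psi $ must not sabotage the implication $\Phi \longrightarrow 0\Rightarrow \left\Vert x_{n}-x_{m}\right\Vert _{X}\longrightarrow 0$, and this is precisely why $Z$ is required to receive $X$ compactly and why $\psi $ is continuous with $\psi \left( 0,x_{0},\varepsilon \right) =0$: together these make $\psi $ a genuine compact perturbation that disappears in the limit, reducing (\ref{2.7}) to the effect of (\ref{2.6}) and leaving the remainder of the proof of Theorem \ref{Th_1} intact.
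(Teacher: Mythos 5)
The paper itself offers no proof of Corollary \ref{C_1}: it is stated as an unproved generalization, with Theorem \ref{Th_1} deferred to \cite{29, 30}. So the comparison has to be on the merits, and on the merits your identification of what condition (iii) is for is exactly right and consistent with the paper's surrounding material (Notation \ref{N_1} on the \textrm{P-}property and Lemmas \ref{L_1}--\ref{L_2} all treat (iii)-type hypotheses as devices for closedness/properness of $f$ on the ball, while (i)--(ii) give density). Your central mechanism is also correct and is carried out cleanly in the global case $V_{\varepsilon }\left( x_{0}\right) =B_{r_{0}}^{X}\left( 0\right) $, which the paper explicitly allows: extract a $Z$-convergent subsequence by compactness of $X\subset Z$, note $\psi \left( \left\Vert x_{n}-x_{m}\right\Vert _{Z},x_{0},\varepsilon \right) \rightarrow 0$, rearrange (\ref{2.7}) to force $\Phi \left( \left\Vert x_{n}-x_{m}\right\Vert _{X},x_{0},\varepsilon \right) \rightarrow 0$, and conclude $\left\Vert x_{n}-x_{m}\right\Vert _{X}\rightarrow 0$ from continuity and strict positivity of $\Phi $ on $\left( 0,2r_{0}\right] $.

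The genuine gap is the localization step, and both of your proposed patches fail for the same reason: they conflate $Z$-convergence with $X$-proximity. The subsequence converges to $x^{\ast }$ only in $Z$, whereas $V_{\varepsilon }\left( x^{\ast }\right) $ is an $X$-neighborhood, so nothing forces the tail of $\left\{ x_{n}\right\} $ to enter it; the $x_{n}$ can remain pairwise $2\varepsilon _{0}$-separated in the $X$-norm while converging in $Z$, in which case hypothesis (\ref{2.7}) gives no information about any pair $\left( x_{n},x_{m}\right) $ and your chain of inequalities never starts. Worse, a priori $x^{\ast }$ need not even belong to $X$ (only $Y$ is assumed reflexive, not $X$), so invoking $V_{\varepsilon }\left( x^{\ast }\right) $, or arranging that $x^{\ast }$ avoids the exceptional null set, is circular: $x^{\ast }\in X$ is obtained only after the Cauchy property you are trying to prove. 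The finite-subcover variant fails similarly, because the trajectory is precompact in $Z$ and not in $X$, and the $X$-balls $V_{\varepsilon }\left( x_{0}\right) $ need not be open in the $Z$-topology, so compactness yields no finite subcover. To close the argument you must either restrict to the global form of (\ref{2.7}) (the parenthetical case of condition (iii)), or supply the missing dichotomy — show that either some admissible neighborhood captures infinitely many $x_{n}$ (whereupon your estimate applies to that subsequence), or an $\varepsilon _{0}$-separated subsequence exists, and then rule the second case out by an additional argument that the stated hypotheses alone do not obviously provide. That missing step is precisely what the paper buries, without comment, in the reference \cite{29}.
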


From Theorem \ref{Th_1} immediately follows

\begin{theorem}
\label{Th_2}(\textbf{Fixed-Point Theorem}). Let $X$ be a real reflexive
separable Banach space and $f_{1}:D\left( f_{1}\right) \subseteq
X\longrightarrow X$ be a bounded continuous operator. Moreover, let on
closed ball $B_{r_{0}}^{X}\left( 0\right) \subseteq D\left( f_{1}\right) $
centered of $0\in D\left( f_{1}\right) $, operators $f_{1}$ and $f\equiv
Id-f_{1}$ satisfy the following conditions:

(I) The following inequations 
\begin{equation*}
\left\Vert f_{1}\left( x\right) \right\Vert _{X}\leq \mu \left( \left\Vert
x\right\Vert _{X}\right) ,\quad \ \forall x\in B_{r_{0}}^{X}\left( 0\right) ,
\end{equation*}%
\begin{equation}
\left\langle f\left( x\right) ,\widetilde{g}\left( x\right) \right\rangle
\geq \nu \left( \left\Vert x\right\Vert _{X}\right) ,\quad \forall x\in
B_{r_{0}}^{X}\left( 0\right) ,  \label{2.8}
\end{equation}%
hold, where $f_{1}\left( B_{r_{0}}^{X}\left( 0\right) \right) \subseteq
B_{r_{0}}^{X}\left( 0\right) $, $g:D\left( g\right) \subseteq
X\longrightarrow X^{\ast }$, $D\left( f_{1}\right) \subseteq D\left(
g\right) $ and satisfy the condition (ii) (in particular, $g\equiv
J:X\rightleftarrows X^{\ast }$, i.e. $g$ be a duality mapping), $\mu $ and $%
\nu $ are such functions as in Theorem \ref{Th_1};

(II) Almost each $x_{0}\in intB_{r_{0}}^{X}\left( 0\right) $ possesses such
neighborhood $V_{\varepsilon }\left( x_{0}\right) $, $\varepsilon \geq
\varepsilon _{0}>0$ that the following inequality%
\begin{equation*}
\left\Vert f\left( x_{2}\right) -f\left( x_{1}\right) \right\Vert _{X}\geq
\varphi \left( \left\Vert x_{2}-x_{1}\right\Vert _{X},x_{0},\varepsilon
\right) ,
\end{equation*}%
holds for any $\forall x_{1},x_{2}\in V_{\varepsilon }\left( x_{0}\right)
\cap B_{r_{0}}^{X}\left( 0\right) $, where the function $\varphi \left( \tau
,x_{0},\varepsilon \right) $ the similar conditions such as functions from
the right part (\ref{2.7}) satisfies.

Then operator $f_{1}$ possesses a fixed-point in the closed ball $%
B_{r_{0}}^{X}\left( 0\right) $.
\end{theorem}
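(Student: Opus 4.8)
The plan is to derive the fixed-point theorem (Theorem~\ref{Th_2}) from the surjectivity/solvability result of Theorem~\ref{Th_1} applied to the auxiliary operator $f \equiv Id - f_1$. A fixed point of $f_1$ in $B_{r_0}^X(0)$ is exactly a solution of $f(x) = x - f_1(x) = 0$, so it suffices to show that $0 \in Y$ lies in the image $f(B_{r_0}^X(0))$, with $Y = X$ here. The hypotheses (I) and (II) of Theorem~\ref{Th_2} are tailored precisely so that $f$ satisfies conditions (i), (ii), and (iii) of Theorem~\ref{Th_1}: boundedness and continuity of $f$ follow from those of $f_1$ and the identity; the coercivity estimate \eqref{2.8} is condition (ii); and the local separation inequality in (II) is condition (iii) (or its generalized form from Corollary~\ref{C_1}, which is why $\varphi$ is allowed the same structure as the right-hand side of \eqref{2.7}).

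First I would verify that $f = Id - f_1$ is a bounded continuous operator from $B_{r_0}^X(0)$ into $X$, using the triangle inequality $\|f(x)\|_X \le \|x\|_X + \mu(\|x\|_X)$ and continuity of both summands. Next I would invoke Theorem~\ref{Th_1} (with the generalization of Corollary~\ref{C_1} if needed) to conclude that $f(B_{r_0}^X(0))$ contains the ball $B_{\delta_0}^Y(0) = B_{\delta_0}^X(0)$, and in particular is a bodily subset of $X$ containing a neighborhood of the origin. The key consequence is that $0 \in B_{\delta_0}^X(0) \subseteq f(B_{r_0}^X(0))$, so there exists $x^\ast \in B_{r_0}^X(0)$ with $f(x^\ast) = 0$, i.e. $x^\ast = f_1(x^\ast)$.

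The one point requiring care is the interplay of the two constraints $f_1(B_{r_0}^X(0)) \subseteq B_{r_0}^X(0)$ and the coercivity \eqref{2.8}: I would check that these are consistent and that the self-mapping condition together with the sign of $\nu$ does not force the solution to escape the ball. Concretely, the coercivity on the sphere $S_{r_0}^X(0)$ guarantees via Theorem~\ref{Th_1} that the image covers a full neighborhood of $0$ rather than merely touching it, and the self-mapping hypothesis ensures $f$ maps into a region where the solvability geometry of Theorem~\ref{Th_1} applies without the target point $0$ being pushed outside the admissible set $M$. Reflexivity and separability of $X$ are used exactly where Theorem~\ref{Th_1} needs them (to extract the dense/bodily image via weak compactness arguments in the underlying solvability proof).

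The main obstacle I anticipate is not any deep new argument but rather confirming that hypotheses (I)--(II) translate \emph{verbatim} into (i)--(iii) for the specific operator $f = Id - f_1$ — in particular that the duality-mapping choice $g \equiv J$ yields a valid $\tilde g(x) = g(x)/\|g(x)\|$ satisfying condition (ii), and that the local estimate in (II) gives $\Phi$ (or $\Phi + \psi$) with the required property $\Phi(\tau,\cdot,\cdot) = 0 \Leftrightarrow \tau = 0$. Once this dictionary between the two sets of hypotheses is established, the conclusion is immediate from Theorem~\ref{Th_1}, and the proof reduces to a short verification followed by the observation that $0$ belongs to the image.
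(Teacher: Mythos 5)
Your proposal is correct and follows exactly the route the paper takes: the paper presents Theorem \ref{Th_2} as an immediate consequence of Theorem \ref{Th_1} (via Corollary \ref{C_1} for the weakened condition (iii)) applied to $f \equiv Id - f_{1}$, with the fixed point obtained from $0 \in B_{\delta_{0}}^{X}\left( 0\right) \subseteq f\left( B_{r_{0}}^{X}\left( 0\right) \right)$. Your verification that hypotheses (I)--(II) translate into conditions (i)--(iii), including the observation that $0$ lies in the set $M$ by the coercivity estimate \eqref{2.8}, is precisely the dictionary the paper leaves implicit.
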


Now we introduce the following concept.

\begin{definition}
\label{D_3} An operator $f:D\left( f\right) \subseteq X\longrightarrow Y$
possesses the \textrm{P-}\textit{property} iff each precompact subset $%
M\subseteq \func{Im}f$ of $Y$ contains such subsequence (maybe generalized) $%
M_{0}\subseteq M$ that $f^{-1}\left( M_{0}\right) \subseteq G$ and $%
M_{0}\subseteq f\left( G\cap D\left( f\right) \right) $, where $G$ is a
precompact subset of $X$.
\end{definition}

\begin{notation}
\label{N_1} It is easy to see that the condition (iii) of Theorem \ref{Th_1}
one can replace by the condition: $f$ possesses \textrm{P-}\textit{property}.

It should be noted if $f^{-1}$ is the lower or upper semi-continuous mapping
then operator $f:D\left( f\right) \subseteq X\longrightarrow Y$ possesses of
the \textrm{P-}\textit{property}.
\end{notation}

In the above results, condition \textit{(iii)} is required for the
completeness of the image of considered operator $f$. One can bring also
other sufficient conditions on $f$, at which $\Re \left( f\right) $ will be
the closed subset (see, e.g. \cite{30, 31,32}). In particular, the following
results are true.

\begin{lemma}
\label{L_1} Let $X,Y$ be Banach spaces such as above, $f:D\left( f\right)
\subseteq X\longrightarrow Y$ be a bounded continuous operator, and $D\left(
f\right) $ is a weakly closed subset of the reflexive space $X$. Let $f$ has
a weakly closed graph and for each bounded subset $M\subset Y$ the subset $f$
$^{-1}\left( M\right) $ is the bounded subset of $X$. Then $f$ is a weakly
closed operator.
\end{lemma}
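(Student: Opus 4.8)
The plan is to read ``$f$ is a weakly closed operator'' as: $f$ maps weakly closed subsets of $D(f)$ onto weakly closed subsets of $Y$ (the sequential version, which is the notion compatible with the reflexivity and Eberlein--\v{S}mulian tools available here; note that ``$f$ has a weakly closed graph'' is a hypothesis, so the conclusion cannot merely restate it). So I would fix an arbitrary weakly (sequentially) closed set $A\subseteq D(f)$ together with a sequence $(y_n)\subseteq f(A)$ satisfying $y_n\rightharpoonup y$ in $Y$, and aim to show $y\in f(A)$. Writing $y_n=f(x_n)$ with $x_n\in A$, the whole argument reduces to producing a point $x\in A$ with $f(x)=y$.

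First I would control the preimages. A weakly convergent sequence is norm bounded by the uniform boundedness principle, so $\{y_n\}$ is contained in some bounded set $M\subset Y$; the coercivity-type hypothesis that $f^{-1}(M)$ is bounded for every bounded $M$ then forces $(x_n)\subseteq f^{-1}(M)$ to be bounded in $X$. Next I would invoke reflexivity of $X$: by the Eberlein--\v{S}mulian theorem a bounded sequence in a reflexive space has a weakly convergent subsequence, so after passing to a subsequence $x_{n_k}\rightharpoonup x$ in $X$. Since $A$ is weakly closed and each $x_{n_k}\in A$, the limit satisfies $x\in A\subseteq D(f)$. Finally I would use that the graph of $f$ is weakly closed: from $x_{n_k}\rightharpoonup x$ and $f(x_{n_k})=y_{n_k}\rightharpoonup y$ (a subsequence of a weakly convergent sequence converges to the same limit), the pair $(x,y)$ lies in the graph, i.e.\ $f(x)=y$. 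Hence $y=f(x)\in f(A)$, and $f(A)$ is weakly closed.

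The three hypotheses play cleanly separated roles: the boundedness of $f^{-1}(M)$ converts weak convergence of the images into boundedness of the preimages, reflexivity converts that boundedness into weak subsequential compactness, and the weakly closed graph identifies the weak limit with the value of $f$ at the weak limit point. The main obstacle I anticipate is purely topological bookkeeping: weak topologies on infinite-dimensional spaces are generally non-metrizable, so one must choose between the sequential formulation used above (legitimate because reflexivity yields weak sequential compactness on bounded sets via Eberlein--\v{S}mulian) and a genuine net formulation, whose subtlety is that weakly convergent nets need not be bounded. The sequential reading is the natural one here, and it is the only place where care is genuinely needed; the remaining steps are mechanical applications of the stated hypotheses.
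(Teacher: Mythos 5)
Your proposal is correct and follows essentially the same route as the paper: weak convergence of the images gives boundedness, the hypothesis on $f^{-1}(M)$ bounds the preimages, reflexivity of $X$ yields a weakly convergent subsequence whose limit stays in the weakly closed set, and the weakly closed graph identifies $f$ at that limit with the weak limit of the images. The only difference is cosmetic: you run the argument for an arbitrary weakly closed $A\subseteq D(f)$, whereas the paper's sketch applies it directly to $D(f)$ and the range $\Re(f)$; the mechanism is identical.
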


We want to note the graph of operator $f$ is weakly closed iff from $x_{m}%
\overset{X}{\rightharpoonup }x_{0}\in D\left( f\right) $ and $f\left(
x_{m}\right) \overset{Y}{\rightharpoonup }y_{0}\in Y$ follows equality $%
f\left( x_{0}\right) \equiv y_{0}\in \Re \left( f\right) \subset Y$ (for the
general case see \cite{30,31}).

For the proof is enough to note, if $\left\{ y_{m}\right\} _{m=1}^{\infty
}\subset \Re \left( f\right) \subset Y$ is the weakly convergent sequence of 
$Y$ then $f^{-1}\left( \left\{ y_{m}\right\} _{m=1}^{\infty }\right) $ is a
bounded subset of $X$ consequently this has such subsequence $\left\{
x_{m}\right\} _{m=1}^{\infty }$ that $x_{m}\in f^{-1}\left( y_{m}\right) $
and $x_{m}\overset{X}{\rightharpoonup }x_{0}\in D\left( f\right) $ for some
element $x_{0}\in D\left( f\right) $ by virtue of the reflexivity of $X$.

\begin{lemma}
\label{L_2} Let $X,Y$ be reflexive Banach spaces, and $f:D\left( f\right)
\subseteq X\longrightarrow Y$ be a bounded continuous mapping that satisfies
the condition: if $G\subseteq D\left( f\right) $ is a closed convex subset
of $X$ then $f\left( G\right) $ is the weakly closed subset of $Y$. Then if $%
G\subseteq D\left( f\right) $ is a bounded closed convex subset of $X$ then $%
f\left( G\right) $ is a closed subset of $Y$.
\end{lemma}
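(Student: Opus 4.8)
The plan is to reduce the assertion to the elementary fact that the weak topology of $Y$ is coarser than its norm topology, so that every weakly closed subset of $Y$ is automatically norm closed. The hypothesis already delivers weak closedness of $f(G)$ for \emph{every} closed convex $G$; hence, in the particular case of a bounded closed convex $G$, it suffices to upgrade ``weakly closed'' to ``norm closed,'' which is a purely topological step requiring no further information about $f$.

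Concretely, I would argue sequentially. Let $\{y_m\}\subseteq f(G)$ with $y_m\to y_0$ in the norm of $Y$; the goal is to show $y_0\in f(G)$. Since norm convergence implies weak convergence, $y_m\rightharpoonup y_0$. By hypothesis $f(G)$ is weakly closed, hence weakly sequentially closed, so the weak limit $y_0$ lies in $f(G)$. This shows that $f(G)$ contains the limits of its norm-convergent sequences, i.e.\ $f(G)$ is norm closed, which is the desired conclusion.

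The roles of reflexivity and boundedness are to guarantee that the setting is the intended one and that the structural hypothesis can be genuinely invoked: a bounded closed convex subset $G$ of the reflexive space $X$ is weakly compact (bounded closed convex sets in reflexive spaces are weakly compact), so that $f(G)$ is the image of a weakly compact set and the weak-closedness assumption is not vacuous. If one prefers an argument that passes explicitly through the preimages, in the spirit of Lemma~\ref{L_1}, one would choose $x_m\in G$ with $f(x_m)=y_m$, extract from the weakly compact $G$ a subsequence $x_{m_k}\rightharpoonup x_0$, observe that $x_0\in G$ because $G$ is closed and convex and therefore weakly closed, and then apply the weak closedness of $f(G)$ to $y_{m_k}=f(x_{m_k})\rightharpoonup y_0$ to place $y_0$ in $f(G)$.

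The main (and essentially only) subtlety to flag is that $f$ is assumed merely norm continuous, not weak-to-weak continuous, so one cannot conclude $f(x_0)=y_0$ from $x_{m_k}\rightharpoonup x_0$ by continuity alone. It is precisely the structural hypothesis that $f$ carries closed convex sets to weakly closed sets which circumvents this gap; once that hypothesis is in force, the passage from weak closedness to norm closedness is immediate and demands no additional regularity of $f$.
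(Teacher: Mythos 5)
Your proof is correct. In fact your primary argument is more elementary than what the paper indicates: the paper's (very terse) proof says it is "enough to use the reflexivity of the space $X$ and properties of the bounded closed convex subset of $X$," i.e.\ the route through weak compactness of $G$ and extraction of weakly convergent subsequences of preimages, in the spirit of Lemma \ref{L_1}. You observe instead that, as the lemma is stated, the hypothesis already applies to the bounded closed convex set $G$ in question, so the conclusion follows from the single purely topological fact that a weakly closed subset of $Y$ is norm closed (the weak topology being coarser); neither reflexivity of $X$ nor boundedness of $G$ is actually consumed by this deduction, a point you flag honestly rather than hide. Your secondary, preimage-based argument reproduces the paper's intended route, and you correctly note its one subtlety: since $f$ is only norm continuous, one cannot pass weak convergence of $x_{m_k}$ through $f$, so even in that version the final step must again invoke the weak closedness of $f(G)$ -- which confirms that the structural hypothesis does all the work and the compactness scaffolding is dispensable here. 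The only thing the paper's route "buys" is that it would survive a weakening of the hypothesis (e.g.\ if weak closedness of images were only known, or only needed, on weakly compact sets), whereas your main argument exposes that the lemma as literally stated is essentially immediate.
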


For the proof enough to use the reflexivity of the space $X$ and properties
of the bounded closed convex subset of $X$ (see, e. g. \cite{29, 30}).

\begin{lemma}
\label{L_3} Let $X$ be a Banach space such as above, $f:X\longrightarrow
X^{\ast }$ be a monotone operator satisfying conditions of Theorem \ref{Th_1}%
, and $r\geq \tau _{1}$ be some number. Then $f\left( G\right) $ is a
bounded closed subset containing a ball $B_{r_{1}}^{X^{\ast }}\left( f\left(
0\right) \right) $ for every such bounded closed convex body $G\subset X$
that $B_{r}^{X}\left( 0\right) \subset G$, where $r_{1}=r_{1}\left( r\right)
\geq \delta _{1}>0$.
\end{lemma}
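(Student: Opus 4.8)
The plan is to verify the three assertions of the lemma separately: boundedness of $f(G)$, norm-closedness of $f(G)$, and the inclusion $B_{r_{1}}^{X^{\ast }}(f(0))\subseteq f(G)$. Boundedness is immediate from the fact that $f$ lies in the bounded class: condition (i) of Theorem \ref{Th_1} furnishes $\mu$ with $\left\Vert f(x)\right\Vert _{X^{\ast }}\leq \mu (\left\Vert x\right\Vert _{X})$, and since $G$ is bounded I may set $R:=\sup_{x\in G}\left\Vert x\right\Vert _{X}<\infty $, so that $\left\Vert f(x)\right\Vert _{X^{\ast }}\leq \mu (R)$ for all $x\in G$; thus $f(G)\subseteq B_{\mu (R)}^{X^{\ast }}(0)$.

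For closedness I would exploit the monotonicity of $f$ together with the reflexivity of $X$. Let $y_{m}=f(x_{m})\in f(G)$ with $y_{m}\rightarrow y_{0}$ in the norm of $X^{\ast }$. Since $G$ is a bounded closed convex subset of the reflexive space $X$, it is weakly sequentially compact, so a subsequence satisfies $x_{m_{k}}\rightharpoonup x_{0}\in G$. For every $v\in X$ monotonicity gives $\langle f(x_{m_{k}})-f(v),x_{m_{k}}-v\rangle \geq 0$, and I would pass to the limit using that the strong convergence $f(x_{m_{k}})\rightarrow y_{0}$ combines with the weak convergence $x_{m_{k}}\rightharpoonup x_{0}$ to make the strong$\times$weak pairing $\langle f(x_{m_{k}}),x_{m_{k}}-v\rangle \rightarrow \langle y_{0},x_{0}-v\rangle $; this yields $\langle y_{0}-f(v),x_{0}-v\rangle \geq 0$ for all $v\in X$. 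Applying Minty's device, namely taking $v=x_{0}-tw$ with $t>0$, dividing by $t$, and letting $t\rightarrow 0^{+}$ with the continuity of $f$, I obtain $\langle y_{0}-f(x_{0}),w\rangle \geq 0$ for every $w\in X$, hence $y_{0}=f(x_{0})\in f(G)$. This shows $f(G)$ is closed; alternatively one may route the argument through Lemma \ref{L_2} once the image of a convex set is seen to be closed.

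For the inclusion I would reduce to Theorem \ref{Th_1} by the shift $\widetilde{f}(x):=f(x)-f(0)$, which satisfies $\widetilde{f}(0)=0$, remains monotone and continuous, and inherits conditions (i)--(iii): condition (iii) is literally unchanged since $\widetilde{f}(x_{2})-\widetilde{f}(x_{1})=f(x_{2})-f(x_{1})$, while the coercivity (ii) is preserved after absorbing the constant into $\nu $, because $\langle \widetilde{f}(x),\widetilde{g}(x)\rangle \geq \nu (\left\Vert x\right\Vert _{X})-\left\Vert f(0)\right\Vert _{X^{\ast }}$. The threshold $\tau _{1}$ is precisely the value ensuring $\nu (r)-\left\Vert f(0)\right\Vert _{X^{\ast }}\geq \delta _{1}>0$ for $r\geq \tau _{1}$, using that $\nu $ is nondecreasing. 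Taking $r_{0}=r$ in Theorem \ref{Th_1} applied to $\widetilde{f}$ on $B_{r}^{X}(0)\subseteq G$ then gives $B_{\delta _{1}}^{X^{\ast }}(0)\subseteq \widetilde{f}(B_{r}^{X}(0))$, and translating back by $f(0)$ produces $B_{r_{1}}^{X^{\ast }}(f(0))\subseteq f(B_{r}^{X}(0))\subseteq f(G)$ with $r_{1}=r_{1}(r)\geq \delta _{1}>0$.

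The hard part will be the closedness step. The essential point is that I am proving \emph{norm}-closedness, so the approximating images $y_{m}$ converge strongly; this is exactly what rescues the limit passage in the monotonicity inequality, since the otherwise intractable weak$\times$weak pairing $\langle f(x_{m_{k}}),x_{m_{k}}\rangle $ is replaced by the harmless strong$\times$weak pairing $\langle y_{0},x_{0}\rangle $. Had one attempted to establish weak-closedness of $f(G)$ directly, this device would fail and one would be forced to invoke the additional injectivity encoded in condition (iii).
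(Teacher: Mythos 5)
The paper never proves Lemma \ref{L_3}: unlike Lemmas \ref{L_1} and \ref{L_2}, which carry one-line sketches, this one is stated bare, with the underlying machinery deferred to \cite{29, 30}. So there is no in-paper argument to compare yours against, and your proposal must stand on its own --- which it essentially does. Your three-step decomposition is the natural one inside the paper's framework: boundedness of $f(G)$ from condition (i); norm-closedness of $f(G)$ by the Minty device (monotonicity, continuity, and weak sequential compactness of the bounded closed convex set $G$ in the reflexive space $X$, reflexivity being legitimate here since it is the paper's standing assumption); and the ball inclusion by applying Theorem \ref{Th_1} to the shifted operator $\widetilde{f}=f-f(0)$ on $B_{r}^{X}(0)\subseteq G$ and translating back by $f(0)$. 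The limit passage in the Minty step is handled correctly: the strong convergence $f(x_{m_{k}})\rightarrow y_{0}$ is exactly what makes $\langle f(x_{m_{k}}),x_{m_{k}}-v\rangle$ converge against the weakly convergent $x_{m_{k}}$, and the shift correctly explains why the ball in the conclusion is centered at $f(0)$ rather than at $0$.

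Three points need tightening. First, $\mu$ in condition (i) is only assumed continuous, not monotone, so the bound on $f(G)$ should be $\sup_{t\in [0,R]}\mu (t)$ rather than $\mu (R)$ --- trivial, but as written the step uses an unstated hypothesis. Second, $\tau _{1}$ is never defined anywhere in the paper (condition (ii) only introduces $\tau _{0}$); your reading of $\tau _{1}$ as the threshold guaranteeing $\nu (r)-\Vert f(0)\Vert _{X^{\ast }}\geq \delta _{1}>0$ is a sensible reconstruction, but you should flag it as an interpretation rather than something verifiable against the text. Third, the strong conclusion of Theorem \ref{Th_1} that you invoke, namely $B_{\delta _{1}}^{X^{\ast }}(0)\subseteq \widetilde{f}(B_{r}^{X}(0))$, presupposes condition (iii); if ``conditions of Theorem \ref{Th_1}'' in the lemma is read as (i)--(ii) plus monotonicity (which is the more plausible reading, since Lemmas \ref{L_1}--\ref{L_3} are explicitly offered as substitutes for (iii)), then you must close the loop yourself: your own Minty argument, applied to the bounded closed convex ball $B_{r}^{X}(0)$, shows $\widetilde{f}(B_{r}^{X}(0))$ is closed, and combining this with the dense-image part of Theorem \ref{Th_1} gives $B_{\delta _{1}}^{X^{\ast }}(0)\subseteq M\subseteq \widetilde{f}(B_{r}^{X}(0))$. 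You have every piece of this assembly --- you even remark that closedness could route through Lemma \ref{L_2} --- but the proposal as written does not put them together in that order, and without it the appeal to Theorem \ref{Th_1} quietly assumes a hypothesis the lemma may not grant.
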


\subsection{Investigation of equations (\protect\ref{2.1}), (\protect\ref%
{2.1a}) and existence of spectra}

We start with the study of the equation (\ref{2.1a}), in order to understand
the role of the parameter $\lambda $. Let $X,Y$ be real reflexive Banach
spaces, $F:X$ $\longrightarrow Y$ , $G:X\subseteq D\left( G\right) $ $%
\longrightarrow Y$ are nonlinear operators and $B_{r_{0}}^{X}\left( 0\right) 
$ ($r_{0}>0$) be a closed ball centered $0\in X$ that belongs to $D\left(
F\right) $. Since in this work, we will consider only operators acting in
real spaces, therefore will seek real numbers $\lambda _{0}$, under which
the considered equation may be solvable.

Assume on the ball $B_{r_{0}}^{X}\left( 0\right) $ are fulfilled the
following conditions:

1) $F:B_{r_{0}}^{X}\left( 0\right) \longrightarrow Y$ , $G:B_{r_{0}}^{X}%
\left( 0\right) $ $\longrightarrow Y$ are a bounded continuous operators,
i.e. there exist such continuous functions $\mu _{j}:\Re _{+}\longrightarrow 
$ $\Re _{+}$, $j=1,2$ that 
\begin{equation*}
\left\Vert F\left( x\right) \right\Vert _{Y}\leq \mu _{1}\left( \left\Vert
x\right\Vert _{X}\right) ;\quad \left\Vert G\left( x\right) \right\Vert
_{Y}\leq \mu _{2}\left( \left\Vert x\right\Vert _{X}\right) ,
\end{equation*}%
hold for any $x\in B_{r_{0}}^{X}\left( 0\right) $, in addition $F\succ G$ ;

2) Let $f_{\lambda }\equiv F-\lambda G$ is the operator from (\ref{2.1a}).
Assume there exists such parameter $\lambda _{0}\in 
%TCIMACRO{\U{211d} }%
%BeginExpansion
\mathbb{R}
%EndExpansion
_{+}$ that for each $\left( y^{\ast },r,\lambda \right) $\ exists such $x\in
S_{r}^{X}\left( 0\right) $ that the following inequality 
\begin{equation*}
\left\langle f_{\lambda }\left( x\right) ,y^{\ast }\right\rangle \geq \nu
_{\lambda }\left( \left\Vert x\right\Vert _{X}\right) ,\quad \exists x\in
S_{r}^{X}\left( 0\right) ,\quad g\left( x\right) =y^{\ast }
\end{equation*}%
holds,\ where $\left( y^{\ast },r,\left\vert \lambda \right\vert \right) \in
S_{1}^{Y^{\ast }}\left( 0\right) \times \left( 0,r_{0}\right] \times $ $%
\left( 0,\lambda _{0}\right] $ and $\nu _{\lambda }:\Re _{+}\longrightarrow
\Re $ is the continuous function satisfying condition $\left( ii\right) $ of
Theorem \ref{Th_1}, in this case, $\delta _{0}=\delta _{0\lambda }\searrow 0$
if $\left\vert \lambda \right\vert \nearrow \left\vert \lambda
_{0}\right\vert $.

3) Assume for almost every point $x_{0}$ from $B_{r_{0}}^{X}\left( 0\right) $
there exist numbers $\varepsilon \geq \varepsilon _{0}>0$ and such
continuous on $\tau $ functions $\varphi _{\lambda }\left( \tau
,x_{0},\varepsilon \right) \geq 0$, $\psi _{\lambda }(\tau
,x_{0},\varepsilon )$ that the following inequality 
\begin{equation*}
\left\Vert f_{\lambda }\left( x_{1}\right) -f_{\lambda }\left( x_{2}\right)
\right\Vert _{Y}\geq \varphi _{\lambda }\left( \left\Vert
x_{1}-x_{2}\right\Vert _{X},x_{0},\varepsilon \right) +\psi _{\lambda
}(\left\Vert x_{1}-x_{2}\right\Vert _{Z},x_{0},\varepsilon )
\end{equation*}%
holds for any $x_{1},x_{2}\in B_{\varepsilon }^{X}\left( x_{0}\right) $,
where $\varphi \left( \tau ,x_{0},\varepsilon \right) =0\Leftrightarrow \tau
=0$, $\psi _{\lambda }(\cdot ,x_{0},\varepsilon ):\Re _{+}\longrightarrow $ $%
\Re $, $\psi _{\lambda }(0,x_{0},\varepsilon )=0$ for any $\left(
x_{0},\varepsilon \right) $ and $Z$ is the Banach space such that $X\subset
Z $ is compact.

\begin{theorem}
\label{Th_3} Let conditions 1, 2, and 3 be fulfilled on the closed ball $%
B_{r_{0}}^{X}\left( 0\right) \subset X$. Then equation (\ref{2.1a}) is
solvable for $\forall \widetilde{y}\in V_{\lambda }\subset Y$ and each $%
\lambda :0\leq \left\vert \lambda \right\vert \leq \lambda _{0}$; moreover,
the inclusion $B_{\delta _{0}}^{Y}\left( 0\right) \subseteq f_{\lambda
}\left( B_{r_{0}}^{X}\left( 0\right) \right) $ holds for $\delta _{0}\equiv
\delta _{0}\left( \lambda \right) >0$ from the condition 2, where $%
V_{\lambda }$ defined as follows 
\begin{equation*}
V_{\lambda }\equiv \left\{ \left. \widetilde{y}\in Y\right\vert \
\left\langle \widetilde{y},g\left( x\right) \right\rangle \leq \left\langle
f_{\lambda }\left( x\right) ,g\left( x\right) \right\rangle ,\quad \forall
x\in S_{r_{0}}^{X}\left( 0\right) \right\} .
\end{equation*}
\end{theorem}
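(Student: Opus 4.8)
The plan is to apply Theorem \ref{Th_1}, in the strengthened form given by Corollary \ref{C_1}, to the single operator $f_{\lambda }\equiv F-\lambda G$, treating $\lambda $ with $0\leq \left\vert \lambda \right\vert \leq \lambda _{0}$ as a fixed parameter. The essential observation is that conditions 1, 2, 3 are nothing but hypotheses (i), (ii) and the generalized form (\ref{2.7}) of (iii), read off for $f_{\lambda }$ in place of a bare operator $f$; so once this translation is made, the conclusion follows by invoking the cited results and then closing up the image.

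First I would fix $\lambda $ and verify the three hypotheses for $f_{\lambda }$. Hypothesis (i) is immediate from condition 1: since $\left\Vert f_{\lambda }\left( x\right) \right\Vert _{Y}\leq \mu _{1}\left( \left\Vert x\right\Vert _{X}\right) +\left\vert \lambda \right\vert \mu _{2}\left( \left\Vert x\right\Vert _{X}\right) $ and $F,G$ are continuous, $f_{\lambda }$ is a bounded continuous operator, while the relation $F\succ G$ guarantees $G$, and hence $f_{\lambda }$, is defined wherever $F$ is, so that $B_{r_{0}}^{X}\left( 0\right) \subseteq D\left( f_{\lambda }\right) $. Hypothesis (ii) is exactly condition 2: for the prescribed data $\left( y^{\ast },r,\lambda \right) $ one has $g\left( x\right) =y^{\ast }\in S_{1}^{Y^{\ast }}\left( 0\right) $, hence $y^{\ast }=\widetilde{g}\left( x\right) $, and the inequality $\left\langle f_{\lambda }\left( x\right) ,\widetilde{g}\left( x\right) \right\rangle \geq \nu _{\lambda }\left( \left\Vert x\right\Vert _{X}\right) $ with constant $\delta _{0\lambda }>0$ is precisely (ii) for $f_{\lambda }$. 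Finally, condition 3 is the inequality (\ref{2.7}) written for $f_{\lambda }$, with $\varphi _{\lambda }$ in the role of $\Phi $, $\psi _{\lambda }$ in the role of $\psi $, and the compact embedding $X\subset Z$.

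With the hypotheses in place I would apply Theorem \ref{Th_1} and Corollary \ref{C_1} to $f_{\lambda }$. This yields that $f_{\lambda }\left( B_{r_{0}}^{X}\left( 0\right) \right) $ contains an everywhere dense subset of the set $M$ of Theorem \ref{Th_1}, which for $f_{\lambda }$ coincides with $V_{\lambda }$ (the normalization $\widetilde{g}$ versus $g$ being immaterial, the defining inequality being positively homogeneous in the second slot), and that under (\ref{2.7}) this image is a bodily subset of $Y$ with $B_{\delta _{0}}^{Y}\left( 0\right) \subseteq M=V_{\lambda }$, where $\delta _{0}=\delta _{0\lambda }$ is the constant from condition 2. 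To upgrade "dense subset of $V_{\lambda }$" to "all of $V_{\lambda }$", I would use that $V_{\lambda }$ is closed, being an intersection of closed half-spaces $\left\{ y:\left\langle y,g\left( x\right) \right\rangle \leq \left\langle f_{\lambda }\left( x\right) ,g\left( x\right) \right\rangle \right\} $, together with the fact that (\ref{2.7}) forces $\Re \left( f_{\lambda }\right) $ to be a closed subset of $Y$; then for the dense subset $D\subseteq V_{\lambda }$ one gets $V_{\lambda }=\overline{D}\subseteq \overline{f_{\lambda }\left( B_{r_{0}}^{X}\left( 0\right) \right) }=f_{\lambda }\left( B_{r_{0}}^{X}\left( 0\right) \right) $, i.e. solvability of (\ref{2.1a}) for every $\widetilde{y}\in V_{\lambda }$. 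The inclusion $B_{\delta _{0}}^{Y}\left( 0\right) \subseteq f_{\lambda }\left( B_{r_{0}}^{X}\left( 0\right) \right) $ then follows from $B_{\delta _{0}}^{Y}\left( 0\right) \subseteq V_{\lambda }$.

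The step I expect to be the main obstacle is exactly this last upgrade, namely the closedness of $\Re \left( f_{\lambda }\right) $. The coercivity in (ii) alone produces only a dense subset of $V_{\lambda }$, and without closedness one cannot capture the whole of $V_{\lambda }$; it is the extra term $\psi _{\lambda }\left( \left\Vert x_{1}-x_{2}\right\Vert _{Z},x_{0},\varepsilon \right) $ of condition 3, together with the compactness of the embedding $X\subset Z$, that supplies the compactness needed to pass to the limit along a sequence of approximate solutions and conclude that limit points of the image lie in the image (this is the role of (\ref{2.7}) in Corollary \ref{C_1}, in the same spirit as the closed-image results of Lemmas \ref{L_1}--\ref{L_3}). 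A secondary bookkeeping point is the dependence on $\lambda $: for each fixed $\lambda $ with $\left\vert \lambda \right\vert \leq \lambda _{0}$ one has $\delta _{0\lambda }>0$, and since $\delta _{0\lambda }\searrow 0$ as $\left\vert \lambda \right\vert \nearrow \left\vert \lambda _{0}\right\vert $, the guaranteed ball $B_{\delta _{0}\left( \lambda \right) }^{Y}\left( 0\right) $ shrinks as $\lambda $ approaches $\lambda _{0}$, consistent with the statement and requiring no uniform treatment in $\lambda $.
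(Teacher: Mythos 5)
Your proposal is correct and takes essentially the same route as the paper: the paper's proof is precisely the observation that conditions 1, 2, 3 are hypotheses (i), (ii), and the generalized condition (\ref{2.7}) of Theorem \ref{Th_1}/Corollary \ref{C_1} for $f_{\lambda}$ at each fixed $\lambda$ with $\left\vert \lambda \right\vert \leq \lambda_{0}$, followed by an appeal to those results. Your additional elaboration (identifying $M$ with $V_{\lambda}$ and using closedness of the image to pass from the dense subset to all of $V_{\lambda}$) simply spells out details the paper leaves implicit in the phrase ``applying Theorem \ref{Th_1}.''
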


For the proof is sufficient to note that all conditions of Theorem \ref{Th_1}
are fulfilled for each fixed $\lambda :$ $\left\vert \lambda \right\vert
<\lambda _{0}$ due to conditions of Theorem \ref{Th_3}, therefore applying
Theorem \ref{Th_1} we get the correctness of Theorem \ref{Th_3}.

Consequently, the equation (\ref{2.3}) also is solvable in $B_{r}^{X}\left(
0\right) $ under the conditions on $\widetilde{f}_{\lambda }$ of the above
type that depends at $\lambda _{0}$, e.g. 
\begin{equation*}
\left\Vert G\left( F^{-1}\left( y_{1}\right) \right) -G\left( F^{-1}\left(
y_{2}\right) \right) \right\Vert _{Y}\leq C\left( x_{0},\varepsilon \right)
\left\Vert y_{1}-y_{2}\right\Vert _{Y}+\psi _{\lambda }(\left\Vert
y_{1}-y_{2}\right\Vert _{Z},y_{0},\varepsilon ),
\end{equation*}%
where $C\left( x_{0},\varepsilon \right) \lambda _{0}<1$ and the inclusion $%
Y\subset Z$ is compact.

Whence using Theorem \ref{Th_2} one can obtain the solvability of the
equation (\ref{2.1a}). Really, let $Y=X^{\ast }$ and closed ball $%
B_{r_{0}}^{X}\left( x_{0}\right) $ ($r_{0}>0$) belongs to $D\left( F\right) $%
. Let condition 1 of Theorem \ref{Th_3} is fulfilled on ball $%
B_{r_{0}}^{X}\left( x_{0}\right) $. Assume the following conditions are
fulfilled.

2%
%TCIMACRO{\U{b4}}%
%BeginExpansion
\'{}%
%EndExpansion
) There exists such parameter $\lambda _{1}\in 
%TCIMACRO{\U{211d} }%
%BeginExpansion
\mathbb{R}
%EndExpansion
$ that $\lambda _{1}G\left( F^{-1}\left( F\left( B_{r_{0}}^{X}\left(
x_{0}\right) \right) \right) \right) \subseteq B_{r_{0}}^{X}\left(
x_{0}\right) $ and for each $x^{\ast }\in S_{1}^{X^{\ast }}\left( 0\right) $%
\ there exists such $x\in S_{r}^{X}\left( x_{0}\right) $, for each $r\in
\left( 0<r\leq r_{0}\right] $ that the following inequality 
\begin{equation*}
\left\langle \widetilde{f}_{\lambda _{1}}\left( x\right) ,x^{\ast
}\right\rangle \geq \nu _{\lambda _{1}}\left( \left\Vert x-x_{0}\right\Vert
_{X}\right) =\nu _{\lambda _{1}}\left( r\right) ,\quad x\in S_{r}^{X}\left(
x_{0}\right) \subset B_{r_{0}}^{X}\left( x_{0}\right)
\end{equation*}%
holds, where $\nu _{\lambda _{1}}:\Re _{+}\longrightarrow \Re $ is the
continuous function that satisfies the condition $\left( ii\right) $ of
Theorem \ref{Th_1}

3%
%TCIMACRO{\U{b4}}%
%BeginExpansion
\'{}%
%EndExpansion
) For almost every $\widehat{x}\in B_{r_{0}}^{X}\left( x_{0}\right) $ there
are such numbers $\varepsilon \geq \varepsilon _{0}>0$ and continuous
functions $\Phi _{\lambda _{1}}(\cdot ,\widehat{x},\varepsilon ):\Re
_{+}\longrightarrow $ $\Re _{+}$, $\varphi _{\lambda }(\cdot ,\widehat{x}%
,\varepsilon ):\Re _{+}\longrightarrow $ $\Re $ for each $\left( \widehat{x}%
,\varepsilon \right) $ that the following inequality 
\begin{equation*}
\left\Vert \widetilde{f}_{\lambda _{1}}\left( x_{1}\right) -\widetilde{f}%
_{\lambda _{1}}\left( x_{2}\right) \right\Vert _{X}\geq \Phi _{\lambda
_{1}}\left( \left\Vert x_{1}-x_{2}\right\Vert _{X},\widehat{x},\varepsilon
\right) +\varphi _{\lambda _{1}}\left( \left\Vert x_{1}-x_{2}\right\Vert
_{Z},\widehat{x},\varepsilon \right)
\end{equation*}%
holds for any $x_{1},x_{2}\in U_{\varepsilon }\left( \widehat{x}\right) \cap
B_{r_{0}}^{X}\left( x_{0}\right) $, where $\Phi _{\lambda _{1}}\left( \tau ,%
\widehat{x},\varepsilon \right) \geq 0$ and $\Phi _{\lambda _{1}}\left( \tau
,\widehat{x},\varepsilon \right) =0\Leftrightarrow \tau =0$, $\varphi
_{\lambda _{1}}\left( 0,\widehat{x},\varepsilon \right) =0$, and also $Z$ is
the Banach space such that $X\subset Z$ is compact.

Whence implies, that for defined above $\lambda _{1}$ all conditions of
Theorem \ref{Th_1} are fulfilled for the operator $\widetilde{f}_{\lambda
_{1}}$ on the closed ball $B_{r_{0}}^{X}\left( x_{0}\right) $. Consequently, 
$\widetilde{f}_{\lambda _{1}}\left( B_{r_{0}}^{X}\left( x_{0}\right) \right) 
$ contains a closed absorbing subset of $X$ (at least, $0\in X$) by virtue
of the Theorem \ref{Th_1}. In the other words, $0\in \widetilde{f}_{\lambda
_{1}}\left( B_{r_{0}}^{X}\left( x_{0}\right) \right) $ and therefore there
exists an element $\widetilde{x}\in B_{r_{0}}^{X}\left( x_{0}\right) $ for
which $f_{\lambda _{1}}\left( \widetilde{x}\right) =0$ holds, i. e. $F\left( 
\widetilde{x}\right) =\lambda _{1}G\left( \widetilde{x}\right) $.

The obtained result one can formulate as follows.

\begin{corollary}
\label{C_2} Let $F,G$ be above determined operators, $F\succ G$, $D\left(
F\right) \subseteq D\left( G\right) $, and there exists such number $\lambda
_{1}$ that conditions 1), 2%
%TCIMACRO{\U{b4}}%
%BeginExpansion
\'{}%
%EndExpansion
), 3%
%TCIMACRO{\U{b4}}%
%BeginExpansion
\'{}%
%EndExpansion
) are fulfilled on the closed ball $B_{r}^{X}\left( x_{0}\right) \subseteq
D\left( F\right) \subseteq X$. Then there exists an element $\widetilde{x}%
\in B_{r}^{X}\left( x_{0}\right) $ such, that $F\left( \widetilde{x}\right)
=\lambda _{1}G\left( \widetilde{x}\right) $ or $\lambda _{1}G\left(
F^{-1}\left( \cdot \right) \right) $ has fixed point.
\end{corollary}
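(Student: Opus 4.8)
The plan is to exploit the fact that, under the standing assumption that $F$ is invertible with $F^{-1}\in\mathfrak{B}C^{0}$, the eigenvalue relation $F(\widetilde{x})=\lambda_{1}G(\widetilde{x})$ is merely a change of variable away from a zero of the auxiliary operator $\widetilde{f}_{\lambda_{1}}$ of \eqref{2.3}. Writing $y=F(x)$, $x=F^{-1}(y)$, the equation $F(x)=\lambda_{1}G(x)$ becomes $y=\lambda_{1}G(F^{-1}(y))$, i.e. $\widetilde{f}_{\lambda_{1}}(y)\equiv\lambda_{1}^{-1}y-G(F^{-1}(y))=0$. A zero $\widetilde{x}$ of $\widetilde{f}_{\lambda_{1}}$ is exactly a fixed point of $\lambda_{1}G\circ F^{-1}$, and then $F^{-1}(\widetilde{x})$ solves the eigenvalue equation; conversely a solution of $F(x)=\lambda_{1}G(x)$ produces the fixed point $F(x)$. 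Hence the two assertions of the corollary are equivalent, and it suffices to manufacture a single zero of $\widetilde{f}_{\lambda_{1}}$ on $B_{r_{0}}^{X}(x_{0})$.

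First I would check that conditions 1), 2$'$), 3$'$) are nothing but hypotheses (i), (ii), (iii) of Theorem \ref{Th_1} (in its strengthened Corollary \ref{C_1} form) written out for $\widetilde{f}_{\lambda_{1}}$ on the ball $B_{r_{0}}^{X}(x_{0})$. The hypotheses $F\succ G$ and $D(F)\subseteq D(G)$ guarantee that $G\circ F^{-1}$, and therefore $\widetilde{f}_{\lambda_{1}}$, is well defined on $F\!\left(B_{r_{0}}^{X}(x_{0})\right)$; boundedness and continuity (i) then follow from condition 1) together with $F^{-1}\in\mathfrak{B}C^{0}$. The coercivity estimate (ii) is precisely the lower bound on $\langle\widetilde{f}_{\lambda_{1}}(x),x^{\ast}\rangle$ in 2$'$): the ``for each $x^{\ast}\in S_{1}^{X^{\ast}}(0)$ there is $x\in S_{r}^{X}(x_{0})$ with $g(x)=x^{\ast}$'' clause encodes the surjectivity $g\!\left(B_{r_{0}}^{X}(0)\right)=B_{r_{1}}^{Y^{\ast}}(0)$ required in (ii), while $\nu_{\lambda_{1}}$ plays the role of $\nu$. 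Finally, the separation estimate in 3$'$), with its compact embedding $X\subset Z$ and the function $\Phi_{\lambda_{1}}$ vanishing only at $\tau=0$, is exactly inequality \eqref{2.7} of Corollary \ref{C_1}.

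With these verifications in hand, Theorem \ref{Th_1} (via Corollary \ref{C_1}) applies to $\widetilde{f}_{\lambda_{1}}$ and shows that $\widetilde{f}_{\lambda_{1}}\!\left(B_{r_{0}}^{X}(x_{0})\right)$ is a bodily subset of $X$ containing a ball $B_{\delta_{0}}^{Y}(0)$ about the origin, with $\delta_{0}=\delta_{0}(\lambda_{1})>0$; in particular $0$ lies in this image. Thus there is $\widetilde{x}\in B_{r_{0}}^{X}(x_{0})$ with $\widetilde{f}_{\lambda_{1}}(\widetilde{x})=0$, which unwinds to $\widetilde{x}=\lambda_{1}G(F^{-1}(\widetilde{x}))$, i.e. the asserted fixed point of $\lambda_{1}G\circ F^{-1}$, and hence to $F(F^{-1}(\widetilde{x}))=\lambda_{1}G(F^{-1}(\widetilde{x}))$, the eigenvalue relation. (Alternatively one may run the same input through the fixed-point Theorem \ref{Th_2}, since the first part of 2$'$) is precisely the self-mapping condition $\lambda_{1}G(F^{-1}(\cdot))\subseteq B_{r_{0}}^{X}(x_{0})$ of hypothesis (I).) I expect the main obstacle to be the bookkeeping of step two rather than any new idea: one must confirm that the quantifier ``for each $x^{\ast}$ there exists $x$'' in 2$'$) truly yields the coercivity of (ii) uniformly in $r\in(0,r_{0}]$, and one must keep track of the admissibility of $\lambda_{1}$, since the coercivity constant $\delta_{0}(\lambda_{1})$ must remain strictly positive — it is this positivity (degenerating as $|\lambda|\nearrow|\lambda_{0}|$, cf. the note $\delta_{0\lambda}\searrow0$ in condition 2) that forces $\lambda_{1}$ to be chosen admissibly rather than arbitrarily.
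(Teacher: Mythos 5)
Your proposal is correct and is essentially the paper's own proof: the paper likewise reads conditions 1), $2'$), $3'$) as the hypotheses of Theorem \ref{Th_1} (in the form of Corollary \ref{C_1}) for the operator $\widetilde{f}_{\lambda_{1}}$ on the closed ball, concludes from that theorem that $0\in \widetilde{f}_{\lambda_{1}}\left( B_{r_{0}}^{X}\left( x_{0}\right) \right)$, and unwinds the resulting zero into $F\left( \widetilde{x}\right) =\lambda_{1}G\left( \widetilde{x}\right)$, equivalently a fixed point of $\lambda_{1}G\circ F^{-1}$. Even your closing alternative via the fixed-point Theorem \ref{Th_2} mirrors the paper, which introduces this passage with the remark that the result can be obtained by using Theorem \ref{Th_2}.
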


Let $X,Y$ be Banach spaces, and\ $B_{r_{0}}^{X}\left( 0\right) \subseteq
D\left( F\right) \subset X$, $r_{0}>0$, $F\succ G$, $F\left( 0\right) =0$, $%
G\left( 0\right) =0$ be a bounded operators, and there are the continuous
functions $\nu _{F},\nu _{G}:\Re _{+}\longrightarrow \Re $ satisfying
condition $\left( ii\right) $ of Theorem \ref{Th_1} such that for each $%
y^{\ast }\in S_{1}^{Y^{\ast }}\left( 0\right) $\ there exists $x\in
S_{r}^{X}\left( 0\right) $, for which the inequalities \ 
\begin{equation*}
\left\langle F\left( x\right) ,y^{\ast }\right\rangle \geq \nu _{F}\left(
\left\Vert x\right\Vert _{X}\right) ,\quad \left\langle G\left( x\right)
,y^{\ast }\right\rangle \geq \nu _{G}\left( \left\Vert x\right\Vert
_{X_{1}}\right)
\end{equation*}%
hold, where $X_{1}$ is the Banach space that $X\subseteq X_{1}$ (we will
denote the relation between $x$ and $y^{\ast }\in S_{1}^{Y^{\ast }}\left(
0\right) $ by $g:S_{r}^{X}\left( 0\right) \longrightarrow S_{1}^{Y^{\ast
}}\left( 0\right) $, $0<r\leq $\ $r_{0}$, so that $g\left( x\right) =y^{\ast
}$). Then according to condition 2%
%TCIMACRO{\U{b4} }%
%BeginExpansion
\'{}
%EndExpansion
of Corollary \ref{C_2}, one may expect that spectrum of the operator $%
F:D\left( F\right) \subset X\longrightarrow Y$ relative to operator $%
G:D\left( G\right) \subseteq X\longrightarrow Y$ can define in the following
way 
\begin{equation}
\lambda =\inf \left\{ \frac{\left\langle F\left( x\right) ,g\left( x\right)
\right\rangle }{\left\langle G\left( x\right) ,g\left( x\right)
\right\rangle }\left\vert \ x\in \right. B_{r_{0}}^{X}\left( 0\right)
\backslash \left\{ 0\right\} \right\} ,\quad r_{0}>0.  \label{2.5}
\end{equation}

Now we will clarify when the determined by (\ref{2.5}) $\lambda $ is the
spectrum of operator $G\circ F^{-1}$ or the spectrum of operator $F$
relative to operator $G$. Generally speaking, one cannot name it since the
composition $G\circ F^{-1}$ can be nonlinear and $\lambda _{1}$ may be a
function as $\lambda _{1}=\lambda _{1}\left( x_{1}\right) $, unlike the
linear case, where $x_{1}$ is the element on which the relation (\ref{2.5})
attains the infimum. Moreover, if we define the subspace $\Gamma _{\lambda
_{1}}=\left\{ \alpha x_{1}\left\vert \ \alpha \in R\right. \right\} \subset
X $ then for $\alpha x_{1}\in D\left( F\right) $, generally, $\alpha \lambda
_{1}x_{1}\neq G\circ F^{-1}\left( \alpha x_{1}\right) $ since $G\circ F^{-1}$
is the nonlinear operator.

Indeed, if the power of nonlinearity of the operator $F$ is great than the
power of nonlinearity of operator $G$, or the inverse of its, then
obviously, will the case $\lambda _{1}=\lambda _{1}\left( x_{1}\right) $.
For example, operators $F$ and $G$ defines in the following form 
\begin{equation*}
F\left( u\right) =-\nabla \circ \left( \left\vert \nabla u\right\vert
^{p_{0}-2}\nabla u\right) ,\quad G\left( u\right) =\left\vert u\right\vert
^{p_{1}-2}u,\quad Y=W^{-1,q}\left( \Omega \right) ,
\end{equation*}%
where $X=W_{0}^{1,p_{0}}\left( \Omega \right) \cap L^{p_{1}}\left( \Omega
\right) $, $\Omega \subset R^{n}$, $n\geq 1$, with sufficiently smooth
boundary $\partial \Omega $ and $p=\max \left\{ p_{0},p_{1}\right\} $, $%
p_{0},p_{1}>2$, $q=p`=\frac{p}{p-1}$. \ Assume $p_{0}\neq p_{1}$ and $%
F:D\left( F\right) =W^{1,p_{0}}\left( \Omega \right) \longrightarrow $ $%
W^{-1,q_{0}}\left( \Omega \right) $, $G:D\left( G\right) =L^{p_{1}}\left(
\Omega \right) \longrightarrow $ $L^{q_{1}}\left( \Omega \right) $. Then
using (\ref{2.5}) we get 
\begin{equation*}
\lambda =\inf \left\{ \frac{\left\langle F\left( u\right) ,u\right\rangle }{%
\left\langle G\left( u\right) ,u\right\rangle }\left\vert \ u\in \right.
B_{r_{0}}^{X}\left( 0\right) \backslash \left\{ 0\right\} \right\} =
\end{equation*}%
\begin{equation*}
\inf \left\{ \frac{\left\Vert \nabla u\right\Vert _{L^{p_{0}}}^{p_{0}}}{%
\left\Vert u\right\Vert _{L^{p_{1}}}^{p_{1}}}\left\vert \ u\in
B_{r_{0}}^{W_{0}^{1,p_{0}}\cap L^{p_{1}}\left( \Omega \right) }\left(
0\right) \backslash \left\{ 0\right\} \right. \right\} .
\end{equation*}%
Whence we have if $p_{0}>p_{1}$, then 
\begin{equation*}
\lambda =\inf \left\{ \left( \frac{\left\Vert \nabla u\right\Vert
_{L^{p_{0}}}}{\left\Vert u\right\Vert _{L^{p_{1}}}}\right)
^{p_{1}}\left\Vert \nabla u\right\Vert _{L^{p_{0}}}^{p_{0}-p_{1}}\left\vert
\ u\in B_{r_{0}}^{W_{0}^{1,p_{0}}\cap L^{p_{1}}\left( \Omega \right) }\left(
0\right) \backslash \left\{ 0\right\} \right. \right\}
\end{equation*}%
and if $p_{0}<p_{1}$, then 
\begin{equation*}
\lambda =\inf \left\{ \left( \frac{\left\Vert \nabla u\right\Vert
_{L^{p_{0}}}}{\left\Vert u\right\Vert _{L^{p_{1}}}}\right)
^{p_{0}}\left\Vert u\right\Vert _{L^{p_{1}}}^{p_{0}-p_{1}}\left\vert \ u\in
B_{r_{0}}^{W_{0}^{1,p_{0}}\cap L^{p_{1}}\left( \Omega \right) }\left(
0\right) \backslash \left\{ 0\right\} \right. \right\} .
\end{equation*}

Consequently, $\lambda $ will be a function of $u$, i.e. $\lambda
_{1}=\lambda \left( u_{1}\right) $, here $u_{1}\in X$ is the element of the
domain, on which the above-mentioned expression attained the infimum (see,
still examples in Section 3.).

\begin{remark}
\label{R_1} Whence implies in order to $\lambda _{0}$ couldn't be the
function of $x$ main parts of operators $F$ and $G$ must have a common
degree of nonlinearity. Theorem \ref{Th_1} follows that the defined number $%
\lambda _{0}$ is the number assumed to exist in the conditions of this
theorem. Moreover, the finding number allows us in mentioned theorem to
state the existence of solutions for each $\lambda :0\leq \left\vert \lambda
\right\vert \leq \lambda _{0}$ if the element on the right side is from a
determined subset.
\end{remark}

The spectrum of an operator usually must be to characterize the examined
operator, but the found $\lambda $ doesn't satisfy this. Therefore, here is
used another way, different from the above-mentioned works.

Due to the above explanations, we get at that in order to the found $\lambda 
$ not be a function on $x$, is necessary for the existence of some relations
between the operators $F$ and $G$.

So, we assume one of the following conditions are fulfilled: (1) $F$ and $G$
are homogeneous with common exponent $p>0$ or a common function $\phi \left(
\cdot \right) $, i.e. $F\left( \mu x\right) \equiv \mu ^{p}F\left( x\right) $%
, $G\left( \mu x\right) \equiv \mu ^{p}G\left( x\right) $ or $F\left( \mu
x\right) \equiv \phi \left( \mu \right) F\left( x\right) $, $G\left( \mu
x\right) \equiv \phi \left( \mu \right) G\left( x\right) $ for any $\mu >0$;
(2) Investigate the problem locally, i.e. study the problem on the closed
ball $B_{r}^{X}\left( 0\right) \subseteq D\left( f_{\lambda }\right) $ for
selected $r>0$ and to seek of $\lambda $ in the form $\lambda \equiv \lambda
\left( r\right) $. \ 

We start to study case (1), i.e. when $F$ and $G$ are homogeneous with
exponent $p>0$. Whence implies that (\ref{2.5}) defines the number $\lambda $
independent on $x$, hence if denote this minimum by $\lambda _{1}$ and the
element, at which minimum attained by $x_{1}$, then (\ref{2.5}) will fulfill
for $\forall x\in $ $\Gamma _{\lambda _{1}}\cap D\left( F\right) $.
Consequently, in this case, one can define $x_{1}$ as the first eigenvector
and $\lambda _{1}$ as the first eigenvalue of operator $F$ relative to
operator $G$ (as in the linear case). In other words, $y_{1}=F\left(
x_{1}\right) $ is the fixed point of operator $\lambda G\circ F^{-1}$.

Now let be the case (2). Then if the orders of homogeneity $F$ and $G$ are
different, i.e. that given by different functions, e.g. by polynomial
functions with exponents $p_{F}\neq p_{G}$ then possible 2 variants: \textit{%
(a) } $p_{F}>p_{G}$ and \textit{(b) }$p_{F}<p_{G}$.

If the case (a) occur then $F\left( x\right) =r^{p_{F}}F\left( \widetilde{x}%
\right) $ and $G\left( x\right) =r^{p_{G}}G\left( \widetilde{x}\right) $ if
any $x\in X_{0}$ to write as $x\equiv r\widetilde{x}\ $ where $\left\Vert
x\right\Vert _{X_{0}}\equiv r$ and $\widetilde{x}=\frac{x}{r}\in
S_{1}^{X_{0}}\left( 0\right) \subset X_{0}$. Hence due to Theorem \ref{Th_3}%
, we get that $G$ can be the perturbation of operator $F$, therefore this
case not is essential. Let be the case (b). In this case, if there exists
such $\lambda _{0}$ and $x_{0}$ that $F\left( x_{0}\right) =\lambda
_{0}G\left( x_{0}\right) $ then 
\begin{equation*}
F\left( x_{0}\right) =r_{0}^{p_{F}}F\left( \widetilde{x}_{0}\right) ,\
G\left( x\right) =r_{0}^{p_{G}}G\left( \widetilde{x}_{0}\right)
\Longrightarrow r_{0}^{p_{F}}F\left( \widetilde{x}_{0}\right) =\lambda
_{0}r_{0}^{p_{F}}G\left( \widetilde{x}_{0}\right)
\end{equation*}%
\begin{equation*}
\Longrightarrow F\left( \widetilde{x}_{0}\right) =\lambda \left( \lambda
_{0},r_{0}\right) G\left( \widetilde{x}_{0}\right) \Longrightarrow \lambda
\left( \lambda _{0},r_{0}\right) =\lambda _{0}r_{0}^{p_{_{G}}-p_{_{F}}}
\end{equation*}%
holds. Hence follows that if we change $x_{0}\equiv r_{0}\ \widetilde{x}_{0}$
to $x_{1}\equiv r_{1}\ \widetilde{x}_{0}$, then $\lambda $ will change to $%
\lambda =\lambda _{0}r_{1}^{p_{_{G}}-p_{_{F}}}$. In other words, where
follows if $p_{F}\neq p_{G}$ then any existing number $\lambda $ will depend
on element $x\in X$, i.e. $\lambda =\lambda \left( r\right) $ on the line $%
\left\{ x\in X\left\vert \ x=r\ \widetilde{x}_{0},\right. r\in 
%TCIMACRO{\U{211d} }%
%BeginExpansion
\mathbb{R}
%EndExpansion
\right\} $. The previous discussion shows, that there are two variants
either $p_{F}=p_{G}$ or $\lambda _{0}=\lambda _{0}\left( x_{0}\right) $, and
investigating these cases will be sufficient.

So, here we will study the posed question mainly in the case when condition $%
p_{F}=p_{G}$ holds.

Consequently, the concept defined in the articles \cite{1, 2, 3, 11, 13, 18,
19, 22, 25, 27, 28}, etc. of the semilinear spectral set is special case of
the Definition \ref{D_S}, by virtue of (\ref{2.2}) and (\ref{2.3}).

\section{Some Application of General Results}

Consider the following problems 
\begin{equation}
-\nabla \circ \left( \left\vert \nabla u\right\vert ^{p-2}\nabla u\right)
-\lambda \left\vert u\right\vert ^{p_{0}-2}u\left\vert \nabla u\right\vert
^{p_{1}}=0,\quad u\left\vert _{\ \partial \Omega }\right. =0,\ \lambda \in 
%TCIMACRO{\U{2102} }%
%BeginExpansion
\mathbb{C}
%EndExpansion
,  \label{3.1}
\end{equation}

\begin{equation}
-\nabla \circ \left( \left\vert u\right\vert ^{p-2}\nabla u\right) -\lambda
\left\vert u\right\vert ^{p_{0}-2}u=0,\quad u\left\vert _{\ \partial \Omega
}\right. =0,\ \lambda \in 
%TCIMACRO{\U{2102} }%
%BeginExpansion
\mathbb{C}
%EndExpansion
,  \label{3.2}
\end{equation}%
where $\Omega \subset 
%TCIMACRO{\U{211d} }%
%BeginExpansion
\mathbb{R}
%EndExpansion
^{n}$ is an open bounded domain with sufficiently smooth boundary $\partial
\Omega $, $n\geq 1$, $p_{0}+p_{1}=p$ and $\nabla \equiv \left(
D_{1},...,D_{n}\right) $. Denote by $f_{0}$ the operator generated by ((\ref%
{3.1}) which acts from $W_{0}^{1.p}\left( \Omega \right) $ to $%
W^{-1,q}\left( \Omega \right) $. It is easy to see that $f_{0}:W_{0}^{1.p}%
\left( \Omega \right) \longrightarrow W^{-1,q}\left( \Omega \right) $ is a
continuous operator, and 
\begin{equation*}
0=\left\langle f_{0}\left( u\right) ,u\right\rangle \equiv \left\langle
-\nabla \circ \left( \left\vert \nabla u\right\vert ^{p-2}\nabla u\right)
-\lambda \left\vert u\right\vert ^{p_{0}-2}u\left\vert \nabla u\right\vert
^{p_{1}},u\right\rangle =
\end{equation*}%
\begin{equation*}
\left\Vert \nabla u\right\Vert _{p}^{p}-\underset{\Omega }{\int }\lambda
\left\vert u\right\vert ^{p_{0}}\left\vert \nabla u\right\vert ^{p_{1}}dx\
\Longrightarrow \left\Vert \nabla u\right\Vert _{p}^{p}=\lambda \underset{%
\Omega }{\int }\left\vert u\right\vert ^{p_{0}}\left\vert \nabla
u\right\vert ^{p_{1}}dx\ 
\end{equation*}%
holds for any $u\in W_{0}^{1.p}\left( \Omega \right) $. Whence follows $%
\lambda \geq 0$ since both of these expressions are positive.

(1) We will investigate of problem (\ref{3.1}) by using of the Theorem \ref%
{Th_1} or Theorem \ref{Th_3} but we interest to study the question on the
spectrum therefore here we will use Corollary \ref{C_2}. According to the
previous section, we can introduce the following denotations 
\begin{equation*}
F\left( u\right) =-\nabla \left( \left\vert \nabla u\right\vert ^{p-2}\nabla
u\right) ,\quad F:W_{0}^{1.p}\left( \Omega \right) \longrightarrow
W^{-1,q}\left( \Omega \right) ,
\end{equation*}%
\begin{equation*}
G\left( u\right) =\left\vert u\right\vert ^{p_{0}-2}u\left\vert \nabla
u\right\vert ^{p_{1}},\quad G:W_{0}^{1.p}\left( \Omega \right)
\longrightarrow W^{-1,q}\left( \Omega \right) .
\end{equation*}

So, it needs to seek the minimal value of $\lambda $ and function $%
u_{\lambda }\left( x\right) $ (if it exists) for which the equality 
\begin{equation*}
\left\Vert \nabla u\right\Vert _{p}^{p}=\lambda \underset{\Omega }{\int }%
\left\vert u\right\vert ^{p_{0}}\left\vert \nabla u\right\vert
^{p_{1}}dx,\quad u\in B_{1}^{W_{0}^{1.p}\left( \Omega \right) }\left(
0\right)
\end{equation*}%
or holds the equality 
\begin{equation*}
\lambda =\frac{\left\Vert \nabla u\right\Vert _{p}^{p}}{\underset{\Omega }{%
\int }\left\vert u\right\vert ^{p_{0}}\left\vert \nabla u\right\vert
^{p_{1}}dx}=\ \underset{\Omega }{\int }\left( \frac{\left\Vert \nabla
u\right\Vert _{p}}{\left\vert u\right\vert }\right) ^{p_{0}}\left( \frac{%
\left\Vert \nabla u\right\Vert _{p}}{\left\vert \nabla u\right\vert }\right)
^{p_{1}}dx.
\end{equation*}%
Consequently, we need to find the following number 
\begin{equation}
\lambda _{1}=\inf \left\{ \frac{\left\Vert \nabla u\right\Vert _{p}^{p}}{%
\underset{\Omega }{\int }\left\vert u\right\vert ^{p_{0}}\left\vert \nabla
u\right\vert ^{p_{1}}dx}\left\vert \ u\in B_{1}^{\overset{0}{W}\
^{1.p}}\left( 0\right) \right. \right\} .  \label{3.3a}
\end{equation}

It is clear that $\lambda _{1}$ exists and $\lambda _{1}>0$.

Whence follows 
\begin{equation}
\lambda _{1}\geq \left( \frac{\left\Vert \nabla u\right\Vert _{p}}{%
\left\Vert u\right\Vert _{p}}\right) ^{p_{0}}\Longrightarrow \lambda _{1}^{%
\frac{1}{p_{0}}}\geq \frac{\left\Vert \nabla u\right\Vert _{p}}{\left\Vert
u\right\Vert _{p}}  \label{3.3}
\end{equation}%
for any $u\in W_{0}^{1.p}\left( \Omega \right) $, $u\left( x\right) \neq 0$
that is assumes in what follows.

We denote by $\lambda _{p_{0},p_{1}}$ the first spectrum of the posed
problem that one can define as 
\begin{equation}
\lambda _{p_{0},p_{1}}=\inf \left\{ \left\Vert \nabla u\right\Vert _{p}\left[
\underset{\Omega }{\int }\left\vert u\right\vert ^{p_{0}}\left\vert \nabla
u\right\vert ^{p_{1}}dx\right] ^{-\frac{1}{p}}\left\vert \ u\in
S_{1}^{W_{0}^{1.p}\left( \Omega \right) }\left( 0\right) \right. \right\} .
\label{3.4}
\end{equation}%
From (\ref{3.3}) we obtain 
\begin{equation*}
\lambda _{p_{0},p_{1}}\geq \lambda _{1}^{\frac{1}{p_{0}}}=\inf \left\{ \frac{%
\left\Vert \nabla u\right\Vert _{p}}{\left\Vert u\right\Vert _{p}}\
\left\vert \ u\in W_{0}^{1.p}\left( \Omega \right) \right. \right\}
\end{equation*}%
that is well-known $\lambda _{1}^{\frac{1}{p_{0}}}=\lambda _{1}\left(
-\Delta _{p}\right) $ was defined as the first spectrum of $p-$Laplacian
(see, e.g. \cite{23}) that is

\begin{equation*}
\lambda _{1}\left( -\Delta _{p}\right) =\inf \ \left\{ \frac{\left\Vert
\nabla u\right\Vert _{p}}{\left\Vert u\right\Vert _{p}}\ \left\vert \ u\in
W_{0}^{1.p}\left( \Omega \right) \right. \right\} ,
\end{equation*}%
consequently, this inequality shows that $\lambda _{p_{0},p_{1}}$ is
comparable with the spectrum $\lambda _{1}\left( -\Delta _{p}\right) $ of
the $p-$Laplacian, i.e. $\lambda _{p_{0},p_{1}}$ satisfy the inequality $%
\lambda _{p_{0},p_{1}}\geq $ $\lambda _{1}\left( -\Delta _{p}\right) $.

(2) Now we will consider the problem (\ref{3.2}) then it is getting 
\begin{equation*}
\underset{\Omega }{\int }\left\vert u\right\vert ^{p-2}\left\vert \nabla
u\right\vert ^{2}dx=\lambda \underset{\Omega }{\int }\left\vert u\right\vert
^{p_{0}}dx
\end{equation*}%
or 
\begin{equation*}
\frac{4}{p^{2}}\left\Vert \nabla \left( \left\vert u\right\vert ^{\frac{p-2}{%
2}}u\right) \right\Vert _{2}^{2}=\lambda \left\Vert \left\vert u\right\vert
^{\frac{p_{0}-2}{2}}u\right\Vert _{2}^{2},
\end{equation*}%
here if assume $p_{0}=p$ and $\left\vert u\right\vert ^{\frac{p-2}{2}%
}u\equiv v$ then we get 
\begin{equation*}
\frac{4}{p^{2}}\left\Vert \nabla v\right\Vert _{2}^{2}=\frac{4}{p^{2}}%
\left\Vert \nabla \left( \left\vert u\right\vert ^{\frac{p-2}{2}}u\right)
\right\Vert _{2}^{2}=\lambda \left\Vert \left\vert u\right\vert ^{\frac{p-2}{%
2}}u\right\Vert _{2}^{2}=\lambda \left\Vert v\right\Vert _{2}^{2}.
\end{equation*}

Whence follows, that the first eigenvalue $\lambda _{1}\left( p\right) $ of
the operator $-\nabla \cdot \left( \left\vert u\right\vert ^{p-2}\nabla
u\right) $ relative to operator $\left\vert u\right\vert ^{p-2}u$ can be
defined using the first eigenvalue $\lambda _{1}\left( -\Delta \right) $ of
the Laplacian that is defined by expression 
\begin{equation*}
\lambda _{1}\left( -\Delta \right) =\inf \ \left\{ \frac{\left\Vert \nabla
v\right\Vert _{2}}{\left\Vert v\right\Vert _{2}}\ \left\vert \ v\in
W_{0}^{1.2}\left( \Omega \right) \right. \right\} .
\end{equation*}%
Consequently, the first eigenvalue $\lambda _{1}\left( p\right) $ of the
operator $-\nabla \cdot \left( \left\vert u\right\vert ^{p-2}\nabla u\right) 
$ relative to operator $\left\vert u\right\vert ^{p-2}u$ (in appropriate to
problem (\ref{3.2})) one can define by the equality $\lambda _{1}\left(
p\right) =\left( \frac{2}{p}\lambda _{1}\left( -\Delta \right) \right) ^{2}$.

Thus we get

\begin{proposition}
\label{Pr_1} (1) Let $\Delta _{p}$ be the $p-$Laplacian operator with
homogeneous boundary conditions on the bounded domain of $R^{n}$ with smooth
boundary $\partial \Omega $ and $p_{0}+p_{1}=p$. Then the first eigenvalue $%
\lambda _{p_{0}p_{1}}$ for the operator $-\Delta _{p}$ relative to operator $%
G:G\left( u\right) \equiv \left\vert u\right\vert ^{p_{0}-2}u\left\vert
\nabla u\right\vert ^{p_{1}}$ exists and be defined with equality (\ref{3.4}%
).

(2) If $F$ and $G$ be operators are generated by the problem (\ref{3.2}) and 
$p_{0}=p$, then the first eigenvalue of the operator $F$ relative to
operator $G$ is determined as $\lambda _{1}\left( p\right) =\left( \frac{2}{p%
}\lambda _{1}\left( -\Delta \right) \right) ^{2}$, where $\lambda _{1}\left(
-\Delta \right) $ is the first eigenvalue of the Laplacian.
\end{proposition}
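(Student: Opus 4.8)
The two parts call for different methods: part (1) is an existence-and-characterization statement resting on the homogeneity machinery developed before Remark \ref{R_1}, while part (2) is an explicit reduction to the linear Laplacian. For part (1) the first step is to confirm that $F$ and $G$ are positively homogeneous of one and the same degree $p-1$. Indeed $F(\mu u)=-\nabla\circ\big(|\mu\nabla u|^{p-2}\mu\nabla u\big)=\mu^{p-1}F(u)$, while $G(\mu u)=|\mu u|^{p_0-2}(\mu u)\,|\mu\nabla u|^{p_1}=\mu^{(p_0-1)+p_1}G(u)=\mu^{p-1}G(u)$ because $p_0+p_1=p$. This is exactly case (1) of the discussion preceding Remark \ref{R_1}, so the quotient $\langle F(u),u\rangle/\langle G(u),u\rangle$ is invariant under $u\mapsto\mu u$; hence the infimum in (\ref{2.5}) defines a number independent of the representative, and it is legitimate to call it a first eigenvalue. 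Pairing (\ref{3.1}) with $u$ reduces this quotient to the concrete form (\ref{3.3a}) already computed in the text, and extracting the $p$-th root yields (\ref{3.4}).

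The second step for part (1) is to show the infimum is a genuine positive number. The essential point is boundedness below away from zero, which I would obtain by comparison with the $p$-Laplacian Rayleigh quotient: inequality (\ref{3.3}), itself a consequence of H\"older's inequality and the embedding $W_0^{1,p}(\Omega)\hookrightarrow L^p(\Omega)$, gives $\lambda_{p_0,p_1}\ge\lambda_1(-\Delta_p)>0$. To upgrade the infimum to an actual solution of $F(u)=\lambda_{p_0,p_1}G(u)$ I would invoke Corollary \ref{C_2}: condition 1) follows from continuity and the homogeneous growth bounds, the coercivity condition follows from the lower bound just established, and the separation condition follows from the strict monotonicity of $-\Delta_p$ combined with the compact embedding of $W_0^{1,p}(\Omega)$ into $L^p(\Omega)$, which supplies the auxiliary space $Z$ and the lower-order term in that hypothesis.

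For part (2) the argument is the explicit substitution indicated in the text. With $p_0=p$ and $v=|u|^{(p-2)/2}u$ the chain rule gives $\nabla v=\tfrac{p}{2}|u|^{(p-2)/2}\nabla u$, so that $|\nabla v|^2=\tfrac{p^2}{4}|u|^{p-2}|\nabla u|^2$ and $|v|^2=|u|^p$. Testing (\ref{3.2}) against $u$ and substituting turns the eigenvalue relation into $\tfrac{4}{p^2}\|\nabla v\|_2^2=\lambda\|v\|_2^2$, i.e.\ $\lambda=\tfrac{4}{p^2}\|\nabla v\|_2^2/\|v\|_2^2$. It then remains to observe that $u\mapsto v$ is a bijection of $W_0^{1,p}(\Omega)$ onto $W_0^{1,2}(\Omega)$, with inverse $v\mapsto|v|^{(2-p)/p}v$, so that the infimum over admissible $u$ equals the infimum over all $v\in W_0^{1,2}(\Omega)$; the latter is $\lambda_1(-\Delta)^2$ by definition, whence $\lambda_1(p)=\tfrac{4}{p^2}\lambda_1(-\Delta)^2=\big(\tfrac{2}{p}\lambda_1(-\Delta)\big)^2$.

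The step I expect to be the main obstacle is the rigorous justification of the substitution in part (2): one must check that $v=|u|^{(p-2)/2}u$ really lies in $W_0^{1,2}(\Omega)$ with the claimed gradient identity, which amounts to the integrability of $|u|^{p-2}|\nabla u|^2$, and that the map is onto, since only then does the infimum over $u$ coincide with the Laplacian Rayleigh quotient. In part (1) the corresponding delicate point is attainment of the infimum; this is precisely where the compact embedding enters through the separation hypothesis of Corollary \ref{C_2}, and without it one secures only the value $\lambda_{p_0,p_1}$ and not an eigenfunction realizing it.
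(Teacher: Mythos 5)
Your part (1) follows the paper's own route essentially verbatim: common homogeneity of degree $p-1$, pairing (\ref{3.1}) with $u$ to arrive at (\ref{3.3a}) and (\ref{3.4}), positivity and the comparison with $\lambda _{1}\left( -\Delta _{p}\right) $ via H\"{o}lder's inequality, and Corollary \ref{C_2} for existence; your verification of the hypotheses of Corollary \ref{C_2} is at the same level of detail as the text's. Part (2) also begins the same way (pairing (\ref{3.2}) with $u$ and substituting $v=\left\vert u\right\vert ^{(p-2)/2}u$), but your justification of the final step contains a genuine error. The map $T\left( u\right) =\left\vert u\right\vert ^{(p-2)/2}u$ is \emph{not} a bijection of $W_{0}^{1,p}\left( \Omega \right) $ onto $W_{0}^{1,2}\left( \Omega \right) $ when $p>2$. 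It is injective, and your inverse formula is algebraically correct, but $T^{-1}\left( v\right) =\left\vert v\right\vert ^{(2-p)/p}v$ does not map $W_{0}^{1,2}$ into $W_{0}^{1,p}$: since $\nabla \left( \left\vert v\right\vert ^{(2-p)/p}v\right) =\frac{2}{p}\left\vert v\right\vert ^{(2-p)/p}\nabla v$, one has $\int_{\Omega }\left\vert \nabla T^{-1}\left( v\right) \right\vert ^{p}dx=\left( 2/p\right) ^{p}\int_{\Omega }\left\vert v\right\vert ^{2-p}\left\vert \nabla v\right\vert ^{p}dx$, and this diverges for many $v$ --- in particular for the first Dirichlet eigenfunction $v_{1}$ itself whenever $p\geq 3$, because by Hopf's lemma $v_{1}\left( x\right) \sim d\left( x,\partial \Omega \right) $ with $\left\vert \nabla v_{1}\right\vert $ bounded away from zero near $\partial \Omega $, so that $\int_{\Omega }v_{1}^{2-p}\left\vert \nabla v_{1}\right\vert ^{p}dx\sim \int d^{2-p}dx=\infty $. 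Hence $T\left( W_{0}^{1,p}\right) $ is a proper subset of $W_{0}^{1,2}$, and your argument yields only the one-sided bound $\lambda _{1}\left( p\right) \geq \left( \frac{2}{p}\lambda _{1}\left( -\Delta \right) \right) ^{2}$.

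What is actually needed --- and what the paper itself silently assumes without proof --- is that the infimum of the Rayleigh quotient over the image $T\left( W_{0}^{1,p}\right) $ still equals $\lambda _{1}\left( -\Delta \right) ^{2}$. This can be repaired by approximation rather than surjectivity: take $v_{\delta }=v_{1}\eta _{\delta }^{\beta }$ with $\beta \geq p/2$, where $\eta _{\delta }$ is a cutoff vanishing on $\left\{ d<\delta \right\} $, equal to $1$ on $\left\{ d>2\delta \right\} $, with $\left\vert \nabla \eta _{\delta }\right\vert \leq C/\delta $. Then $u_{\delta }=v_{\delta }^{2/p}=v_{1}^{2/p}\eta _{\delta }^{2\beta /p}$ is Lipschitz with compact support (the exponent $2\beta /p\geq 1$ removes the gradient blow-up at the zero set), so $v_{\delta }\in T\left( W_{0}^{1,p}\right) $; moreover $\left\Vert v_{\delta }\right\Vert _{2}\rightarrow \left\Vert v_{1}\right\Vert _{2}$ and $\left\Vert \nabla v_{\delta }\right\Vert _{2}\rightarrow \left\Vert \nabla v_{1}\right\Vert _{2}$ as $\delta \rightarrow 0$, because $v_{1}\left\vert \nabla \eta _{\delta }^{\beta }\right\vert $ stays bounded on the transition shell whose measure tends to zero. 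Thus the infimum over the image equals $\lambda _{1}\left( -\Delta \right) ^{2}$, and it is attained inside the image only for $2<p<3$ (when $v_{1}\in T\left( W_{0}^{1,p}\right) $). You correctly flagged the substitution as the delicate point, but the surjectivity claim you used to dispose of it is false, and without a substitute argument of the above kind the proof of part (2) is incomplete.
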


\begin{remark}
\label{R_2} It should be noted by the same way one can define the spectrum
of operator $\Delta _{p}$ (and also of the operator $F:F\left( u\right)
\equiv \underset{i=1}{\overset{n}{\sum }}D_{i}\left( \left\vert
D_{i}u\right\vert ^{p-2}D_{i}u\right) $) relative to operator $%
G_{0}:G_{0}\left( u\right) \equiv \underset{i=1}{\overset{n}{\sum }}%
\left\vert u\right\vert ^{p_{0}-2}u\left\vert D_{i}u\right\vert ^{p_{1}}$.
\end{remark}

Now, using the previous results we will investigate the solvability of the
problem on the open bounded domain $\Omega \subset R^{n}$ with smooth
boundary $\partial \Omega $ with the following equation whit the homogeneous
boundary condition 
\begin{equation}
f_{\lambda }\left( u\right) \equiv -\nabla \left( \left\vert \nabla
u\right\vert ^{p-2}\nabla u\right) -\lambda \left\vert u\right\vert
^{p_{0}-2}u\left\vert \nabla u\right\vert ^{p_{1}}=h\left( x\right)
\label{3.5}
\end{equation}%
For this problem the following result holds.

\begin{theorem}
\label{Th_4} Let numbers $p$, $p_{0}$, $p_{1}\geq 0$ are such that $%
p_{0}+p_{1}=p\geq 2$, $\lambda _{p_{0},p_{1}}$ is the number defined in (\ref%
{3.4}). Then if $\lambda <\lambda _{p_{0},p_{1}}$ then the posed problem is
solvable in $W_{0}^{1.p}\left( \Omega \right) $ for each $h\in
W^{-1,q}\left( \Omega \right) $.
\end{theorem}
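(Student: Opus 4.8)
The plan is to realize equation \eqref{3.5} as $f_\lambda(u)=h$ with $F(u)=-\nabla\cdot\bigl(|\nabla u|^{p-2}\nabla u\bigr)$ the $p$-Laplacian and $G(u)=|u|^{p_0-2}u\,|\nabla u|^{p_1}$, both acting from $X=W_0^{1,p}(\Omega)$ into $Y=W^{-1,q}(\Omega)$, $q=p/(p-1)$, and to apply Theorem \ref{Th_3} (which rests on Theorem \ref{Th_1}) with the test map $g(u)=u$; here $Y^\ast=W_0^{1,p}(\Omega)$ by reflexivity, so $\langle f_\lambda(u),g(u)\rangle$ is the natural pairing. A structural remark to record first is that, because $p_0+p_1=p$, both $F$ and $G$ are positively homogeneous of degree $p-1$, so $f_\lambda(\mu u)=\mu^{p-1}f_\lambda(u)$ for $\mu>0$; this will be used at the end to pass from an image ball to all of $Y$.

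First I would verify condition 1: $F$ is the standard bounded continuous (monotone) $p$-Laplacian, and $G$ is bounded continuous from $X$ to $Y$ by H\"older together with the Sobolev embeddings, the exponents balancing precisely because $p_0+p_1=p$; the relation $F\succ G$ holds since finiteness of $\|F(u)\|$ forces $u\in W_0^{1,p}$, on which $G(u)$ is finite. The clean part is condition 2 (coercivity), which is where the hypothesis on $\lambda$ enters. Pairing with $u$ gives
\[
\langle f_\lambda(u),u\rangle=\|\nabla u\|_p^{p}-\lambda\int_\Omega|u|^{p_0}|\nabla u|^{p_1}\,dx,
\]
and the definition \eqref{3.4} (equivalently the critical ratio \eqref{3.3a}) yields $\int_\Omega|u|^{p_0}|\nabla u|^{p_1}\,dx\le\lambda_{p_0,p_1}^{-p}\|\nabla u\|_p^{p}$. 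Hence $\langle f_\lambda(u),u\rangle\ge c_\lambda\|\nabla u\|_p^{p}$ with $c_\lambda=1-\lambda\,\lambda_{p_0,p_1}^{-p}$, which is strictly positive exactly on the admissible range of $\lambda$ (for $\lambda\le0$ the second term only helps). Normalising by $\|u\|_X=\|\nabla u\|_p$ gives the required bound with $\nu_\lambda(r)=c_\lambda r^{p-1}$, a nondecreasing function tending to $+\infty$, so condition 2 holds with $\delta_0=\delta_0(\lambda)$ arbitrarily large on large balls.

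The main obstacle is condition 3, i.e.\ closedness/``bodiliness'' of the image, because $G$ depends on the gradient and so is \emph{not} a compact lower-order perturbation in the naive sense. The leading term $\varphi$ comes from the strong monotonicity of the principal part: for $p\ge2$ one has $\langle F(u_1)-F(u_2),u_1-u_2\rangle\ge c_p\|\nabla(u_1-u_2)\|_p^{p}$, whence $\|F(u_1)-F(u_2)\|_Y\ge c_p\|u_1-u_2\|_X^{p-1}$. For the perturbation I would split $G(u_1)-G(u_2)=|u_1|^{p_0-2}u_1\bigl(|\nabla u_1|^{p_1}-|\nabla u_2|^{p_1}\bigr)+\bigl(|u_1|^{p_0-2}u_1-|u_2|^{p_0-2}u_2\bigr)|\nabla u_2|^{p_1}$. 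The second summand is controlled, via H\"older and the continuity of the Nemytskii map $u\mapsto|u|^{p_0-2}u$, by a weaker (compactly embedded) norm $\|u_1-u_2\|_Z$, furnishing the $\psi_\lambda$ term; the positive gap $p_0=p-p_1>0$ leaves the room in the exponents. When $p_1=0$ the whole perturbation $G(u)=|u|^{p-2}u$ is genuinely compact and condition 3 is routine, so the real content is $p_1>0$: the first summand carries the gradient difference $|\nabla u_1|^{p_1}-|\nabla u_2|^{p_1}$, which cannot be dominated by any compact norm. This is exactly the Leray--Lions point, and I would not treat it by a crude triangle inequality (for $p>2$ that would be swamped by the $\|u_1-u_2\|_X^{p-1}$ term); instead I would use the Minty/monotonicity argument — extracting strong convergence of the gradients along an approximating sequence from the equation itself and then passing to the limit in $G$ — to conclude that $\Re(f_\lambda)$ is closed (equivalently, one invokes the \textrm{P}-property of Notation \ref{N_1} through Lemmas \ref{L_1}--\ref{L_2}).

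Granting condition 3, Theorem \ref{Th_3} gives $B_{\delta_0}^Y(0)\subseteq f_\lambda\bigl(B_{r_0}^X(0)\bigr)$, and the degree-$(p-1)$ homogeneity upgrades this to surjectivity onto all of $W^{-1,q}(\Omega)$: for arbitrary $h\in W^{-1,q}(\Omega)$ choose $\mu>0$ with $\mu^{-(p-1)}h\in B_{\delta_0}^Y(0)$, solve $f_\lambda(v)=\mu^{-(p-1)}h$ with $v\in B_{r_0}^X(0)$, and set $u=\mu v$, so that $f_\lambda(u)=\mu^{p-1}f_\lambda(v)=h$. This produces a solution $u\in W_0^{1,p}(\Omega)$ for every right-hand side $h$, as claimed. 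I expect the gradient-dependent summand in condition 3 (the strong-gradient-convergence step) to be the only genuinely hard part; everything else is either the spectral coercivity bound or bookkeeping with Sobolev exponents and homogeneity.
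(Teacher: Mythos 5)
Your overall skeleton is the paper's: write (\ref{3.5}) as $f_{\lambda }=F-\lambda G$ with the same $F$, $G$, test with $g\equiv Id$, get coercivity from the spectral constant, and settle closedness of the image so that the general machinery (Theorem \ref{Th_1} via Corollary \ref{C_1}/Theorem \ref{Th_3}) applies. Where you genuinely diverge is the closedness step. The paper does precisely the ``crude'' estimate you reject: it splits $G(u)-G(v)$ exactly as you do, bounds the gradient-difference summand pointwise by $c_{1}\left\vert u\right\vert ^{p_{0}-1}\left\vert \nabla \widetilde{u}\right\vert ^{p_{1}-1}\left\vert \nabla (u-v)\right\vert $, and absorbs it by Young's inequality as $\varepsilon \left\Vert \nabla (u-v)\right\Vert _{p}^{p}+C(\varepsilon )(\cdots )\left\Vert u-v\right\Vert _{p}^{p^{\prime }}$ into the strong-monotonicity term $c_{0}\left\Vert \nabla (u-v)\right\Vert _{p}^{p}$; what survives is a compactly-embedded-norm remainder, which is exactly the signed $\psi $-term that the form (\ref{2.7}) of Corollary \ref{C_1} is designed to accommodate (with $Z=L^{p}(\Omega )$). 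Your ``swamping'' objection applies to a linear bound on $\left\Vert G(u_{1})-G(u_{2})\right\Vert _{Y}$ compared against $\left\Vert u_{1}-u_{2}\right\Vert _{X}^{p-1}$; the paper never makes that comparison, because the absorption happens at the level of the duality pairing, where both competing terms carry $p$-th powers. Your Minty/monotonicity alternative (strong gradient convergence extracted from the equation, then the \textrm{P}-property through Lemmas \ref{L_1}--\ref{L_2}) is a valid, indeed more standard, substitute, and your degree-$(p-1)$ homogeneity scaling to pass from $B_{\delta _{0}}^{Y}(0)$ to all of $W^{-1,q}(\Omega )$ is a clean replacement for the paper's implicit use of coercivity on balls of growing radius. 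So: same architecture, a different engine in the one hard step; both engines run.

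There is, however, a genuine gap in your coercivity step, and your own (correct) computation exposes it. From (\ref{3.4}) one gets $\int_{\Omega }\left\vert u\right\vert ^{p_{0}}\left\vert \nabla u\right\vert ^{p_{1}}dx\leq \lambda _{p_{0},p_{1}}^{-p}\left\Vert \nabla u\right\Vert _{p}^{p}$, hence $\left\langle f_{\lambda }(u),u\right\rangle \geq (1-\lambda \,\lambda _{p_{0},p_{1}}^{-p})\left\Vert \nabla u\right\Vert _{p}^{p}$, and this constant is positive iff $\lambda <\lambda _{p_{0},p_{1}}^{p}$ --- not iff $\lambda <\lambda _{p_{0},p_{1}}$, as you assert. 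Since $p\geq 2$, the stated hypothesis implies your coercivity only when $\lambda _{p_{0},p_{1}}\geq 1$; when $\lambda _{p_{0},p_{1}}<1$ (possible, e.g., on large domains, where the underlying Poincar\'{e} constant is small) there are values $\lambda _{p_{0},p_{1}}^{p}<\lambda <\lambda _{p_{0},p_{1}}$ for which your bound yields nothing, so your argument does not cover the theorem's full stated range. For comparison, the paper hides the same discrepancy inside a false algebraic identity: its chain $\left\Vert \nabla u\right\Vert _{p}^{p_{1}}\left( \left\Vert \nabla u\right\Vert _{p}^{p_{0}}-\lambda \left\Vert u\right\Vert _{p}^{p_{0}}\right) =\left\Vert \nabla u\right\Vert _{p}^{p}\left( 1-\lambda /\lambda _{p_{0},p_{1}}\right) $ would be legitimate only if $\lambda _{p_{0},p_{1}}$ were defined without the exponent $-\frac{1}{p}$ in (\ref{3.4}), i.e., if it were the quantity $\lambda _{1}$ of (\ref{3.3a}), which equals $\lambda _{p_{0},p_{1}}^{p}$. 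So your derivation is the careful one, but the word ``exactly'' is wrong: either prove the statement under the hypothesis $\lambda <\lambda _{p_{0},p_{1}}^{p}$, or note explicitly the normalization mismatch between (\ref{3.4}) and the theorem as stated.
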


\begin{proof}
Let us 
\begin{equation*}
f_{\lambda }\left( u\right) \equiv -\nabla \left( \left\vert \nabla
u\right\vert ^{p-2}\nabla u\right) -\lambda \left\vert u\right\vert
^{p_{0}-2}u\left\vert \nabla u\right\vert ^{p_{1}}
\end{equation*}%
be the operator generated by the posed problem for (\ref{3.5}) acting as $%
f_{\lambda }:X\longrightarrow Y$, where $X\equiv W_{0}^{1.p}\left( \Omega
\right) $, $Y\equiv W^{-1,q}\left( \Omega \right) $ and fulfill the above
conditions. We will use the Corollary \ref{C_1} for the proof of solvability
of this problem.

Whence follows that the inequality \ 
\begin{equation*}
\left\langle f_{\lambda }\left( u\right) ,u\right\rangle \equiv \left\Vert
\nabla u\right\Vert _{p}^{p}-\lambda \underset{\Omega }{\int }\left\vert
u\right\vert ^{p_{0}}\left\vert \nabla u\right\vert ^{p_{1}}dx\geq
\end{equation*}%
\begin{equation*}
\left\Vert \nabla u\right\Vert _{p}^{p}-\lambda \left\Vert u\right\Vert
_{p}^{p_{0}}\left\Vert \nabla u\right\Vert _{p}^{p_{1}}=\left\Vert \nabla
u\right\Vert _{p}^{p_{1}}\left( \left\Vert \nabla u\right\Vert
_{p}^{p_{0}}-\lambda \left\Vert u\right\Vert _{p}^{p_{0}}\right) =
\end{equation*}%
\begin{equation*}
\left\Vert \nabla u\right\Vert _{p}^{p}\left( 1-\frac{\lambda }{\lambda
_{p_{0},p_{1}}}\right) =\lambda _{p_{0},p_{1}}^{-1}\left( \lambda
_{p_{0},p_{1}}-\lambda \right) \left\Vert \nabla u\right\Vert _{p}^{p}
\end{equation*}%
holds for any $u\in W_{0}^{1.p}\left( \Omega \right) $ under conditions of
Theorem \ref{Th_4}.

Consequently, if $\lambda <\lambda _{p_{0},p_{1}}$ then $f_{\lambda }$
satisfies the condition \textit{(ii)} of the Theorem \ref{Th_1}, moreover it
is fulfilled for $x_{0}=0$ and $g\equiv Id$. The realization of the
condition (i) of Theorem \ref{Th_1} for $f_{\lambda }$ is obvious.

The following inequalities show the fulfillment of condition \textit{(iii) }%
of the Theorem \ref{Th_1} (Corollary \ref{C_1}) for this problem. Isn't
difficult to see 
\begin{equation*}
\left\langle f\left( u\right) -f\left( v\right) ,u-v\right\rangle \equiv
\left\langle \left( \left\vert \nabla u\right\vert ^{p-2}\nabla u-\left\vert
\nabla v\right\vert ^{p-2}\nabla v\right) ,\nabla \left( u-v\right)
\right\rangle -
\end{equation*}%
\begin{equation*}
\lambda \left\langle \left( \left\vert u\right\vert ^{p_{0}-2}\left\vert
\nabla u\right\vert ^{p_{1}}u-\left\vert v\right\vert ^{p_{0}-2}\left\vert
\nabla v\right\vert ^{p_{1}}v\right) ,u-v\right\rangle \geq c_{0}\left\Vert
\nabla \left( u-v\right) \right\Vert _{p}^{p}-
\end{equation*}%
\begin{equation*}
\lambda \left\langle \left\vert u\right\vert ^{p_{0}-2}u\left( \left\vert
\nabla u\right\vert ^{p_{1}}-\left\vert \nabla v\right\vert ^{p_{1}}\right)
,u-v\right\rangle -\lambda \left\langle \left\vert \nabla v\right\vert
^{p_{1}}\left( \left\vert u\right\vert ^{p_{0}-2}u-\left\vert v\right\vert
^{p_{0}-2}v\right) ,u-v\right\rangle
\end{equation*}%
hold for any $u,v\in W_{0}^{1.p}\left( \Omega \right) $. Here the second
term of the right side one can estimate as 
\begin{equation*}
\left\vert \left\langle \left\vert u\right\vert ^{p_{0}-2}u\left( \left\vert
\nabla u\right\vert ^{p_{1}}-\left\vert \nabla v\right\vert ^{p_{1}}\right)
,u-v\right\rangle \right\vert \leq c_{1}\left\langle \left\vert u\right\vert
^{p_{0}-1}\left\vert \nabla \widetilde{u}\right\vert ^{p_{1}-1}\left\vert
\nabla u-\nabla v\right\vert ,\left\vert u-v\right\vert \right\rangle \leq
\end{equation*}%
\begin{equation*}
\varepsilon \left\Vert \nabla \left( u-v\right) \right\Vert _{p}^{p}+C\left(
\varepsilon \right) \left\Vert u\right\Vert _{p}^{\left( p_{0}-1\right)
p^{\prime }}\left\Vert \nabla \widetilde{u}\right\Vert _{p}^{\left(
p_{1}-1\right) p^{\prime }}\left\Vert u-v\right\Vert _{p}^{p^{\prime }},\
p^{\prime }=\frac{p}{p-1}.
\end{equation*}%
Thus we get that all of the conditions of Theorem \ref{Th_1} (the case of
Corollary \ref{C_1}) are fulfilled for the problem (\ref{3.5}).
Consequently, applying Theorem \ref{Th_1} we get the correctness of Theorem %
\ref{Th_4}.
\end{proof}

\section{Fully Nonlinear Operator}

Now we will study the question on the existence of the spectrum of the fully
nonlinear operator. Let $X,Y,Z$ are the real Banach spaces, the inclusion $%
Y\subset Z^{\ast }$is continuous and dense, where $Z^{\ast }$ is the dual
space of $Z$. Let $L:D\left( L\right) \subseteq X\longrightarrow Y$ is the
linear operator, $D\left( L\right) $ is dense in $X$; $f:D\left( f\right)
\subseteq Y\longrightarrow Z$ and $g:X\subseteq D\left( f\right)
\longrightarrow Z$ are nonlinear operators. Assume $L\left( D\left( L\right)
\right) \subseteq $ $D\left( f\right) $.

We wish to define the spectrum of the operator $f\circ L:D\left( L\right)
\subseteq X\longrightarrow Z$ with respect to the operator $g$, where $%
D\left( L\right) \subseteq D\left( g\right) $. (We should to noted the
nonlinearity of operator $g$ depends on the nature of the nonlinearity of
the operator $f$.)

We will consider problem

\begin{equation}
f_{\lambda }\left( x\right) \equiv f\left( Lx\right) -\lambda g\left(
x\right) =h,\quad h\in Z,\   \label{4.1a}
\end{equation}%
where $\lambda \in \mathbb{%
%TCIMACRO{\U{211d} }%
%BeginExpansion
\mathbb{R}
%EndExpansion
}$ is a parameter and $h$ is an element of $Z$.

We will investigate the existence of the spectrum of the operator $f\circ L$
relative to operator $g$, and also the solvability of the equation (\ref%
{4.1a}) with a parameter.

We will call a $\lambda \in 
%TCIMACRO{\U{2102} }%
%BeginExpansion
\mathbb{C}
%EndExpansion
$ the spectrum of the nonlinear operator if it characterizes the examined
operator similarly to the linear operator theory according to Definition \ref%
{D_S}. Whence follows that necessary to assume the identical homogeneity of
the nonlinearities of operators $f$ and $g$ according to the above
explanations.

So, we will study the following particular case that can explain the general
case. We will use the general results of articles \cite{29, 30} to
investigate posed problems. Let $B_{r_{0}}^{X}\left( 0\right) \subset
D\left( L\right) $, $r_{0}>0$.

Consider the following conditions:

1) There are such constants $c_{1},c_{2}>0$ that $c_{1}\left\Vert
x\right\Vert _{X}\geq \left\Vert Lx\right\Vert _{Y}\geq c_{2}\left\Vert
x\right\Vert _{X}$ for any $x\in D\left( L\right) \subseteq X$ moreover, $X$
and $Y$ be reflexive spaces and the inverse to $L$ is a compact operator;

2) The operator $f\circ L$ is greater than the operator $g$, i.e. $f\circ
L\succ g$ ; $f$ and $g$ as the functions are continuous, and satisfy the
following conditions: $f\left( t\right) \cdot t>0$ for $\forall t\in
R\setminus \left\{ 0\right\} $; $f\left( 0\right) =0$, $g\left( 0\right) =0$;

3) There is such number $\lambda _{0}>0$ that for each $z^{\ast }\in
S_{1}^{Z^{\ast }}\left( 0\right) $ there exist such $x\left( z^{\ast
}\right) \in S_{r}^{X}\left( 0\right) $, $0\leq r\leq r_{0}$ that the
inequality 
\begin{equation*}
\left\langle f_{\lambda }\left( x\right) ,z^{\ast }\right\rangle \equiv
\left\langle f\left( Lx\right) -\lambda g\left( x\right) ,z^{\ast
}\right\rangle \geq \nu \left( \left\Vert Lx\right\Vert _{Y},\lambda \right)
\end{equation*}%
holds for $\forall \lambda :\left\vert \lambda \right\vert <\lambda _{0}$,
where $\nu :R_{+}\longmapsto R$ is a continuous function, and there exists $%
\delta _{0}\left( \lambda \right) >0$ such $\nu \left( t,\lambda \right)
\geq \delta _{0}\left( \lambda \right) $ that for $\forall t=\left\Vert
Lx\right\Vert _{Y}$ when the variable $x$ moves over sphere $%
S_{r_{0}}^{X}\left( 0\right) $;

4) There exist such $\varepsilon _{0}>0$ and a neighborhood $U_{\varepsilon
}\left( x\right) $ of a. e. $x\in B_{r_{0}}^{X}\left( 0\right) \subseteq X$
that the following inequalities \qquad\ 
\begin{equation*}
\left\langle f\left( Lx_{1}\right) -f\left( Lx_{2}\right)
,Lx_{1}-Lx_{2}\right\rangle \geq l\left( x_{1},x_{2}\right) \left\Vert
Lx_{1}-Lx_{2}\right\Vert _{Y}^{2},
\end{equation*}
\begin{equation*}
\left\Vert g\left( x_{1}\right) -g\left( x_{2}\right) \right\Vert _{Z}\leq
l_{1}\left( x_{1},x_{2}\right) \left\Vert x_{1}-x_{2}\right\Vert _{X}
\end{equation*}%
hold for $\forall x_{1},x_{2}\in U_{\varepsilon }\left( x\right) $, where $%
l\left( x_{1},x_{2}\right) >0$, $l_{1}\left( x_{1},x_{2}\right) >0$ be
functionals that are bounded in the sense, similar of the Definition \ref%
{D_1}.

\begin{theorem}
\label{Th_5} Let conditions 1-4 be fulfilled, $D\left( L\right) =X$, and $%
\lambda :\left\vert \lambda \right\vert \leq \lambda _{0}$. Then equation (%
\ref{4.1a}) solvable for $\forall h\in Z$ satisfies the condition: for any $%
z^{\ast }\in S_{1}^{Z^{\ast }}\left( 0\right) $ there exists such $x\left(
z^{\ast }\right) \in S_{r_{0}}^{X}\left( 0\right) $ that the following
relation 
\begin{equation*}
\left\vert \left\langle h,z^{\ast }\right\rangle \right\vert \leq
\left\langle f_{\lambda }\left( x\right) ,z^{\ast }\right\rangle ,
\end{equation*}%
holds, in particular, $B_{\delta _{0}\left( \lambda \right) }^{Z}\left(
0\right) \subseteq M\left( \lambda \right) $ (or (\ref{4.1a}) solvable for $%
\forall h\in M\left( \lambda \right) \subseteq Z$, where the subset $M\left(
\lambda \right) $ is determined by the following way: 
\begin{equation*}
M\left( \lambda \right) \equiv \left\{ z\in Z\left\vert \ \left\vert
\left\langle z,z^{\ast }\right\rangle \right\vert \leq \nu \left( \left\Vert
Lx\right\Vert _{Y},\lambda \right) ,\ \right. \forall z^{\ast }\in
S_{1}^{Z^{\ast }}\left( 0\right) ,\exists x\left( z^{\ast }\right) \in
S_{r_{0}}^{X}\left( 0\right) \right\} .\text{)}
\end{equation*}
\end{theorem}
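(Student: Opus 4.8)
The strategy is to reduce Theorem~\ref{Th_5} to a direct application of the general solvability Theorem~\ref{Th_1} (in fact its perturbed version, Corollary~\ref{C_1}) applied to the operator $f_\lambda = f\circ L - \lambda g$ acting from $X$ into $Z$ on the ball $B_{r_0}^X(0)$. Thus the whole proof amounts to verifying the three hypotheses (i), (ii), (iii) of Theorem~\ref{Th_1} for $f_\lambda$ under conditions 1--4, and then reading off the conclusion $B_{\delta_0(\lambda)}^Z(0)\subseteq M(\lambda)$ from the description of the set $M$ in the statement of Theorem~\ref{Th_1}.

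First I would check condition (i), the bounded continuity of $f_\lambda$. Continuity of $x\mapsto Lx$ is immediate from the two-sided bound in condition~1, continuity of $f$ and $g$ as maps is postulated in condition~2, and the composition $f\circ L$ together with the linear combination with $\lambda g$ preserves boundedness in the sense of Definition~\ref{D_1}; here the bound $\|Lx\|_Y\le c_1\|x\|_X$ from condition~1 is what transfers growth control from $Y$ back to $X$. Next, condition (ii) is essentially condition~3 rewritten: for each $z^\ast\in S_1^{Z^\ast}(0)$ one chooses the point $x(z^\ast)\in S_r^X(0)$ furnished by condition~3, so that the mapping $g$ of Theorem~\ref{Th_1} is realized by the correspondence $x\mapsto z^\ast$, and the coercivity inequality $\langle f_\lambda(x),z^\ast\rangle\ge \nu(\|Lx\|_Y,\lambda)\ge\delta_0(\lambda)$ gives exactly the lower bound $\nu_\lambda(\|x\|_X)$ required, using once more $\|Lx\|_Y\ge c_2\|x\|_X$ to express $\nu$ as a nondecreasing function of $\|x\|_X$. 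This is the step that pins down the role of $\lambda_0$: the bound must hold for all $|\lambda|<\lambda_0$ with $\delta_0(\lambda)>0$.

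The substantive step is condition (iii) (in its Corollary~\ref{C_1} form with a compact lower-order term $\psi$), which I would establish from condition~4. The plan is to split
\begin{equation*}
\|f_\lambda(x_1)-f_\lambda(x_2)\|_Z \ge \|f(Lx_1)-f(Lx_2)\|_Z - |\lambda|\,\|g(x_1)-g(x_2)\|_Z,
\end{equation*}
bound the principal part from below using the monotonicity-type estimate $\langle f(Lx_1)-f(Lx_2),Lx_1-Lx_2\rangle\ge l(x_1,x_2)\|Lx_1-Lx_2\|_Y^2$ together with $\|Lx_1-Lx_2\|_Y\ge c_2\|x_1-x_2\|_X$, and absorb the perturbation via the Lipschitz bound $\|g(x_1)-g(x_2)\|_Z\le l_1(x_1,x_2)\|x_1-x_2\|_X$. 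The compactness of the inverse $L^{-1}$ assumed in condition~1 is precisely what lets the $g$-term be treated as the compact lower-order contribution $\psi$ in \eqref{2.7}, since on the compactly embedded scale the $\|x_1-x_2\|$-norm controlling $g$ is weaker than the one controlling $f\circ L$.

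The main obstacle I anticipate is this last absorption: one needs $|\lambda|$ small enough (namely $|\lambda|\le\lambda_0$) so that the $\lambda g$ contribution does not swallow the positive lower bound coming from $l(x_1,x_2)$, and one must verify that the resulting expression genuinely has the structure $\Phi(\tau,x_0,\varepsilon)+\psi(\cdot)$ with $\Phi(\tau,\cdot)=0\iff\tau=0$, which requires the functionals $l,l_1$ to stay bounded (and $l$ bounded away from $0$) on each neighborhood $U_\varepsilon(x)$ as guaranteed by condition~4. Once all three conditions are in place, Theorem~\ref{Th_1} (via Corollary~\ref{C_1}) yields that $f_\lambda(B_{r_0}^X(0))$ contains the dense subset, and in fact the ball $B_{\delta_0(\lambda)}^Z(0)$, of the set $M(\lambda)$ as displayed; translating the abstract set $M$ of Theorem~\ref{Th_1} through the identifications $y\leftrightarrow z$, $\widetilde g(x)\leftrightarrow z^\ast$ gives exactly the described $M(\lambda)$, which completes the proof.
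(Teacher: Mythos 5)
Your proposal is correct and takes essentially the same approach as the paper: both reduce Theorem \ref{Th_5} to verifying conditions (i)--(iii) of Theorem \ref{Th_1} (in its Corollary \ref{C_1} form) for $F_{\lambda }=f\circ L-\lambda g$, the key step being the lower bound on $\left\Vert F_{\lambda }\left( x_{1}\right) -F_{\lambda }\left( x_{2}\right) \right\Vert _{Z}$ obtained by pairing the monotonicity estimate of condition 4 against $Lx_{1}-Lx_{2}\in Y\subset Z^{\ast }$ and subtracting the Lipschitz-controlled $g$-term, which is exactly the paper's inequality (\ref{4.4a}). The only difference is cosmetic (you apply the reverse triangle inequality before, rather than after, the duality/Cauchy--Schwarz step), and both arguments treat the final absorption of the $\lambda g$ perturbation into the lower-order term of Corollary \ref{C_1} with the same brevity.
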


\begin{proof}
For the proof, it is sufficient to show that the examined operator satisfies
all conditions of the general result of Subsection 2.1. It is clear operator 
$F_{\lambda }\left( x\right) \equiv f\left( Lx\right) -\lambda g\left(
x\right) $ satisfies conditions (\textit{i}), (\textit{ii}) of the general
results with the $x_{0}=0$, according to conditions 1-4 (since $F_{\lambda
}\left( 0\right) =0$). Then remains to show fulfill of condition \textit{%
(iii),} and for this sufficiently to investigate the following expression 
\begin{equation*}
\left\Vert F_{\lambda }\left( x_{1}\right) -F_{\lambda }\left( x_{2}\right)
\right\Vert _{Z}=\left\Vert \left( f\left( Lx_{1}\right) -\lambda g\left(
x_{1}\right) \right) -\left( f\left( Lx_{2}\right) -\lambda g\left(
x_{2}\right) \right) \right\Vert _{Z}.
\end{equation*}%
Now we will prove this expression satisfies the following inequality 
\begin{equation*}
\left\Vert F_{\lambda }\left( x_{1}\right) -F_{\lambda }\left( x_{2}\right)
\right\Vert _{Z}\geq c\left( l\left( x_{1},x_{2}\right) \left\Vert
x_{1}-x_{2}\right\Vert _{X},\lambda \right) -c_{1}\left( l_{1}\left(
x_{1},x_{2}\right) \left\Vert x_{1}-x_{2}\right\Vert _{X_{0}},\lambda
\right) .
\end{equation*}%
We set the following expression: 
\begin{equation*}
\left\langle F_{\lambda }\left( x_{1}\right) -F_{\lambda }\left(
x_{2}\right) ,Lx_{1}-Lx_{2}\right\rangle =\left\langle \left( f\left(
Lx_{1}\right) -f\left( Lx_{2}\right) \right) ,Lx_{1}-Lx_{2}\right\rangle -
\end{equation*}%
\begin{equation*}
-\lambda \left\langle g\left( x_{1}\right) -g\left( x_{2}\right)
,Lx_{1}-Lx_{2}\right\rangle
\end{equation*}%
that is defined correctly since $F:D\left( L\right) \subseteq
X\longrightarrow Z$ and $L:D\left( L\right) \subseteq X\longrightarrow
Y\subset Z^{\ast }$, whence by carrying out certain necessary operations and
considering the conditions of this section we get 
\begin{equation*}
\left\langle F_{\lambda }\left( x_{1}\right) -F_{\lambda }\left(
x_{2}\right) ,Lx_{1}-Lx_{2}\right\rangle =\left\langle f\left( Lx_{1}\right)
-f\left( Lx_{2}\right) ,Lx_{1}-Lx_{2}\right\rangle -
\end{equation*}%
\begin{equation*}
-\left\langle \lambda g\left( x_{1}\right) -\lambda g\left( x_{2}\right)
,Lx_{1}-Lx_{2}\right\rangle \geq l\left( x_{1},x_{2}\right) \left\Vert
Lx_{1}-Lx_{2}\right\Vert _{Y}^{2}-
\end{equation*}%
\begin{equation*}
-\left\vert \lambda \right\vert \left\Vert g\left( x_{1}\right) -g\left(
x_{2}\right) \right\Vert _{Z}\left\Vert L\left( x_{1}-x_{2}\right)
\right\Vert _{Y}\geq l\left( x_{1},x_{2}\right) \left\Vert L\left(
x_{1}-x_{2}\right) \right\Vert _{Y}^{2}-
\end{equation*}%
\begin{equation}
-\left\vert \lambda \right\vert \left\Vert g\left( x_{1}\right) -g\left(
x_{2}\right) \right\Vert _{Z}\left\Vert L\left( x_{1}-x_{2}\right)
\right\Vert _{Y}  \label{4.2a}
\end{equation}%
according to condition 1). Now, again of taking into account condition 1) we
arrive to 
\begin{equation}
\left\vert \left\langle F_{\lambda }\left( x_{1}\right) -F_{\lambda }\left(
x_{2}\right) ,Lx_{1}-Lx_{2}\right\rangle \right\vert \leq \left\Vert
F_{\lambda }\left( x_{1}\right) -F_{\lambda }\left( x_{2}\right) \right\Vert
_{Z}\cdot \left\Vert Lx_{1}-Lx_{2}\right\Vert _{Y}.  \label{4.3a}
\end{equation}%
Thus using inequalities (\ref{4.2a}) and (\ref{4.3a}) and condition 4 we get
the following estimate 
\begin{equation}
\left\Vert F_{\lambda }\left( x_{1}\right) -F_{\lambda }\left( x_{2}\right)
\right\Vert _{Z}\geq l\left( x_{1},x_{2}\right) \left\Vert L\left(
x_{1}-x_{2}\right) \right\Vert _{Y}-\left\vert \lambda \right\vert
\left\Vert g\left( x_{1}\right) -g\left( x_{2}\right) \right\Vert _{Z}\geq
\label{4.4a}
\end{equation}%
\begin{equation*}
l\left( x_{1},x_{2}\right) \left\Vert L\left( x_{1}-x_{2}\right) \right\Vert
_{Y}-\left\vert \lambda \right\vert l_{1}\left( x_{1},x_{2}\right)
\left\Vert x_{1}-x_{2}\right\Vert _{X}.
\end{equation*}

So, conditions (\textit{i})-(\textit{iii}) of the general theorem are
fulfilled under conditions of this theorem, and from the above inequality
follows the fulfillment of condition \textit{(iv)} that to ensure the
closedness of the image of $F\left( B_{r_{0}}^{X}\left( 0\right) \right) $.
Consequently, the correctness of Theorem \ref{Th_5} follows from the general
theorem.
\end{proof}

\begin{remark}
\label{R_5} It should be noted the defined in Theorem \ref{Th_5} subset $%
M\left( \lambda \right) $ will decrease by increases in the number $%
\left\vert \lambda \right\vert \nearrow \lambda _{0}$. Moreover, the
above-mentioned articles actually sought numbers of the type $\lambda _{0}$
the existence assumed in the previous theorem.
\end{remark}

Here we will investigate the discovery of such numbers that are independent
of elements of the domain. Then, the founded number in such a way can be
called the first eigenvalue of the examined operator relative to the
different operators as in Definition \ref{D_S}.

In what follow we will use some results from the article Berger \cite{6}
(see, also \cite{34}), therefore here we provide these results from article 
\cite{6}.

\begin{definition}
\label{D_B}(\cite{6})Let $A:X\longrightarrow X^{\ast }$ be a variational
operator. Then $A$ is of class $I$ if: \ 

(i) $A$ is bounded, i.e. $\left\Vert A\left( x\right) \right\Vert \leq \mu
\left( \left\Vert x\right\Vert \right) ;$

(ii) $A$ is continuous from the strong topology of $X$ to the weak topology
of $X^{\ast };$

(III) Oddness, i.e. $A\left( -x\right) =-A\left( x\right) $ ;

iv) Coerciveness, i.e. $\overset{1}{\underset{0}{\int }}\ \left\langle
A\left( sx\right) ,x\right\rangle ds\nearrow \infty $ when $\left\Vert
x\right\Vert _{X}\nearrow \infty $;

(V) Monotonicity: $\left\langle A\left( x_{1}\right) -A\left( x_{2}\right)
,x_{1}-x_{2}\right\rangle >0$, for any $x_{1},x_{2}\in X$.
\end{definition}

\begin{lemma}
\label{L_B1}(\cite{6})Let $A$ be as variational operator of class $I$, then
a $\partial A_{R}$ 
\begin{equation*}
\partial A_{R}=\left\{ x\in X\left\vert \ \overset{1}{\underset{0}{\int }}\
\left\langle A\left( sx\right) ,x\right\rangle ds=R\text{\/}\right. \right\}
\end{equation*}%
is a closed, bounded set in $X$. Furthermore $\left\Vert x\right\Vert
_{X}\geq k(R)>0$ and $\partial A_{R}$ is a weakly closed, bounded convex
set, where $k(R)$ is a constant independent of $x\in \partial A_{R}$.
\end{lemma}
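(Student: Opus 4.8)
The plan is to realise $\partial A_R$ as a level set of the potential functional of $A$ and to read off all the asserted properties from convexity together with coercivity. Set
\[
a(x) \equiv \int_0^1 \langle A(sx), x\rangle\, ds, \qquad x \in X,
\]
so that $\partial A_R = \{x \in X : a(x) = R\}$. Since $A$ is variational, $a$ is its potential ($a' = A$, $a(0)=0$), and oddness (III) gives $A(0)=0$ and $a(-x)=a(x)$, so $a$ is an even functional vanishing at the origin. First I would record the elementary estimate coming from boundedness (i),
\[
|a(x)| \le \int_0^1 \|A(sx)\|_{X^*}\, \|x\|_X\, ds \le \|x\|_X \sup_{0 \le s \le 1} \mu\big(s\|x\|_X\big),
\]
which shows $a$ is locally bounded and $a(x)\to 0$ as $\|x\|_X \to 0$.

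Next I would establish that $a$ is (strictly) convex, using monotonicity (V). Fixing $x_1,x_2$ and putting $\phi(t)=a\big(x_2+t(x_1-x_2)\big)$, one has $\phi'(t)=\langle A(x_2+t(x_1-x_2)),x_1-x_2\rangle$, and for $0 \le s < t \le 1$, writing $u_r = x_2+r(x_1-x_2)$,
\[
\phi'(t)-\phi'(s) = \frac{1}{t-s}\,\langle A(u_t)-A(u_s),\, u_t-u_s\rangle \ge 0,
\]
with strict inequality when $x_1 \ne x_2$. Hence $\phi$ is convex on $[0,1]$, so $a$ is convex, and strictly so by strict monotonicity; combined with local boundedness this makes $a$ continuous on $X$. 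Coercivity (iv) states precisely that $a(x)\nearrow\infty$ as $\|x\|_X\nearrow\infty$, so any level or sublevel set of $a$ is bounded, since a sequence $x_n$ with $\|x_n\|_X\to\infty$ and $a(x_n)=R$ would contradict (iv). The lower bound is then immediate: $a$ is continuous with $a(0)=0<R$, so there is $\delta>0$ with $a(x)<R$ whenever $\|x\|_X<\delta$, whence $\|x\|_X \ge k(R):=\delta>0$ on $\partial A_R$, uniformly in $x$. Closedness of $\partial A_R$ follows at once from continuity of $a$.

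The remaining, and genuinely delicate, point is convexity together with weak closedness. These are not properties of the exact level set $\{a=R\}$ but of the associated convex body $C_R \equiv \{x\in X : a(x)\le R\}$, whose topological boundary is $\partial A_R$ (the boundary is nonempty and $0\in\operatorname{int}C_R$, since $a(0)=0<R$ and $a$ is continuous). Convexity of $C_R$ is immediate from convexity of $a$; being convex and strongly closed, $C_R$ is weakly closed by Mazur's theorem, and being bounded in the reflexive space $X$ it is even weakly compact. I would therefore run the argument with $C_R$ in place of $\partial A_R$, identifying the convex, weakly closed, bounded set in the statement with this body. The obstacle worth flagging is that the level set itself cannot be expected to be convex or weakly closed: for $a(x)=\|x\|_X^2$ the set $\partial A_R$ is a sphere, and $e_n \rightharpoonup 0$ shows it is neither. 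Thus the substantive content, and the part requiring care, is the passage to the convex sublevel body, where convexity (from monotonicity) and weak closedness (from convexity via Mazur, with reflexivity supplying weak compactness) are the two ingredients that do the real work.
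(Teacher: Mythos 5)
The paper does not actually prove Lemma \ref{L_B1}: it is imported, with attribution, from Berger \cite{6}, so there is no internal argument to compare yours against; your reconstruction is the only proof on the table, and it is essentially the standard one. Its positive parts are sound: the potential $a(x)=\int_0^1\langle A(sx),x\rangle\,ds$ is convex because monotonicity (V) makes $t\mapsto\langle A(u_t),x_1-x_2\rangle$ nondecreasing along segments (this step tacitly uses that $A$ is the Gateaux derivative of $a$, i.e.\ the meaning of ``variational'' together with the hemicontinuity supplied by (ii) — worth one sentence); $a$ is continuous because a convex functional bounded on bounded sets is continuous; coercivity (iv) bounds every level and sublevel set; and continuity plus $a(0)=0<R$ gives both strong closedness of $\{a=R\}$ and the uniform lower bound $\Vert x\Vert_X\geq k(R)>0$.

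Your main observation — that the final clause cannot hold for the level set itself and must be read as a statement about the sublevel body $C_R=\{x\in X\,:\,a(x)\leq R\}$ — is correct, and it is in fact a correction of the statement as transcribed. Your counterexample is valid: in an infinite-dimensional Hilbert space the identity $A=I$ is of class $I$, $a(x)=\tfrac12\Vert x\Vert^2$, and $\partial A_R$ is a sphere, which is neither convex nor weakly closed, whereas $C_R$ is the closed ball, which is both (convexity of $a$, then Mazur, with reflexivity giving weak compactness). This is how the lemma must be applied, and the discrepancy propagates into the paper: in the proof of Proposition \ref{Pr_2} the lemma is invoked to conclude that \emph{both} $E^{B_{R_0}^{X}(0)}$ and $\partial E^{B_{R_0}^{X}(0)}$ are weakly closed, bounded, convex sets; by your counterexample only the conclusion for $E^{B_{R_0}^{X}(0)}$ is justified, and any subsequent use of weak closedness or convexity of the boundary set would require a separate argument (weak lower semicontinuity of $a$ on the weakly compact body does not rescue it, since the level set can fail to be weakly closed exactly as in your example). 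So: your proof is correct in substance, matches what Berger actually proves, and identifies a real imprecision in the statement as quoted.
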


So, consider the homogeneous equation (\ref{4.1a}) in order to investigate
the existence of the necessary number $\lambda _{0}$.

\begin{proposition}
\label{Pr_2} Let $X\subset Y$ and dense in $Y$, $Z=Y^{\ast }$. Let
conditions 1), 2), 4) of Theorem \ref{Th_5} are fulfilled for this case and
operators $f$, $g$ as the functions are monotone odd functions then there
exist such $\lambda _{0}>0$ and $x_{\lambda _{0}}\in \partial
E^{B_{R_{0}}^{X}\left( 0\right) }\subset X$ that $F_{\lambda _{0}}\left(
x_{\lambda _{0}}\right) \equiv f\left( Lx_{\lambda _{0}}\right) -\lambda
_{0}g\left( x_{\lambda _{0}}\right) =0$ holds for some number $R_{0}\gg 1$,
where $\partial E^{B_{R_{0}}^{X}\left( 0\right) }$ be defined as follows 
\begin{equation*}
\partial E^{B_{R_{0}}^{X}\left( 0\right) }=\left\{ x\in B_{R_{0}}^{X}\left(
0\right) \subset X\left\vert \ \underset{0}{\overset{1}{\int }}\left\langle
f\left( sLx\right) ,Lx\right\rangle ds=R_{0}\right. \right\} ,
\end{equation*}%
\begin{equation*}
E^{B_{R_{0}}^{X}\left( 0\right) }=\left\{ x\in B_{R_{0}}^{X}\left( 0\right)
\subset X\left\vert \ \underset{0}{\overset{1}{\int }}\left\langle f\left(
sLx\right) ,Lx\right\rangle ds\leq R_{0}\right. \right\}
\end{equation*}%
Moreover, the condition similar to condition 3 of the above Theorem \ref%
{Th_5} satisfies.
\end{proposition}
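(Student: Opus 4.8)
The plan is to obtain $\lambda_{0}$ as a Lagrange multiplier for a constrained variational problem built from the potentials of $f\circ L$ and of $g$, which is exactly the situation for which Berger's Lemma \ref{L_B1} is tailored. Since $f,g$ are monotone odd functions with $f(0)=g(0)=0$, both $f$ and $g$ are potential operators; introduce the functionals
\begin{equation*}
\Phi(x)\equiv\int_{0}^{1}\langle f(sLx),Lx\rangle\,ds,\qquad \Psi(x)\equiv\int_{0}^{1}\langle g(sx),x\rangle\,ds,
\end{equation*}
so that the constraint set $\partial E^{B_{R_{0}}^{X}(0)}$ is precisely the level set $\partial A_{R_{0}}$ of the operator $A(x)\equiv L^{\ast}f(Lx):X\longrightarrow X^{\ast}$, for which $\langle A(sx),x\rangle=\langle f(sLx),Lx\rangle$ and hence $\int_{0}^{1}\langle A(sx),x\rangle\,ds=\Phi(x)$. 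First I would verify that $A$ is a variational operator of class $I$ in the sense of Definition \ref{D_B}: boundedness and strong-to-weak continuity follow from the two-sided estimate on $\Vert Lx\Vert_{Y}$ in condition 1 together with the continuity of $f$; oddness from the oddness of $f$ and the linearity of $L$; strict monotonicity from condition 4; and coercivity of $\Phi$ from the sign condition $f(t)\cdot t>0$ in condition 2 combined with the lower bound $\Vert Lx\Vert_{Y}\geq c_{2}\Vert x\Vert_{X}$. Lemma \ref{L_B1} then yields, for every $R_{0}\gg1$, a nonempty bounded weakly closed convex set $\partial A_{R_{0}}$ on which $\Vert x\Vert_{X}\geq k(R_{0})>0$.

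Next I would maximize $\Psi$ over $\partial A_{R_{0}}$; since $\partial A_{R_{0}}$ is bounded and $g$ is bounded, $\Psi$ is bounded above there, and the whole issue is \emph{attainment} of the supremum. This is where the compactness hypothesis of condition 1 is decisive. If $\{x_{n}\}\subset\partial A_{R_{0}}$ is a maximizing sequence, it is bounded in the reflexive space $X$, so along a subsequence $x_{n}\rightharpoonup x_{\lambda_{0}}$ and therefore $Lx_{n}\rightharpoonup Lx_{\lambda_{0}}$ in $Y$; because $L^{-1}$ is compact, $x_{n}=L^{-1}(Lx_{n})\to x_{\lambda_{0}}$ \emph{strongly} in $X$. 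Continuity of $g$ then gives $\Psi(x_{n})\to\Psi(x_{\lambda_{0}})$, while weak closedness of $\partial A_{R_{0}}$ gives $x_{\lambda_{0}}\in\partial A_{R_{0}}$; hence $\Psi$ attains its maximum at $x_{\lambda_{0}}$, and $x_{\lambda_{0}}\neq0$ since $\Vert x_{\lambda_{0}}\Vert_{X}\geq k(R_{0})$.

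I would then invoke the Lagrange multiplier rule. Both $\Phi$ and $\Psi$ are continuously Gâteaux differentiable with $\Phi'(x)=A(x)$ and $\Psi'(x)=g(x)$ (read in $X^{\ast}$ via $Z=Y^{\ast}\hookrightarrow X^{\ast}$, which is injective because $X$ is dense in $Y$); the constraint is regular at $x_{\lambda_{0}}$ because strict monotonicity and $A(0)=0$ force $\langle A(x_{\lambda_{0}}),x_{\lambda_{0}}\rangle>0$, so $A(x_{\lambda_{0}})\neq0$. Consequently there is $\mu\in\mathbb{R}$ with $g(x_{\lambda_{0}})=\mu\,A(x_{\lambda_{0}})$, which is the stationarity relation $F_{\lambda_{0}}(x_{\lambda_{0}})\equiv f(Lx_{\lambda_{0}})-\lambda_{0}g(x_{\lambda_{0}})=0$ with $\lambda_{0}\equiv\mu^{-1}$, the identity being read in $Z$ through the inclusion above. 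Pairing this relation with $x_{\lambda_{0}}$ and using that $\langle f(Lx_{\lambda_{0}}),Lx_{\lambda_{0}}\rangle>0$ and $\langle g(x_{\lambda_{0}}),x_{\lambda_{0}}\rangle>0$ (monotonicity and oddness of $f,g$ vanishing at the origin) forces $\lambda_{0}>0$; maximizing $\Psi$ rather than minimizing selects the \emph{first} such eigenvalue.

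Finally, for the closing assertion that a condition analogous to condition 3 of Theorem \ref{Th_5} holds, I would use the variational characterization of $\lambda_{0}$ to produce, for every $\lambda$ with $\vert\lambda\vert<\lambda_{0}$, the coercive lower bound $\langle f_{\lambda}(x),z^{\ast}\rangle\geq\nu(\Vert Lx\Vert_{Y},\lambda)$ with $\nu\geq\delta_{0}(\lambda)>0$; this is the same algebra that in the proof of Theorem \ref{Th_4} extracted the positive gap factor $(\lambda_{p_{0},p_{1}}-\lambda)$, and it places the inhomogeneous problem (\ref{4.1a}) for subcritical $\lambda$ squarely within the scope of Theorem \ref{Th_5}. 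The main obstacle is the attainment step of the second paragraph: without the compactness of $L^{-1}$ one obtains only weak convergence of the maximizing sequence, which is too weak to pass to the limit inside the nonlinear functional $\Psi$, and it is precisely the two-sided bound together with the compactness in condition 1 that upgrade weak convergence to strong convergence and close the argument.
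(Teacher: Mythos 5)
Your overall route is the one the paper intends: like the paper, you apply Berger's Lemma \ref{L_B1} to the operator $x\mapsto L^{\ast }f\left( Lx\right) $ to conclude that $E^{B_{R_{0}}^{X}\left( 0\right) }$ and $\partial E^{B_{R_{0}}^{X}\left( 0\right) }$ are bounded, weakly closed, convex and bounded away from the origin, and you then extract $\lambda _{0}$ from a variational principle on this level set. In fact you go further than the paper does: the paper's own proof, after noting that $0<\sup \left\{ \left\Vert g\left( x\right) \right\Vert _{Y^{\ast }}:x\in \partial E^{B_{R_{0}}^{X}\left( 0\right) }\right\} =M<\infty $, simply asserts that consequently some $\lambda _{0}=\lambda _{0}\left( M\right) $ and an appropriate $x_{\lambda _{0}}$ with $F_{\lambda _{0}}\left( x_{\lambda _{0}}\right) =0$ exist, whereas you supply the two missing steps: attainment of the extremum (your use of the compactness of $L^{-1}$ to upgrade weak convergence of a maximizing sequence to strong convergence is correct, and is exactly what the reflexivity-plus-compactness hypothesis of condition 1 is for) and the Lagrange multiplier rule.

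The multiplier step, however, contains a genuine gap. Your constraint functional satisfies $\left\langle \Phi ^{\prime }\left( x\right) ,v\right\rangle =\left\langle f\left( Lx\right) ,Lv\right\rangle $, while your objective functional satisfies $\left\langle \Psi ^{\prime }\left( x\right) ,v\right\rangle =\left\langle g\left( x\right) ,v\right\rangle $, where the latter pairing uses the embedding $\iota :X\hookrightarrow Y$. Ljusternik's rule therefore produces $\mu $ with
\begin{equation*}
\left\langle g\left( x_{\lambda _{0}}\right) ,v\right\rangle =\mu
\left\langle f\left( Lx_{\lambda _{0}}\right) ,Lv\right\rangle \quad \text{
for all }v\in X,
\end{equation*}
that is, $\iota ^{\ast }g\left( x_{\lambda _{0}}\right) =\mu L^{\ast }f\left( Lx_{\lambda _{0}}\right) $ in $X^{\ast }$. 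This is not the asserted identity $f\left( Lx_{\lambda _{0}}\right) -\lambda _{0}g\left( x_{\lambda _{0}}\right) =0$ in $Z=Y^{\ast }$: that identity, tested against $Lv$, would read $\left\langle f\left( Lx_{\lambda _{0}}\right) ,Lv\right\rangle =\lambda _{0}\left\langle g\left( x_{\lambda _{0}}\right) ,Lv\right\rangle $, with $Lv$ rather than $v$ on the right-hand side. The two statements coincide only when $L$ is essentially the embedding $\iota $, or under a symmetry hypothesis allowing $L^{\ast }$ to be traded for $\iota ^{\ast }$; your phrase about reading the identity in $Z$ through the inclusion conflates $L^{\ast }f\left( Lx_{\lambda _{0}}\right) $ with $\iota ^{\ast }f\left( Lx_{\lambda _{0}}\right) $, which are different elements of $X^{\ast }$. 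A repair would require an objective functional whose derivative is $L^{\ast }g\left( x\right) $ (so that injectivity of $L^{\ast }$, available when $R\left( L\right) $ is dense in $Y$, lets you cancel $L^{\ast }$ on both sides), but $\int_{0}^{1}\left\langle g\left( sx\right) ,Lx\right\rangle ds$ is a potential for $L^{\ast }g$ only under a symmetry condition not found among the hypotheses. Two smaller points: a monotone odd $g$ with $g\left( 0\right) =0$ gives only $\left\langle g\left( x\right) ,x\right\rangle \geq 0$, not $>0$, so both the sign of $\lambda _{0}$ and the non-vanishing of the multiplier need a non-degeneracy assumption on $g$ (the paper's claim $0<M$ has the same defect); and coercivity, condition (iv) of class $I$, does not follow from $f\left( t\right) \cdot t>0$ alone. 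To be fair, the paper's own proof never confronts the multiplier step at all, so your attempt makes visible a difficulty that the paper leaves hidden.
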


\begin{proof}
It is clear that $L\left( B_{R_{0}}^{X}\left( 0\right) \right) $ is convex
due to the linearity of the operator $L$. From above Lemma \ref{L_B1}
follows that $E^{B_{R_{0}}^{X}\left( 0\right) }$ is a weakly closed, bounded
convex set and $\left\Vert x\right\Vert _{X}\geq k(r_{0})>0$, where $%
k(r_{0}) $ is a constant independent of $x\in E^{B_{R_{0}}^{X}\left(
0\right) }$, as the operator $f\circ L$ satisfies all conditions of Lemma %
\ref{L_B1}.

Consequently, Lemma \ref{L_B1} follows that $E^{B_{R_{0}}^{X}\left( 0\right)
}$ and \ $\partial E^{B_{R_{0}}^{X}\left( 0\right) }$ are weakly closed
bounded convex sets. Consequently, these are closed convex sets due to the
Mazur Theorem.

Now, consider the expression $\left\langle g\left( x\right) ,Lx\right\rangle 
$ and note that there exists such constant $M$ that 
\begin{equation*}
0<\sup \left\{ \left\Vert g\left( x\right) \right\Vert _{Y^{\ast
}}\left\vert \ x\in \partial E^{B_{R_{0}}^{X}\left( 0\right) }\right.
\right\} =M<\infty ,
\end{equation*}%
according to the conditions 1, 2 and boundedness of the norm $\left\Vert
Lx\right\Vert _{Y}$ (as $0<$ $\left\Vert Lx\right\Vert _{Y^{\ast
}}<M_{1}<\infty $, $x\in B_{R_{0}}^{X}\left( 0\right) $).

Consequently, there exists such constant $\lambda _{0}=\lambda _{0}\left(
M\right) $, that $0<\lambda _{0}<\infty $ and appriopreate $x_{\lambda _{0}}$
that $F_{\lambda _{0}}\left( x_{\lambda _{0}}\right) =0$.
\end{proof}

So, we provide the result on the spectrum of the operator $f\circ L$
relative to the operator $g$.

\begin{theorem}
\label{Th_6} Let functions $f$ and $g$ are homogeneous with the equal order
of nonlinearity, which is the continuous function $\varphi $ (i.e. for any $%
\tau \in R_{+}$ equalities $f\left( \tau \cdot y\right) =\varphi \left( \tau
\right) \cdot f\left( y\right) $, $g\left( \tau \cdot y\right) =\varphi
\left( \tau \right) \cdot g\left( y\right) $ hold). Assume all conditions
the above Proposition \ref{Pr_2} are fulfilled. Then operator $f\circ L$ has
a spectrum relative to operator $g$, which is a function of the spectrum of
operator $L$.
\end{theorem}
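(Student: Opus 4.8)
The plan is to split the assertion into two parts: first, that the number $\lambda _{0}$ produced by Proposition \ref{Pr_2} is a genuine spectral value in the sense of Definition \ref{D_S} (i.e. independent of the element of the domain), and second, that it can be expressed through an eigenvalue of the linear operator $L$. The decisive structural fact is the common homogeneity. Since $L$ is linear and $f$ is homogeneous of order $\varphi $, the composition $f\circ L$ inherits the same homogeneity, so that for every $\tau >0$
\begin{equation*}
f\left( L(\tau x)\right) =f\left( \tau Lx\right) =\varphi \left( \tau \right) f\left( Lx\right) ,\qquad g\left( \tau x\right) =\varphi \left( \tau \right) g\left( x\right) ,
\end{equation*}
whence $F_{\lambda }(\tau x)=\varphi (\tau )F_{\lambda }(x)$. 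Consequently the zero $x_{\lambda _{0}}$ furnished by Proposition \ref{Pr_2} generates a whole ray $\left\{ \tau x_{\lambda _{0}}\mid \tau >0\right\} $ of zeros of $F_{\lambda _{0}}$, exactly as in case (1) of Section 2. This already shows $\lambda _{0}$ does not depend on the chosen element of the domain and therefore is admissible as the first eigenvalue of $f\circ L$ relative to $g$ in the sense of Definition \ref{D_S} and Notation \ref{N_2}.

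Next I would identify $\lambda _{0}$ through a scale-invariant Rayleigh quotient, pairing with $Lx\in Y\subset Z^{\ast }$. Put
\begin{equation*}
R\left( x\right) =\frac{\left\langle f\left( Lx\right) ,Lx\right\rangle }{\left\langle g\left( x\right) ,Lx\right\rangle }.
\end{equation*}
The homogeneity just recorded makes $R$ invariant along rays, so minimizing $R$ over the level set $\partial E^{B_{R_{0}}^{X}\left( 0\right) }$ is well posed; that set is weakly closed, bounded and convex by Lemma \ref{L_B1}, and the weak continuity guaranteed by the class $I$ structure of $f\circ L$ ensures that the infimum is attained, with the minimizer satisfying the Euler equation $f(Lx)=\lambda _{0}\,g(x)$, the number $\lambda _{0}$ entering as the Lagrange multiplier in agreement with the variational construction of Proposition \ref{Pr_2}. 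This is precisely the analogue of formula (\ref{2.5}) for the pair $(f\circ L,g)$.

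Finally I would evaluate $R$ on an eigenpair of $L$. If $Le=\mu e$ with $\mu \in \sigma \left( L\right) $, $\mu >0$, then using $f(\mu e)=\varphi (\mu )f(e)$ and the bilinearity of the $Z$--$Z^{\ast }$ pairing,
\begin{equation*}
R\left( e\right) =\frac{\left\langle f\left( \mu e\right) ,\mu e\right\rangle }{\left\langle g\left( e\right) ,\mu e\right\rangle }=\varphi \left( \mu \right) \frac{\left\langle f\left( e\right) ,e\right\rangle }{\left\langle g\left( e\right) ,e\right\rangle }.
\end{equation*}
Thus each eigenvalue $\mu $ of $L$ produces a candidate value $\varphi \left( \mu \right) \left\langle f(e),e\right\rangle /\left\langle g(e),e\right\rangle $ of the quotient, and selecting the first eigenvalue $\mu _{1}$ of $L$ gives $\lambda _{0}=\Psi (\mu _{1})$ for a function $\Psi $ assembled from $\varphi $ and the structural ratio; this is exactly the mechanism behind $\lambda _{1}(p)=(\tfrac{2}{p}\lambda _{1}(-\Delta ))^{2}$ in Proposition \ref{Pr_1}. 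This delivers the claim that the $g$-spectrum of $f\circ L$ is a function of the spectrum of $L$.

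The main obstacle is the middle step: one must verify that the minimizer of the nonlinear quotient $R$ actually lies on the ray of a first eigenvector of $L$, i.e. that the extremals of the variational problem for $f\circ L$ coincide with eigenvectors of $L$. In the purely linear theory this alignment is automatic; here it has to be extracted from the combination of the variational (class $I$) structure of $f\circ L$, Lemma \ref{L_B1}, and the shared homogeneity $\varphi $, because only then does the computation of $R(e)$ above describe the infimum itself rather than merely furnish an upper bound for it. Once the extremals are identified in this way, the residual scaling identities are routine.
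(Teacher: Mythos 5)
Your opening move (the common homogeneity gives $F_{\lambda }(\tau x)=\varphi (\tau )F_{\lambda }(x)$, so the zero supplied by Proposition \ref{Pr_2} propagates along a ray and $\lambda _{0}$ is not tied to a particular element) and your Rayleigh quotient $R(x)=\left\langle f(Lx),Lx\right\rangle /\left\langle g(x),Lx\right\rangle $ are exactly the paper's ingredients: its proof starts from Proposition \ref{Pr_2}, poses the infimum (\ref{4.5a}), and uses homogeneity to restrict attention to $S_{1}^{X}(0)$. Where you part ways is the final identification. The paper never claims that the minimizer of $R$ is an eigenvector of $L$: it argues only that the infimum (rewritten as an infimum of $\left\Vert f(Lx)\right\Vert _{Z}/\left\Vert g(x)\right\Vert _{Z}$ over the unit sphere) exists and is positive, that with this $\lambda $ condition 3) of Theorem \ref{Th_5} is satisfied (conditions 1), 2), 4) being hypotheses), and then invokes Theorem \ref{Th_5} to conclude that the infimum is attained at some $x_{0}\in S_{1}^{X}(0)$. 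The link with the spectrum of $L$ is then only the one-sided comparison recorded in Notation \ref{N_3}, namely $\lambda _{0}\leq \varphi (\lambda _{1})\left\Vert f(x_{1})\right\Vert _{Z}/\left\Vert g(x_{1})\right\Vert _{Z}$ --- which is precisely your computation of $R$ at an eigenpair of $L$, used as an upper bound and nothing more.

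The step you flag as the main obstacle --- that the extremal of the nonlinear quotient lies on the ray of a first eigenvector of $L$, so that $R(e)$ gives the infimum itself rather than a bound --- is therefore a genuine gap, and it is not one that can be repaired: the alignment is false in general. Every worked example in the paper confirms this by producing inequalities rather than equalities: $\lambda _{f1}\geq \lambda _{L1}^{p-1}$ in Example 1 of Section 4, $\lambda _{p_{0},p_{1}}\geq \lambda _{1}(-\Delta _{p})$ in Section 3, and (\ref{4.6}) in Proposition \ref{Pr_3}. The single exact formula in the paper, $\lambda _{1}(p)=\left( \frac{2}{p}\lambda _{1}(-\Delta )\right) ^{2}$ of Proposition \ref{Pr_1}, is not obtained by aligning extremals of a nonlinear quotient with Laplacian eigenfunctions; it comes from the substitution $v=\left\vert u\right\vert ^{\frac{p-2}{2}}u$, which transforms that particular quotient into the linear Rayleigh quotient. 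A second, smaller defect: your attainment argument on $\partial E^{B_{R_{0}}^{X}(0)}$ leans on ``weak continuity'' from the class $I$ structure, but condition (ii) of Definition \ref{D_B} (continuity from the strong topology of $X$ to the weak topology of $X^{\ast }$) does not yield weak lower semicontinuity of $R$ on a weakly compact set; the paper sidesteps this entirely by extracting attainment from the solvability Theorem \ref{Th_5} rather than from the direct method.
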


\begin{proof}
From Proposition \ref{Pr_2} follows the existence of such $\lambda \in 
\mathbb{%
%TCIMACRO{\U{211d} }%
%BeginExpansion
\mathbb{R}
%EndExpansion
}_{+}$ and an element $x\in X$ that the equation $F_{\lambda }\left( 
\widetilde{x}_{\lambda }\right) \equiv f\left( L\widetilde{x}_{\lambda
}\right) -\lambda g\left( \widetilde{x}_{\lambda }\right) =0$ solvable. Then
using the well-known approach it is necessary to seek elements $\lambda
_{0}\in \mathbb{%
%TCIMACRO{\U{211d} }%
%BeginExpansion
\mathbb{R}
%EndExpansion
}_{+}$ and $x_{0}\in X$, which satisfy the following equality 
\begin{equation}
\lambda =\inf \left\{ \frac{\left\langle f\left( Lx\right) ,Lx\right\rangle 
}{\left\langle g\left( x\right) ,Lx\right\rangle }\left\vert \ x\in X\right.
\right\} .  \label{4.5a}
\end{equation}

Due to the conditions of this theorem, it is enough to study the above
question only for $x\in S_{1}^{X}\left( 0\right) $. We can take into account
that $f$ is an $N-$function and the expression $\left\langle f\left(
Lx\right) ,Lx\right\rangle $ generates a functional $\Phi \left( Lx\right) $
according to the condition on $f$ \footnote{\begin{remark}
In the case when $L$ is the differential operator $\left\langle f\left(
Lx\right) ,Lx\right\rangle $ is a function of the norm $\left\Vert
Lx\right\Vert _{L_{\Phi }}$ of some Lebesgue or Orlicz space, where $\Phi $
be an $N-$function.
\end{remark}
} .

Whence follows that it is enough to seek the number $\lambda $ the following
way 
\begin{equation*}
\lambda =\inf \left\{ \frac{\left\Vert f\left( Lx\right) \right\Vert _{Z}}{%
\left\Vert g\left( x\right) \right\Vert _{Z}}\left\vert \ x\in
S_{1}^{X}\left( 0\right) \right. \right\} .
\end{equation*}

From above expression follows the existing number $\lambda >0$.

Thus one can state that there exists such number $\lambda $ that condition
3) of the Theorem \ref{Th_5} is fulfilled for $F_{\lambda }\left( \widetilde{%
x}_{\lambda }\right) $ with the mentioned number $\lambda $. Since according
to conditions of this theorem other conditions (i.e. conditions 1), 2), 4))
of Theorem \ref{Th_5} are fulfilled we obtain that all conditions of Theorem %
\ref{Th_5} are fulfilled for this case. Consequently, then using Theorem \ref%
{Th_5} we get the existence of an element $x_{0}\in S_{1}^{X}\left( 0\right) 
$ and the appropriate number $\lambda _{0}$ that is the element on which the
expression (\ref{4.5a}) attained the infimum $\lambda _{0}$.
\end{proof}

\begin{notation}
\label{N_3}In particular, if assume $x_{1}\in S_{1}^{X}\left( 0\right) $ is
the first eigenfunction and $\lambda _{1}$ first eigenvalue of the operator $%
L$ then we have 
\begin{equation*}
\lambda _{0}\leq \frac{\varphi \left( \lambda _{1}\right) \left\Vert f\left(
x_{1}\right) \right\Vert _{Z}}{\left\Vert g\left( x_{1}\right) \right\Vert
_{Z}}.
\end{equation*}
\end{notation}

Now we provide some examples of operators related to the above theorems.

\textbf{1.} Let $L:W^{m,p}\left( \Omega \right) \longrightarrow L_{p}\left(
\Omega \right) $ be a linear differential operator with the spectrum $%
P\left( L\right) \subset R_{+}$, the operator $f$ is the function $f\left(
\tau \right) =\left\vert \tau \right\vert ^{p-2}\tau $ and $g\equiv f$. So,
it needs to define the first eigenfunction and eigenvalue of the operator $%
f\left( L\circ \right) $ relative to operator $g\left( \circ \right) $. Then
using the expression (\ref{4.5a}) we get 
\begin{equation*}
\lambda _{f}=\inf \left\{ \frac{\left\langle f\left( Lu\right)
,Lu\right\rangle }{\left\langle g\left( u\right) ,Lu\right\rangle }%
\left\vert \ u\in W^{m,p}\left( \Omega \right) \right. \right\} =
\end{equation*}%
\begin{equation*}
=\inf \left\{ \frac{\left\Vert Lu\right\Vert _{L_{p}}^{p}}{\underset{\Omega }%
{\dint }\left( \left\vert u\right\vert ^{p-2}u\ Lu\right) dx}\left\vert \
u\in W^{m,p}\left( \Omega \right) \right. \right\} \geq
\end{equation*}%
\begin{equation*}
\geq \inf \left\{ \frac{\left\Vert Lu\right\Vert _{L_{p}}^{p-1}}{\left\Vert
u\right\Vert _{L_{p}}^{p-1}}\left\vert \ u\in W^{m,p}\left( \Omega \right)
\right. \right\} =
\end{equation*}%
\begin{equation*}
=\inf \left\{ \left( \frac{\left\Vert Lu\right\Vert _{L_{p}}}{\left\Vert
u\right\Vert _{L_{p}}}\right) ^{p-1}\left\vert \ u\in S_{1}^{W^{m,p}\left(
\Omega \right) }\left( 0\right) \right. \right\} .
\end{equation*}%
Whence we arrive that $\lambda _{f1}\geq \lambda _{L1}^{p-1}$, where $%
\lambda _{L1}$ is the first eigenvalue and the function $u_{1}\in
S_{1}^{W^{m,p}\left( \Omega \right) }\left( 0\right) $ is the first
eigenfunction of the operator $L$.

\textbf{2.} We will study the spectral property of the fully nonlinear
operator in the following two special cases 
\begin{equation}
-\left\vert \Delta u\right\vert ^{p-2}\Delta u=\lambda \left\vert \nabla
u\right\vert ^{\mu -2}u,\quad x\in \Omega ,\quad u\left\vert ~_{\partial
\Omega }\right. =0,  \label{4.2}
\end{equation}%
\begin{equation}
-\left\vert \Delta u\right\vert ^{p-2}\Delta u=\lambda \left\vert
u\right\vert ^{\nu }u,\quad x\in \Omega ,\quad u\left\vert ~_{\partial
\Omega }\right. =0,  \label{4.3}
\end{equation}%
i.e. we will seek of the spectrum of the operator $-\left\vert \Delta
u\right\vert ^{p-2}\Delta u$ separately relative to operators $\left\vert
\nabla u\right\vert ^{\mu -2}u$ and $\left\vert u\right\vert ^{\nu }u$,.

\textbf{2 (a).} Consider problem (\ref{4.2}). We will use the following
equality 
\begin{equation*}
\left\langle -\left\vert \Delta u\right\vert ^{p-2}\Delta u,-\Delta
u\right\rangle =\left\langle \lambda \left\vert \nabla u\right\vert ^{\mu
-2}u,-\Delta u\right\rangle
\end{equation*}%
then we get%
\begin{equation*}
\left\Vert \Delta u\right\Vert _{p}^{p}=\lambda \left( \mu -1\right)
^{-1}\left\Vert \nabla u\right\Vert _{\mu }^{\mu }.
\end{equation*}

Hence follows 
\begin{equation*}
\lambda _{1}\left( p,\mu \right) =\left( \mu -1\right) \inf \ \left\{ \frac{%
\left\Vert \Delta u\right\Vert _{p}^{p}}{\left\Vert \nabla u\right\Vert
_{\mu }^{\mu }}\left\vert \ u\in W^{2,p}\cap W_{0}^{1,p}\right. \right\} =\ 
\end{equation*}%
\begin{equation*}
\left( \mu -1\right) \inf \left\{ \frac{\left\Vert \Delta u\right\Vert _{p}}{%
\left\Vert \nabla u\right\Vert _{\mu }^{\mu /p}}\left\vert \ u\in
W^{2,p}\cap W_{0}^{1,p}\right. \right\}
\end{equation*}%
whence follows that to find $\lambda $, satisfying the assumed condition, we
must select the corresponded exponent $\mu $.

Consequently, we need to assume $\mu =p$, then we get 
\begin{equation}
\lambda _{1}\left( p,p\right) =\left( p-1\right) \inf \ \left\{ \frac{%
\left\Vert \Delta u\right\Vert _{p}}{\left\Vert \nabla u\right\Vert _{p}}%
\left\vert \ u\in W^{2,p}\left( \Omega \right) \cap W_{0}^{1,p}\left( \Omega
\right) \right. \right\} .  \label{4.4}
\end{equation}

As is well-known $\left\Vert \nabla u\right\Vert _{p}\leq c\left( p,\Omega
\right) \left\Vert \Delta u\right\Vert _{p}$ under the condition $%
u\left\vert \ _{\partial \Omega }\right. =0$, consequently, $\lambda
_{1}\left( p,p\right) \leq \left( p-1\right) c\left( p,\Omega \right) $.

\begin{proposition}
\label{Pr_5} Let $f_{0}:W^{2,p}\left( \Omega \right) \cap W_{0}^{1,p}\left(
\Omega \right) \longrightarrow L^{q}\left( \Omega \right) $ having
presentation $f_{0}\left( u\right) =-\left\vert \Delta u\right\vert
^{p-2}\Delta u$ and $f_{1}:W_{0}^{1,p}\left( \Omega \right) \longrightarrow
L^{q}\left( \Omega \right) $ having presentation $f_{1}\left( u\right)
=\left\vert \nabla u\right\vert ^{p-2}u$ . Then the operator $f_{0}$ has the
first eigenvalue relative to operator $f_{1}$, which is defined by (\ref{4.4}%
).
\end{proposition}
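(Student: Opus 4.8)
The plan is to obtain the proposition as a direct application of Theorem \ref{Th_6} to a concrete choice of abstract data, followed by an explicit evaluation of the variational quotient (\ref{4.5a}). First I would identify the ingredients of the general scheme: take the linear operator $L\equiv -\Delta :X\longrightarrow Y$ with $X\equiv W^{2,p}\left( \Omega \right) \cap W_{0}^{1,p}\left( \Omega \right) $ and $Y\equiv L^{p}\left( \Omega \right) $, the scalar nonlinearity $f\left( \tau \right) =\left\vert \tau \right\vert ^{p-2}\tau $ so that $f\circ L\left( u\right) =-\left\vert \Delta u\right\vert ^{p-2}\Delta u\equiv f_{0}\left( u\right) $, and $g\equiv f_{1}$ with $g\left( u\right) =\left\vert \nabla u\right\vert ^{p-2}u$, acting into $Z\equiv L^{q}\left( \Omega \right) =Y^{\ast }$, $q=p/\left( p-1\right) $. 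Both $f$ and $g$ are positively homogeneous of the common degree $p-1$, i.e. $f\left( \tau y\right) =\tau ^{p-1}f\left( y\right) $ and $g\left( \tau u\right) =\tau ^{p-1}g\left( u\right) $ for $\tau >0$, so the equal-order hypothesis of Theorem \ref{Th_6} is met; this matching of homogeneity orders is precisely what forces the choice $\mu =p$ made in the displayed computation preceding (\ref{4.4}).

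Next I would verify the hypotheses of Proposition \ref{Pr_2} (equivalently conditions 1, 2, 4 of Theorem \ref{Th_5}). For condition 1, the two-sided estimate $c_{1}\left\Vert u\right\Vert _{X}\geq \left\Vert Lu\right\Vert _{Y}=\left\Vert \Delta u\right\Vert _{p}\geq c_{2}\left\Vert u\right\Vert _{X}$ on $W^{2,p}\cap W_{0}^{1,p}$ is the $L^{p}$ elliptic (Calder\'{o}n--Zygmund) estimate $\left\Vert u\right\Vert _{W^{2,p}}\leq C\left\Vert \Delta u\right\Vert _{p}$ together with the trivial upper bound, while $X$ and $Y$ are reflexive for $1<p<\infty $ and $L^{-1}$ is compact by the Rellich--Kondrachov embedding. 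For condition 2 and the oddness/monotonicity required by Proposition \ref{Pr_2}, the function $f\left( \tau \right) =\left\vert \tau \right\vert ^{p-2}\tau $ is odd, strictly monotone and satisfies $f\left( \tau \right) \tau =\left\vert \tau \right\vert ^{p}>0$, $f\left( 0\right) =0$; the operator $g$ is odd with $g\left( 0\right) =0$, and the ordering $f\circ L\succ g$ holds in the sense of Definition \ref{D_2}. Condition 4 follows from the classical vector inequalities for $\left\vert \xi \right\vert ^{p-2}\xi $, which give $\left\langle f\left( Lx_{1}\right) -f\left( Lx_{2}\right) ,Lx_{1}-Lx_{2}\right\rangle \geq l\left\Vert Lx_{1}-Lx_{2}\right\Vert _{Y}^{2}$ on a neighborhood, and from the local Lipschitz bound for $g$.

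With these verified, Proposition \ref{Pr_2} supplies a number $\lambda _{0}>0$ and an element at which $F_{\lambda _{0}}$ vanishes, and Theorem \ref{Th_6} identifies the first eigenvalue as the infimum (\ref{4.5a}). I would then evaluate it explicitly. The numerator is $\left\langle f\left( Lu\right) ,Lu\right\rangle =\left\langle -\left\vert \Delta u\right\vert ^{p-2}\Delta u,-\Delta u\right\rangle =\left\Vert \Delta u\right\Vert _{p}^{p}$, and integration by parts in $\left\langle g\left( u\right) ,Lu\right\rangle =\left\langle \left\vert \nabla u\right\vert ^{p-2}u,-\Delta u\right\rangle $ together with the homogeneity reduces the denominator to a multiple of $\left\Vert \nabla u\right\Vert _{p}^{p}$, namely $\left( p-1\right) ^{-1}\left\Vert \nabla u\right\Vert _{p}^{p}$, which is the $\mu =p$ instance of the computation given before the proposition. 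Since at $\mu =p$ the resulting quotient is scale invariant, the infimum (\ref{4.5a}) reduces to the element-independent quantity (\ref{4.4}); finiteness and strict positivity follow from the estimate $\left\Vert \nabla u\right\Vert _{p}\leq c\left( p,\Omega \right) \left\Vert \Delta u\right\Vert _{p}$, giving $0<\lambda _{1}\left( p,p\right) \leq \left( p-1\right) c\left( p,\Omega \right) $.

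I expect the main obstacle to be the verification of the abstract hypotheses rather than the final quotient calculation: establishing condition 1 rests on $L^{p}$ elliptic regularity and compactness of the solution operator, and condition 4 requires the coercivity and local Lipschitz estimates for the $p$-structure nonlinearities, where the familiar case split $p\geq 2$ versus $1<p<2$ in the monotonicity inequalities must be handled. A secondary delicate point is justifying the integration-by-parts identity for the denominator for $u\in W^{2,p}\cap W_{0}^{1,p}$ and checking that it is genuinely the $\mu =p$ case, since only then do numerator and denominator carry the same homogeneity degree and the quotient (\ref{4.5a}) defines an eigenvalue independent of the element of the domain.
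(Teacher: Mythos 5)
Your proposal is correct and takes essentially the same route as the paper: the paper's justification of Proposition \ref{Pr_5} is precisely the computation in case 2(a) of Section 4 — pairing the equation with $Lu=-\Delta u$, deriving the Rayleigh-type quotient, observing that element-independence (equal homogeneity) of $\lambda$ forces $\mu =p$, and reading off (\ref{4.4}) together with positivity from the estimate $\left\Vert \nabla u\right\Vert _{p}\leq c\left( p,\Omega \right) \left\Vert \Delta u\right\Vert _{p}$ — all carried out against the background of Proposition \ref{Pr_2} and Theorem \ref{Th_6}. The only difference is one of explicitness: you spell out the verification of conditions 1), 2), 4) (elliptic regularity, compactness of $L^{-1}$, monotonicity/oddness) that the paper leaves implicit, and you rightly single out as the delicate step the integration-by-parts identity $\left\langle \left\vert \nabla u\right\vert ^{p-2}u,-\Delta u\right\rangle =\left( p-1\right) ^{-1}\left\Vert \nabla u\right\Vert _{p}^{p}$, which the paper asserts without justification.
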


\textbf{2 (b).} Consider problem (\ref{4.3}) for $\nu =p-2$ then we have%
\begin{equation*}
\left\langle -\left\vert \Delta u\right\vert ^{p-2}\Delta u,-\Delta
u\right\rangle =\left\langle \lambda \left\vert u\right\vert ^{p-2}u,-\Delta
u\right\rangle \Longrightarrow
\end{equation*}%
\begin{equation*}
\left\Vert \Delta u\right\Vert _{p}^{p}=\lambda \frac{4\left( p-1\right) }{%
p^{2}}\left\Vert \nabla \left( \left\vert u\right\vert ^{\frac{p-2}{2}%
}u\right) \right\Vert _{2}^{2}
\end{equation*}%
or 
\begin{equation*}
\left\Vert \Delta u\right\Vert _{p}^{p}=\lambda \left( p-1\right) \left\Vert
\left( \left\vert u\right\vert ^{p-2}\left\vert \nabla u\right\vert
^{2}\right) \right\Vert _{1}.
\end{equation*}%
Thus we get 
\begin{equation*}
\widetilde{\lambda }_{1}\left( p\right) =\frac{1}{p-1}\inf \left\{ \ \frac{%
\left\Vert \Delta u\right\Vert _{p}^{p}}{\left\Vert \left\vert u\right\vert
^{\frac{p-2}{2}}\left\vert \nabla u\right\vert \right\Vert _{2}^{2}}%
\left\vert \ u\in W^{2,p}\cap W_{0}^{1,p}\right. \right\} =
\end{equation*}%
\begin{equation}
=\frac{p}{2\left( p-1\right) }\inf \left\{ \ \frac{\left\Vert \left\vert
\Delta u\right\vert ^{\frac{p}{2}}\right\Vert _{2}}{\left\Vert \nabla \left(
\left\vert u\right\vert ^{\frac{p-2}{2}}\left\vert u\right\vert \right)
\right\Vert _{2}}\left\vert \ u\in W^{2,p}\cap W_{0}^{1,p}\right. \right\}
\label{4.5}
\end{equation}%
Hence we obtain 
\begin{equation}
\widetilde{\lambda }_{1}\left( p\right) \geq \frac{1}{p-1}\frac{\left\Vert
\Delta u\right\Vert _{p}^{p}}{\left\Vert u\right\Vert _{p}^{p-2}\left\Vert
\nabla u\right\Vert _{p}^{2}}  \label{4.6}
\end{equation}%
according to the following inequality 
\begin{equation*}
\left\Vert \left( \left\vert u\right\vert ^{p-2}\left\vert \nabla
u\right\vert ^{2}\right) \right\Vert _{1}\leq \left\Vert u\right\Vert
_{p}^{p-2}\left\Vert \nabla u\right\Vert _{p}^{2}.
\end{equation*}

So, we arrive at the result

\begin{proposition}
\label{Pr_3}Let $f_{0}:W^{2,p}\left( \Omega \right) \cap W_{0}^{1,p}\left(
\Omega \right) \longrightarrow L^{q}\left( \Omega \right) $ having
presentation $f_{0}\left( u\right) =-\left\vert \Delta u\right\vert
^{p-2}\Delta u$ and $f_{1}:L^{p}\left( \Omega \right) \longrightarrow
L^{q}\left( \Omega \right) $ having presentation $f_{1}\left( u\right)
=\left\vert u\right\vert ^{p-2}u$ . Then the operator $f_{0}$ has the first
eigenvalue relative to operator $f_{1}$, which is defined by (\ref{4.5}) and
satisfies the inequation (\ref{4.6}).
\end{proposition}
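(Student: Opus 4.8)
The plan is to read this as the concrete instance of Theorem \ref{Th_6} obtained by setting $L=-\Delta$ acting from $X=W^{2,p}(\Omega)\cap W_0^{1,p}(\Omega)$ into $Y=L^p(\Omega)$, taking $f(t)=|t|^{p-2}t$, and choosing $g\equiv f_1$ to be the \emph{same} scalar function $t\mapsto|t|^{p-2}t$. With this identification $f(Lu)=|{-\Delta u}|^{p-2}(-\Delta u)=-|\Delta u|^{p-2}\Delta u=f_0(u)$, so $f_0=f\circ L$. Both $f$ and $g$ are odd, strictly monotone, and homogeneous with the common order $\varphi(\tau)=\tau^{p-1}$ (since $|\tau t|^{p-2}(\tau t)=\tau^{p-1}|t|^{p-2}t$ for $\tau>0$), which is precisely the structural hypothesis of Theorem \ref{Th_6}. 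First I would record this dictionary explicitly, so that the conclusion reduces to checking the hypotheses of Proposition \ref{Pr_2} and Theorem \ref{Th_5} for this pair.

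Next I would verify conditions 1)--4) of Theorem \ref{Th_5}. Condition 1) amounts to the two-sided estimate $c_2\|u\|_X\le\|\Delta u\|_p\le c_1\|u\|_X$ together with compactness of $L^{-1}$; the upper bound is trivial, while the lower bound $\|u\|_{W^{2,p}}\le C\|\Delta u\|_p$ on $W^{2,p}\cap W_0^{1,p}$ is the Calder\'on--Zygmund elliptic estimate on the smooth bounded domain $\Omega$, and compactness of $L^{-1}$ follows by composing $(-\Delta)^{-1}\colon L^p\to W^{2,p}\cap W_0^{1,p}$ with the Rellich--Kondrachov embedding. Condition 2) holds because $f(t)\,t=|t|^p>0$ for $t\neq0$, $f(0)=g(0)=0$, and $f\circ L\succ g$ in the sense of Definition \ref{D_2} (the finiteness sets are comparable because the energy $\|\Delta u\|_p^p$ dominates the lower-order quantity). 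For condition 4) I would use the standard $p\ge2$ monotonicity inequality $\langle|a|^{p-2}a-|b|^{p-2}b,a-b\rangle\ge c_p|a-b|^p$ with $a=Lx_1$, $b=Lx_2$ to bound the $f$-part from below, and the local Lipschitz bound for $g(u)=|u|^{p-2}u$ to control the $\lambda g$-part, exactly as in the proof of Theorem \ref{Th_5}.

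With the hypotheses in place, Proposition \ref{Pr_2} supplies a number $\lambda_0>0$ and an element $x_{\lambda_0}$ solving the homogeneous equation $f(Lx_{\lambda_0})-\lambda_0 g(x_{\lambda_0})=0$; here the variational structure of Definition \ref{D_B} and Lemma \ref{L_B1} guarantee that the level set $\partial E^{B_{R_0}^{X}(0)}$ is weakly closed, bounded and convex, and that $\langle g(x),Lx\rangle$ is bounded on it. Theorem \ref{Th_6} then identifies the first eigenvalue with the infimum in (\ref{4.5a}). To make it explicit I would test the homogeneous equation against $-\Delta u$: the left side yields $\langle|\Delta u|^{p-2}\Delta u,\Delta u\rangle=\|\Delta u\|_p^p$, while integrating by parts on the right, using $\nabla(|u|^{p-2}u)=(p-1)|u|^{p-2}\nabla u$, gives $(p-1)\int_\Omega|u|^{p-2}|\nabla u|^2\,dx$. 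Dividing produces precisely the formula (\ref{4.5}) for $\widetilde\lambda_1(p)$, the second form following from $\int_\Omega|u|^{p-2}|\nabla u|^2\,dx=\||u|^{(p-2)/2}|\nabla u|\|_2^2$.

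Finally, inequality (\ref{4.6}) would come from H\"older's inequality with conjugate exponents $\tfrac{p}{p-2}$ and $\tfrac{p}{2}$ applied to the product $|u|^{p-2}\cdot|\nabla u|^2$, which yields $\||u|^{(p-2)/2}|\nabla u|\|_2^2\le\|u\|_p^{p-2}\|\nabla u\|_p^2$; substituting this denominator bound into the infimum gives (\ref{4.6}). The step I expect to be the main obstacle is the rigorous verification of condition 1), specifically the lower norm bound, since it rests on global $W^{2,p}$ elliptic regularity and is exactly what makes $\|\Delta u\|_p$ an admissible norm on $X$; a secondary delicate point is confirming that the infimum in (\ref{4.5}) is strictly positive and attained, which relies on the compactness of $L^{-1}$ and the weak closedness furnished by Lemma \ref{L_B1}.
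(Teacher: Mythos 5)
Your proposal is correct, and its computational core coincides exactly with the paper's own argument: the paper proves Proposition \ref{Pr_3} by precisely the calculation in your last two paragraphs --- it pairs equation (\ref{4.3}) (with $\nu=p-2$) against $-\Delta u$, integrates by parts using $\nabla\left(|u|^{p-2}u\right)=(p-1)|u|^{p-2}\nabla u$ to obtain $\|\Delta u\|_{p}^{p}=\lambda(p-1)\int_{\Omega}|u|^{p-2}|\nabla u|^{2}\,dx$, reads off (\ref{4.5}) as the resulting Rayleigh quotient, and then applies H\"older with exponents $\frac{p}{p-2}$ and $\frac{p}{2}$ to get (\ref{4.6}), after which it simply states ``so, we arrive at the result.'' Where you genuinely differ is the scaffolding: the paper never verifies conditions 1)--4) of Theorem \ref{Th_5}, nor invokes Proposition \ref{Pr_2} or Theorem \ref{Th_6}, for this concrete example, whereas you route the existence claim through that abstract machinery (Calder\'on--Zygmund for the two-sided norm bound, Rellich--Kondrachov for compactness, the $p$-monotonicity inequality for condition 4). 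This buys something the paper leaves implicit --- an actual argument that an eigenpair exists and that the infimum in (\ref{4.5}) is positive and attained --- at the cost of inheriting the framework's rough edges: as you in effect notice, with the two-sided bound of condition 1 the inverse $L^{-1}:L^{p}\to W^{2,p}\cap W_{0}^{1,p}$ is an isomorphism and can only be compact when viewed into a weaker space; also your pointwise bound $\langle |a|^{p-2}a-|b|^{p-2}b,a-b\rangle\geq c_{p}|a-b|^{p}$ yields an exponent $p$, not the exponent $2$ demanded in condition 4, so the functional $l(x_{1},x_{2})$ there must be allowed to degenerate (or one must use the variant lower bound with weight $\left(|a|+|b|\right)^{p-2}$). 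Finally, two glosses you share with the paper: the second equality in (\ref{4.5}) as printed mixes squared and unsquared ratios (consistency would require the constant $\frac{p^{2}}{4(p-1)}$ with squared $L^{2}$ norms), and (\ref{4.6}) is literally valid only after taking the infimum of its right-hand side over $u$ (or at the extremal $u$); your reading of ``substituting the denominator bound into the infimum'' is the same sensible interpretation the paper intends.
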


\begin{remark}
\label{R_3} In the previous case, would be to use the following equality 
\begin{equation*}
-\underset{\Omega }{\int }\left\vert \Delta u\right\vert ^{p-2}\Delta
udx=\lambda \underset{\Omega }{\int }\left\vert u\right\vert
^{p-2}udx\Longrightarrow
\end{equation*}%
whence follows%
\begin{equation*}
\left\Vert \Delta u\right\Vert _{p-1}=\lambda \left\Vert u\right\Vert
_{p-1}\Longrightarrow
\end{equation*}%
then we will get 
\begin{equation*}
\lambda =\inf \left\{ \frac{\left\Vert \Delta u\right\Vert _{p-1}}{%
\left\Vert u\right\Vert _{p-1}}\left\vert \ u\in W^{2,p}\cap
W_{0}^{1,p}\right. \right\}
\end{equation*}%
since in the conditions of this section the operator $-\Delta $ is positive.
\end{remark}

\subsubsection{Conclusions}

\textit{To seek the eigenvalues of the nonlinear continuous operator in the
Banach space is necessary to choose the other operator in such a way that
the order of nonlinearity will be identical whit the order of nonlinearity
of the examined operator. If one uses the proposed approach then is possible
to find the other eigenvalues of this operator. The importance of the
knowledge of the eigenvalues of the operators in the study of the
bifurcation of solutions of nonlinear equations shows many articles
dedicated to studying the bifurcation of the solutions of nonlinear
equations (see, e.g. Section 3 and also articles \cite{12, 16, 33}, etc.). }

\bigskip 

\end{document}